\newtheorem{theorem}{Theorem}[section]
\newtheorem{corollary}[theorem]{Corollary}
\newtheorem{conjecture}[theorem]{Conjecture}
\newtheorem{lemma}[theorem]{Lemma}
\newtheorem{proposition}[theorem]{Proposition}
\newtheorem{axiom}{Axiom}[section]
\newtheorem{remark}{Remark}[section]
\newtheorem{example}{Example}[section]
\newtheorem{exercise}{Exercise}[section]
\newtheorem{definition}{Definition}[section]
\chardef\@x10\chardef\@xv60
\def\tcitime{
\def\@time{%
  \@minute\time\@hour\@minute\divide\@hour\@xv
  \ifnum\@hour<\@x 0\fi\the\@hour:%
  \multiply\@hour\@xv\advance\@minute-\@hour
  \ifnum\@minute<\@x 0\fi\the\@minute
  }}%
\def\QCTOpt[#1]#2{%
  \def\QCTOptB{#1}
  \def\QCTOptA{#2}
}
\def\QCTNOpt#1{%
  \def\QCTOptA{#1}
  \let\QCTOptB\empty
}
\def\Qct{%
  \@ifnextchar[{%
    \QCTOpt}{\QCTNOpt}
}
\def\QCBOpt[#1]#2{%
  \def\QCBOptB{#1}
  \def\QCBOptA{#2}
}
\def\QCBNOpt#1{%
  \def\QCBOptA{#1}
  \let\QCBOptB\empty
}
\def\Qcb{%
  \@ifnextchar[{%
    \QCBOpt}{\QCBNOpt}
}
\def\PrepCapArgs{%
  \ifx\QCBOptA\empty
    \ifx\QCTOptA\empty
      {}%
    \else
      \ifx\QCTOptB\empty
        {\QCTOptA}%
      \else
        [\QCTOptB]{\QCTOptA}%
      \fi
    \fi
  \else
    \ifx\QCBOptA\empty
      {}%
    \else
      \ifx\QCBOptB\empty
        {\QCBOptA}%
      \else
        [\QCBOptB]{\QCBOptA}%
      \fi
    \fi
  \fi
}
\def\GRAPHICSPS#1{%
 \ifcase\GRAPHICSTYPE%\GRAPHICSTYPE=0
   \special{ps: #1}%
 \or%\GRAPHICSTYPE=1
   \special{language "PS", include "#1"}%
%%%\or%\GRAPHICSTYPE=2
%%%  #1%
 \fi
}%
\def\graffile#1#2#3#4{%
%%% \ifnum\GRAPHICSTYPE=\tw@
%%%  %Following if using psfig
%%%  \@ifundefined{psfig}{\input psfig.tex}{}%
%%%  \psfig{file=#1, height=#3, width=#2}%
%%% \else
  %Following for all others
  % JCS - added BOXTHEFRAME, see below
    \leavevmode
    \raise -#4 \BOXTHEFRAME{%
        \hbox to #2{\raise #3\hbox to #2{\null #1\hfil}}}%
}%
\def\draftbox#1#2#3#4{%
 \leavevmode\raise -#4 \hbox{%
  \frame{\rlap{\protect\tiny #1}\hbox to #2%
   {\vrule height#3 width\z@ depth\z@\hfil}%
  }%
 }%
}%
\newif\ifwasdraft
\def\GRAPHIC#1#2#3#4#5{%
 \ifnum\draft=\@ne\draftbox{#2}{#3}{#4}{#5}%
  \else\graffile{#1}{#3}{#4}{#5}%
  \fi
 }%
\def\addtoLaTeXparams#1{%
    \edef\LaTeXparams{\LaTeXparams #1}}%
\newif\ifBoxFrame \BoxFramefalse
\newif\ifOverFrame \OverFramefalse
\newif\ifUnderFrame \UnderFramefalse
\def\BOXTHEFRAME#1{%
   \hbox{%
      \ifBoxFrame
         \frame{#1}%
      \else
         {#1}%
      \fi
   }%
}
\def\doFRAMEparams#1{\BoxFramefalse\OverFramefalse\UnderFramefalse\readFRAMEparams#1\end}%
\def\readFRAMEparams#1{%
 \ifx#1\end%
  \let\next=\relax
  \else
  \ifx#1i\dispkind=\z@\fi
  \ifx#1d\dispkind=\@ne\fi
  \ifx#1f\dispkind=\tw@\fi
  \ifx#1t\addtoLaTeXparams{t}\fi
  \ifx#1b\addtoLaTeXparams{b}\fi
  \ifx#1p\addtoLaTeXparams{p}\fi
  \ifx#1h\addtoLaTeXparams{h}\fi
  \ifx#1X\BoxFrametrue\fi
  \ifx#1O\OverFrametrue\fi
  \ifx#1U\UnderFrametrue\fi
  \ifx#1w
    \ifnum\draft=1\wasdrafttrue\else\wasdraftfalse\fi
    \draft=\@ne
  \fi
  \let\next=\readFRAMEparams
  \fi
 \next
 }%
\def\IFRAME#1#2#3#4#5#6{%
      \bgroup
      \let\QCTOptA\empty
      \let\QCTOptB\empty
      \let\QCBOptA\empty
      \let\QCBOptB\empty
      #6%
      \parindent=0pt%
      \leftskip=0pt
      \rightskip=0pt
      \setbox0 = \hbox{\QCBOptA}%
      \@tempdima = #1\relax
      \ifOverFrame
          % Do this later
          \typeout{This is not implemented yet}%
          \show\HELP
      \else
         \ifdim\wd0>\@tempdima
            \advance\@tempdima by \@tempdima
            \ifdim\wd0 >\@tempdima
               \textwidth=\@tempdima
               \setbox1 =\vbox{%
                  \noindent\hbox to \@tempdima{\hfill\GRAPHIC{#5}{#4}{#1}{#2}{#3}\hfill}\\%
                  \noindent\hbox to \@tempdima{\parbox[b]{\@tempdima}{\QCBOptA}}%
               }%
               \wd1=\@tempdima
            \else
               \textwidth=\wd0
               \setbox1 =\vbox{%
                 \noindent\hbox to \wd0{\hfill\GRAPHIC{#5}{#4}{#1}{#2}{#3}\hfill}\\%
                 \noindent\hbox{\QCBOptA}%
               }%
               \wd1=\wd0
            \fi
         \else
            %\show\BBB
            \ifdim\wd0>0pt
              \hsize=\@tempdima
              \setbox1 =\vbox{%
                \unskip\GRAPHIC{#5}{#4}{#1}{#2}{0pt}%
                \break
                \unskip\hbox to \@tempdima{\hfill \QCBOptA\hfill}%
              }%
              \wd1=\@tempdima
           \else
              \hsize=\@tempdima
              \setbox1 =\vbox{%
                \unskip\GRAPHIC{#5}{#4}{#1}{#2}{0pt}%
              }%
              \wd1=\@tempdima
           \fi
         \fi
         \@tempdimb=\ht1
         \advance\@tempdimb by \dp1
         \advance\@tempdimb by -#2%
         \advance\@tempdimb by #3%
         \leavevmode
         \raise -\@tempdimb \hbox{\box1}%
      \fi
      \egroup%
}%
\def\DFRAME#1#2#3#4#5{%
 \begin{center}
     \let\QCTOptA\empty
     \let\QCTOptB\empty
     \let\QCBOptA\empty
     \let\QCBOptB\empty
     \ifOverFrame 
        #5\QCTOptA\par
     \fi
     \GRAPHIC{#4}{#3}{#1}{#2}{\z@}
     \ifUnderFrame 
        \nobreak\par #5\QCBOptA
     \fi
 \end{center}%
 }%
\def\FFRAME#1#2#3#4#5#6#7{%
 \begin{figure}[#1]%
  \let\QCTOptA\empty
  \let\QCTOptB\empty
  \let\QCBOptA\empty
  \let\QCBOptB\empty
  \ifOverFrame
    #4
    \ifx\QCTOptA\empty
    \else
      \ifx\QCTOptB\empty
        \caption{\QCTOptA}%
      \else
        \caption[\QCTOptB]{\QCTOptA}%
      \fi
    \fi
    \ifUnderFrame\else
      \label{#5}%
    \fi
  \else
    \UnderFrametrue%
  \fi
  \begin{center}\GRAPHIC{#7}{#6}{#2}{#3}{\z@}\end{center}%
  \ifUnderFrame
    #4
    \ifx\QCBOptA\empty
      \caption{}%
    \else
      \ifx\QCBOptB\empty
        \caption{\QCBOptA}%
      \else
        \caption[\QCBOptB]{\QCBOptA}%
      \fi
    \fi
    \label{#5}%
  \fi
  \end{figure}%
 }%
\def\makeactives{
  \catcode`\"=\active
  \catcode`\;=\active
  \catcode`\:=\active
  \catcode`\'=\active
  \catcode`\~=\active
}
   \gdef\activesoff{%
      \def"{\string"}
      \def;{\string;}
      \def:{\string:}
      \def'{\string'}
      \def~{\string~}
      %\bbl@deactivate{"}%
      %\bbl@deactivate{;}%
      %\bbl@deactivate{:}%
      %\bbl@deactivate{'}%
    }
\def\FRAME#1#2#3#4#5#6#7#8{%
 \bgroup
 \@ifundefined{bbl@deactivate}{}{\activesoff}
 \ifnum\draft=\@ne
   \wasdrafttrue
 \else
   \wasdraftfalse%
 \fi
 \def\LaTeXparams{}%
 \dispkind=\z@
 \def\LaTeXparams{}%
 \doFRAMEparams{#1}%
 \ifnum\dispkind=\z@\IFRAME{#2}{#3}{#4}{#7}{#8}{#5}\else
  \ifnum\dispkind=\@ne\DFRAME{#2}{#3}{#7}{#8}{#5}\else
   \ifnum\dispkind=\tw@
    \edef\@tempa{\noexpand\FFRAME{\LaTeXparams}}%
    \@tempa{#2}{#3}{#5}{#6}{#7}{#8}%
    \fi
   \fi
  \fi
  \ifwasdraft\draft=1\else\draft=0\fi{}%
  \egroup
 }%
\def\TEXUX#1{"texux"}
\long\def\QQQ#1#2{%
     \long\expandafter\def\csname#1\endcsname{#2}}%
\long\def\QQA#1#2{}%
\def\QTR#1#2{{\csname#1\endcsname #2}}%(gp) Is this the best?
\def\EXPAND#1[#2]#3{}%
\def\NOEXPAND#1[#2]#3{}%
\def\LaTeXparent#1{}%
\def\ChildStyles#1{}%
\def\ChildDefaults#1{}%
\def\QTagDef#1#2#3{}%
\def\QQfnmark#1{\footnotemark}
\def\makeatletter\input gnuindex.sty\makeatother\makeindex{\makeatletter\input gnuindex.sty\makeatother\makeindex}%	
\def\initial#1{\bigbreak{\raggedright\large\bf #1}\kern 2\p@\penalty3000}}%
 \def\abstract{%
  \if@twocolumn
   \section*{Abstract (Not appropriate in this style!)}%
   \else \small 
   \begin{center}{\bf Abstract\vspace{-.5em}\vspace{\z@}}\end{center}%
   \quotation 
   \fi
  }%
   \def\registered{\relax\ifmmode{}\r@gistered
                    \else$\m@th\r@gistered$\fi}%
 \def\r@gistered{^{\ooalign
  {\hfil\raise.07ex\hbox{$\scriptstyle\rm\text{R}$}\hfil\crcr
  \mathhexbox20D}}}}{}%
\def\TEXTsymbol#1{\mbox{$#1$}}%
\newdimen\theight
\def\Column{%
 \vadjust{\setbox\z@=\hbox{\scriptsize\quad\quad tcol}%
  \theight=\ht\z@\advance\theight by \dp\z@\advance\theight by \lineskip
  \kern -\theight \vbox to \theight{%
   \rightline{\rlap{\box\z@}}%
   \vss
   }%
  }%
 }%
\def\qed{%
 \ifhmode\unskip\nobreak\fi\ifmmode\ifinner\else\hskip5\p@\fi\fi
 \hbox{\hskip5\p@\vrule width4\p@ height6\p@ depth1.5\p@\hskip\p@}%
 }%
\def\miss{\hbox{\vrule height2\p@ width 2\p@ depth\z@}}%
\def\tcol#1{{\baselineskip=6\p@ \vcenter{#1}} \Column}  %
\def\newfmtname{LaTeX2e}
\def\chkcompat{%
   \if@compatibility
   \else
     \usepackage{latexsym}
   \fi
}
  \DeclareOldFontCommand{\rm}{\normalfont\rmfamily}{\mathrm}
  \DeclareOldFontCommand{\sf}{\normalfont\sffamily}{\mathsf}
  \DeclareOldFontCommand{\tt}{\normalfont\ttfamily}{\mathtt}
  \DeclareOldFontCommand{\bf}{\normalfont\bfseries}{\mathbf}
  \DeclareOldFontCommand{\it}{\normalfont\itshape}{\mathit}
  \DeclareOldFontCommand{\sl}{\normalfont\slshape}{\@nomath\sl}
  \DeclareOldFontCommand{\sc}{\normalfont\scshape}{\@nomath\sc}
\def\alpha{{\Greekmath 010B}}%
\def\beta{{\Greekmath 010C}}%
\def\gamma{{\Greekmath 010D}}%
\def\delta{{\Greekmath 010E}}%
\def\epsilon{{\Greekmath 010F}}%
\def\zeta{{\Greekmath 0110}}%
\def\eta{{\Greekmath 0111}}%
\def\theta{{\Greekmath 0112}}%
\def\iota{{\Greekmath 0113}}%
\def\kappa{{\Greekmath 0114}}%
\def\lambda{{\Greekmath 0115}}%
\def\mu{{\Greekmath 0116}}%
\def\nu{{\Greekmath 0117}}%
\def\xi{{\Greekmath 0118}}%
\def\pi{{\Greekmath 0119}}%
\def\rho{{\Greekmath 011A}}%
\def\sigma{{\Greekmath 011B}}%
\def\tau{{\Greekmath 011C}}%
\def\upsilon{{\Greekmath 011D}}%
\def\phi{{\Greekmath 011E}}%
\def\chi{{\Greekmath 011F}}%
\def\psi{{\Greekmath 0120}}%
\def\omega{{\Greekmath 0121}}%
\def\varepsilon{{\Greekmath 0122}}%
\def\vartheta{{\Greekmath 0123}}%
\def\varpi{{\Greekmath 0124}}%
\def\varrho{{\Greekmath 0125}}%
\def\varsigma{{\Greekmath 0126}}%
\def\varphi{{\Greekmath 0127}}%
\def\nabla{{\Greekmath 0272}}
\def\FindBoldGroup{%
   {\setbox0=\hbox{$\mathbf{x\global\edef\theboldgroup{\the\mathgroup}}$}}%
}
\def\Greekmath#1#2#3#4{%
    \if@compatibility
        \ifnum\mathgroup=\symbold
           \mathchoice{\mbox{\boldmath$\displaystyle\mathchar"#1#2#3#4$}}%
                      {\mbox{\boldmath$\textstyle\mathchar"#1#2#3#4$}}%
                      {\mbox{\boldmath$\scriptstyle\mathchar"#1#2#3#4$}}%
                      {\mbox{\boldmath$\scriptscriptstyle\mathchar"#1#2#3#4$}}%
        \else
           \mathchar"#1#2#3#4% 
        \fi 
    \else 
        \FindBoldGroup
        \ifnum\mathgroup=\theboldgroup % For 2e
           \mathchoice{\mbox{\boldmath$\displaystyle\mathchar"#1#2#3#4$}}%
                      {\mbox{\boldmath$\textstyle\mathchar"#1#2#3#4$}}%
                      {\mbox{\boldmath$\scriptstyle\mathchar"#1#2#3#4$}}%
                      {\mbox{\boldmath$\scriptscriptstyle\mathchar"#1#2#3#4$}}%
        \else
           \mathchar"#1#2#3#4% 
        \fi     	    
	  \fi}
\newif\ifGreekBold  \GreekBoldfalse
\let\SAVEPBF=\pbf
\def\pbf{\GreekBoldtrue\SAVEPBF}%
  \newcounter{equationnumber}  
  \def\mathletters{%
     \addtocounter{equation}{1}
     \edef\@currentlabel{\theequation}%
     \setcounter{equationnumber}{\c@equation}
     \setcounter{equation}{0}%
     \edef\theequation{\@currentlabel\noexpand\alph{equation}}%
  }
    \def\BibTeX{{\rm B\kern-.05em{\sc i\kern-.025em b}\kern-.08em
                 T\kern-.1667em\lower.7ex\hbox{E}\kern-.125emX}}}{}%
\def\AmS{{\protect\usefont{OMS}{cmsy}{m}{n}%
                A\kern-.1667em\lower.5ex\hbox{M}\kern-.125emS}}}{}%
\let\DOTSI\relax
\def\eat@#1{}%
\def\RIfM@{\relax\ifmmode}%
\def\FN@{\futurelet\next}%
\def\iint{\DOTSI\intno@\tw@\FN@\ints@}%
\def\iiint{\DOTSI\intno@\thr@@\FN@\ints@}%
\def\iiiint{\DOTSI\intno@4 \FN@\ints@}%
\def\idotsint{\DOTSI\intno@\z@\FN@\ints@}%
\def\ints@{\findlimits@\ints@@}%
\newif\iflimtoken@
\newif\iflimits@
\def\findlimits@{\limtoken@true\ifx\next\limits\limits@true
 \else\ifx\next\nolimits\limits@false\else
 \limtoken@false\ifx\ilimits@\nolimits\limits@false\else
 \ifinner\limits@false\else\limits@true\fi\fi\fi\fi}%
\def\multint@{\int\ifnum\intno@=\z@\intdots@                          %1
 \else\intkern@\fi                                                    %2
 \ifnum\intno@>\tw@\int\intkern@\fi                                   %3
 \ifnum\intno@>\thr@@\int\intkern@\fi                                 %4
 \int}%                                                               %5
\def\multintlimits@{\intop\ifnum\intno@=\z@\intdots@\else\intkern@\fi
 \ifnum\intno@>\tw@\intop\intkern@\fi
 \ifnum\intno@>\thr@@\intop\intkern@\fi\intop}%
\def\intic@{%
    \mathchoice{\hskip.5em}{\hskip.4em}{\hskip.4em}{\hskip.4em}}%
\def\negintic@{\mathchoice
 {\hskip-.5em}{\hskip-.4em}{\hskip-.4em}{\hskip-.4em}}%
\def\ints@@{\iflimtoken@                                              %1
 \def\ints@@@{\iflimits@\negintic@
   \mathop{\intic@\multintlimits@}\limits                             %2
  \else\multint@\nolimits\fi                                          %3
  \eat@}%                                                             %4
 \else                                                                %5
 \def\ints@@@{\iflimits@\negintic@
  \mathop{\intic@\multintlimits@}\limits\else
  \multint@\nolimits\fi}\fi\ints@@@}%
\def\intkern@{\mathchoice{\!\!\!}{\!\!}{\!\!}{\!\!}}%
\def\plaincdots@{\mathinner{\cdotp\cdotp\cdotp}}%
\def\intdots@{\mathchoice{\plaincdots@}%
 {{\cdotp}\mkern1.5mu{\cdotp}\mkern1.5mu{\cdotp}}%
 {{\cdotp}\mkern1mu{\cdotp}\mkern1mu{\cdotp}}%
 {{\cdotp}\mkern1mu{\cdotp}\mkern1mu{\cdotp}}}%
\def\RIfM@{\relax\protect\ifmmode}
\def\text{\RIfM@\expandafter\text@\else\expandafter\mbox\fi}
\let\nfss@text\text
\def\text@#1{\mathchoice
   {\textdef@\displaystyle\f@size{#1}}%
   {\textdef@\textstyle\tf@size{\firstchoice@false #1}}%
   {\textdef@\textstyle\sf@size{\firstchoice@false #1}}%
   {\textdef@\textstyle \ssf@size{\firstchoice@false #1}}%
   \glb@settings}
\def\textdef@#1#2#3{\hbox{{%
                    \everymath{#1}%
                    \let\f@size#2\selectfont
                    #3}}}
\newif\iffirstchoice@
\def\Let@{\relax\iffalse{\fi\let\\=\cr\iffalse}\fi}%
\def\vspace@{\def\vspace##1{\crcr\noalign{\vskip##1\relax}}}%
\def\multilimits@{\bgroup\vspace@\Let@
 \baselineskip\fontdimen10 \scriptfont\tw@
 \advance\baselineskip\fontdimen12 \scriptfont\tw@
 \lineskip\thr@@\fontdimen8 \scriptfont\thr@@
 \lineskiplimit\lineskip
 \vbox\bgroup\ialign\bgroup\hfil$\m@th\scriptstyle{##}$\hfil\crcr}%
\def\Sb{_\multilimits@}%
\def\endSb{\crcr\egroup\egroup\egroup}%
\def\Sp{^\multilimits@}%
\newdimen\ex@
\def\rightarrowfill@#1{$#1\m@th\mathord-\mkern-6mu\cleaders
 \hbox{$#1\mkern-2mu\mathord-\mkern-2mu$}\hfill
 \mkern-6mu\mathord\rightarrow$}%
\def\leftarrowfill@#1{$#1\m@th\mathord\leftarrow\mkern-6mu\cleaders
 \hbox{$#1\mkern-2mu\mathord-\mkern-2mu$}\hfill\mkern-6mu\mathord-$}%
\def\leftrightarrowfill@#1{$#1\m@th\mathord\leftarrow
\mkern-6mu\cleaders
 \hbox{$#1\mkern-2mu\mathord-\mkern-2mu$}\hfill
 \mkern-6mu\mathord\rightarrow$}%
\def\overrightarrow{\mathpalette\overrightarrow@}%
\def\overrightarrow@#1#2{\vbox{\ialign{##\crcr\rightarrowfill@#1\crcr
 \noalign{\kern-\ex@\nointerlineskip}$\m@th\hfil#1#2\hfil$\crcr}}}%
\def\overleftarrow{\mathpalette\overleftarrow@}%
\def\overleftarrow@#1#2{\vbox{\ialign{##\crcr\leftarrowfill@#1\crcr
 \noalign{\kern-\ex@\nointerlineskip}$\m@th\hfil#1#2\hfil$\crcr}}}%
\def\overleftrightarrow{\mathpalette\overleftrightarrow@}%
\def\overleftrightarrow@#1#2{\vbox{\ialign{##\crcr
   \leftrightarrowfill@#1\crcr
 \noalign{\kern-\ex@\nointerlineskip}$\m@th\hfil#1#2\hfil$\crcr}}}%
\def\underrightarrow{\mathpalette\underrightarrow@}%
\def\underrightarrow@#1#2{\vtop{\ialign{##\crcr$\m@th\hfil#1#2\hfil
  $\crcr\noalign{\nointerlineskip}\rightarrowfill@#1\crcr}}}%
\def\underleftarrow{\mathpalette\underleftarrow@}%
\def\underleftarrow@#1#2{\vtop{\ialign{##\crcr$\m@th\hfil#1#2\hfil
  $\crcr\noalign{\nointerlineskip}\leftarrowfill@#1\crcr}}}%
\def\underleftrightarrow{\mathpalette\underleftrightarrow@}%
\def\underleftrightarrow@#1#2{\vtop{\ialign{##\crcr$\m@th
  \hfil#1#2\hfil$\crcr
 \noalign{\nointerlineskip}\leftrightarrowfill@#1\crcr}}}%
\def\qopnamewl@#1{\mathop{\operator@font#1}\nlimits@}
\let\nlimits@\displaylimits
\def\setboxz@h{\setbox\z@\hbox}
\def\varlim@#1#2{\mathop{\vtop{\ialign{##\crcr
 \hfil$#1\m@th\operator@font lim$\hfil\crcr
 \noalign{\nointerlineskip}#2#1\crcr
 \noalign{\nointerlineskip\kern-\ex@}\crcr}}}}
 \def\rightarrowfill@#1{\m@th\setboxz@h{$#1-$}\ht\z@\z@
  $#1\copy\z@\mkern-6mu\cleaders
  \hbox{$#1\mkern-2mu\box\z@\mkern-2mu$}\hfill
  \mkern-6mu\mathord\rightarrow$}
\def\leftarrowfill@#1{\m@th\setboxz@h{$#1-$}\ht\z@\z@
  $#1\mathord\leftarrow\mkern-6mu\cleaders
  \hbox{$#1\mkern-2mu\copy\z@\mkern-2mu$}\hfill
  \mkern-6mu\box\z@$}
\def\projlim{\qopnamewl@{proj\,lim}}
\def\injlim{\qopnamewl@{inj\,lim}}
\def\varinjlim{\mathpalette\varlim@\rightarrowfill@}
\def\varprojlim{\mathpalette\varlim@\leftarrowfill@}
\def\varliminf{\mathpalette\varliminf@{}}
\def\varliminf@#1{\mathop{\underline{\vrule\@depth.2\ex@\@width\z@
   \hbox{$#1\m@th\operator@font lim$}}}}
\def\varlimsup{\mathpalette\varlimsup@{}}
\def\varlimsup@#1{\mathop{\overline
  {\hbox{$#1\m@th\operator@font lim$}}}}
\def\binom#1#2{{#1 \choose #2}}%
\def\align{\@verbatim \frenchspacing\@vobeyspaces \@alignverbatim
You are using the "align" environment in a style in which it is not defined.}
\let\csname endalign*\endcsname =\endtrivlist
\def\alignat{\@verbatim \frenchspacing\@vobeyspaces \@alignatverbatim
You are using the "alignat" environment in a style in which it is not defined.}
\let\csname endalignat*\endcsname =\endtrivlist
\def\xalignat{\@verbatim \frenchspacing\@vobeyspaces \@xalignatverbatim
You are using the "xalignat" environment in a style in which it is not defined.}
\let\csname endxalignat*\endcsname =\endtrivlist
\def\gather{\@verbatim \frenchspacing\@vobeyspaces \@gatherverbatim
You are using the "gather" environment in a style in which it is not defined.}
\let\csname endgather*\endcsname =\endtrivlist
\def\multiline{\@verbatim \frenchspacing\@vobeyspaces \@multilineverbatim
You are using the "multiline" environment in a style in which it is not defined.}
\let\csname endmultiline*\endcsname =\endtrivlist
\def\arrax{\@verbatim \frenchspacing\@vobeyspaces \@arraxverbatim
You are using a type of "array" construct that is only allowed in AmS-LaTeX.}
\def\tabulax{\@verbatim \frenchspacing\@vobeyspaces \@tabulaxverbatim
You are using a type of "tabular" construct that is only allowed in AmS-LaTeX.}
\let\csname endarrax*\endcsname =\endtrivlist
\let\csname endtabulax*\endcsname =\endtrivlist
\def\@@eqncr{\let\@tempa\relax
    \ifcase\@eqcnt \def\@tempa{& & &}\or \def\@tempa{& &}%
      \else \def\@tempa{&}\fi
     \@tempa
     \if@eqnsw
        \iftag@
           \@taggnum
        \else
           \@eqnnum\stepcounter{equation}%
        \fi
     \fi
     \global\tag@false
     \global\@eqnswtrue
     \global\@eqcnt\z@\cr}
 \def\endequation{%
     \ifmmode\ifinner % FLEQN hack
      \iftag@
        \addtocounter{equation}{-1} % undo the increment made in the begin part
        $\hfil
           \displaywidth\linewidth\@taggnum\egroup \endtrivlist
        \global\tag@false
        \global\@ignoretrue   
      \else
        $\hfil
           \displaywidth\linewidth\@eqnnum\egroup \endtrivlist
        \global\tag@false
        \global\@ignoretrue 
      \fi
     \else   
      \iftag@
        \addtocounter{equation}{-1} % undo the increment made in the begin part
        \eqno \hbox{\@taggnum}
        \global\tag@false%
        $$\global\@ignoretrue
      \else
        \eqno \hbox{\@eqnnum}% $$ BRACE MATCHING HACK
        $$\global\@ignoretrue
      \fi
     \fi\fi
 } 
 \newif\iftag@ \tag@false
 \def\tag{\@ifnextchar*{\@tagstar}{\@tag}}
 \def\@tag#1{%
     \global\tag@true
     \global\def\@taggnum{(#1)}}
 \def\@tagstar*#1{%
     \global\tag@true
     \global\def\@taggnum{#1}%  
}
\begin{document}
\title[Numerical Criteria]{Criteria for very ampleness of rank two vector bundles over ruled surfaces}
\author{Alberto Alzati}
\address{Dipartimento di Matematica Univ. di Milano\\
via C.\ Saldini 50 20133-Milano (Italy)}
\email{alberto.alzati@unimi.it}
\thanks{This work is within the framework of the national research project ''Geomety
of Algebraic Varieties'' Cofin 2006 of MIUR}
\author{Gian Mario Besana}
\address{College of Computing and Digital Media, De Paul University - 243 S Wabash
Chicago IL 60604 USA}
\email{gbesana@cs.depaul.edu}
\date{february, 12 2009}
\subjclass{Primary 14E05, 14J30}
\keywords{vector bundles, very ampleness, ruled surfaces.}
\maketitle

\begin{abstract}
Very ampleness criteria for rank $2$ vector bundles over smooth, ruled
surfaces over rational and elliptic curves are given. The criteria are then
used to settle open existence questions for some special threefolds of low
degree.
\end{abstract}

\section{Introduction.}

A vector bundle $\mathcal{E}$ over a smooth algebraic variety $Y$ is said to
be very ample if the tautological line bundle $\mathcal{O}_{\Bbb{P}(\mathcal{%
E})}(1)$ is very ample on the projectivized bundle $\Bbb{P}(\mathcal{E}).$
Very ampleness of $\mathcal{E}$ is therefore equivalent to the existence of
a projective smooth manifold $X=\Bbb{P}(\mathcal{E})$ embedded as a linear
scroll on $Y.$

Although it is in general impossible to give a numerical characterization of
very ampleness, one can try to find sufficient numerical conditions to
guarantee it when the Picard group of $Y$ is particularly simple.

In this paper some classical ideas are revisited in order to give some very
ampleness criteria for rank $2$ vector bundles $\mathcal{E}$ over smooth,
ruled surfaces on rational and elliptic curves.

In Section 3, classical ideas on obtaining very ampleness criteria by
lifting of sections from appropriately chosen divisors are revisited in our
context. They are applied to obtain the very ampleness of a family of rank $%
2 $ vector bundles over $\Bbb{P}^{2}$, see Corollary \ref{bundledeg11g6onP2}
, and, in particular, an existence result for $3$-dimensional scrolls over $%
\Bbb{P}^{2}$, of degree $11$ and genus $6,$ left as an open question in \cite
{bb1}, see Remark \ref{edeg11g6onP2}. Section 4 presents a purely numerical
very ampleness criterion for rank $2$ vector bundles over rational ruled
surfaces, see Theorem \ref{Valma}, with an example of its application.
Section 5 deals with the case of ruled surfaces over elliptic curves.
Section 6 contains a very ampleness criterion criterion for a very special
class of vector bundles $\mathcal{E}$ on $\mathbf{F}_{1}$, the Hizebruch
surface with invariant $e=1.$ The criterion is partially based upon a new
observation that relates the very ampleness of $\mathcal{E}$ with whether
points in the zero locus of a generic section of $\mathcal{E}$ are in
general position on $Y.$ Results from section 6 are applied to establish
further existence and non existence results for threefolds scrolls over $%
\mathbf{F}_{1}$, of low degree, previously left as open problems in \cite
{fl2} and \cite{bb1}.

\textbf{Acknowledgements:} The authors wish to thank E. Arrondo for some
helpful conversations around the zero locus of generic sections of rank $2$
vector bundles over surfaces and the referee for the very careful revision
of the paper.

\section{Notation and background material}

The ground field is fixed to be $\Bbb{C},$ and $\Bbb{P}^{n}$ denotes the $n$
-dimensional complex projective space. The focus of this work is on rank-two
vector bundles over smooth surfaces. In this context we fix the following
notation:

\begin{itemize}
\item  $\mathcal{E}:$ a rank-$r$ vector bundle over a smooth variety $Y,$ $%
\dim {Y}\ge 2;$

\item  $c_{i}(\mathcal{E}):$ the $i$-th Chern class of $\mathcal{E};$

\item  $X=\Bbb{P}(\mathcal{E}):$ the projectivization of $\mathcal{E};$

\item  $\pi :\Bbb{P}(\mathcal{E})\to Y$ the natural projection onto the base;

\item  $T:$ the tautological line bundle of $X,$ i.e. ${}\mathcal{O}_{X}({T}%
)={}\mathcal{O}_{\Bbb{P}(\mathcal{E})}(1);$

\item  $|T|:$ linear system of effective divisors linearly equivalent to $T;$

\item  $\Gamma :$ projectivization of the restriction $\mathcal{E}_{|\gamma
},$ where $\gamma $ is a smooth curve on $Y;$

\item  $\mathbf{F}_{e}:$ rational ruled surface of invariant $e\geq 0,$ i.e.
$\Bbb{P}(\mathcal{O}_{\Bbb{P}^{1}}\oplus \mathcal{O}_{\Bbb{P}^{1}}(-e));$

\item  $\rho :$ $\mathbf{F}_{e}\rightarrow \Bbb{P}^{1}:$ the natural
projection onto the base;

\item  $C_{0},f:$ standard generators of $\text{\textrm{Num}}(\mathbf{F}%
_{e})\simeq \text{\textrm{Pic}}(\mathbf{F}_{e});$

\item  $F:$ projectivization of the restriction $\mathcal{E}_{|f}$ where $f$
is a fibre of $Y$ when $Y=\mathbf{F}_{e};$

\item  $(\sigma )_{0}:$ zero locus of the section $\sigma $ of a vector
bundle;

\item  $\equiv $ $:$ numerical equivalence of divisors;

\item  $K_{S}:$ canonical divisor of the smooth surface $S;$

\item  $X^{[t]}:$ the Hilbert scheme of zero-subschemes of $X$ of length $t.$

\item  $\Bbb{T}_{P}(M):$ the holomorphic tangent space to an analytic
manifold $M$ at a point $P.$
\end{itemize}

Cartier divisors on smooth projective varieties, their associated line
bundles and invertible sheaves of their holomorphic sections are used with
no distinction. Mostly additive notation is used for their group. Given a
divisor (line bundle) $D$ on a smooth projective variety, $|D|$ denotes the
complete linear system of effective divisors linearly equivalent
(associated) to $D.$ Given any subvariety $S$ in $X$ and a line bundle $L\in
\text{\textrm{Pic}}(X),$ we denote by ${L}_{\mid _{S}}$ the restriction of $%
L $ to $S,$ i.e. ${L}_{\mid _{S}}=L\otimes \mathcal{O}{}_{S}.$

Let $X$ and $T$ be as above. For any smooth surface $\Sigma $ contained in $%
X $ let $|T_{|\Sigma }|$ be the complete linear system associated to ${T}
_{|\Sigma },$ i.e. given by $H^{0}(T\otimes \mathcal{O}{}_{\Sigma }).$ Let $%
|T|_{|\Sigma }$ be the restriction of the linear system $|T|$ to $\Sigma ,$
i.e. given by the image of the restriction map $r:H^{0}(X,T)\to H^{0}(\Sigma
,{T}_{\mid _{\Sigma }}).$ Then $|T_{|\Sigma }|\supseteq |T|_{|\Sigma }$ and
equality holds if $h^{1}(X,T-\Sigma )=0.$

If $\Sigma $ is reducible as the union of two smooth surfaces $S_{1}\cup
S_{2},$ intersecting transversely only along a smooth (possibly reducible)
curve $C,$ it is: $\{(\sigma _{1},\sigma _{2})|$ $\sigma _{i}\in
|T_{|S_{i}}| $ $i=1,2,$ $\sigma _{1_{|C}}=\sigma _{2_{|C}}\}=|T_{|\Sigma
}|\subseteq |T_{|S_{1}}|\oplus |T_{|S_{2}}|=\{(\sigma _{1},\sigma
_{2})|\sigma _{i}\in |T_{|S_{i}}|$ $i=1,2\},$ while $|T|_{|\Sigma
}=\{(\sigma _{1},\sigma _{2})|\sigma _{i}\in |T_{|S_{i}}|$ $i=1,2$ and there
exists $\tau \in |T|$ such that $\tau _{|S_{i}}=\sigma _{i}$ $i=1,2\}.$

Let $\xi \in X^{[t]}.$ A subvariety $S$ is said to \textit{pass through} $%
\xi $ if and only if $S$ contains $\xi $ scheme theoretically. If $t=\mathrm{%
length}(\xi )=2$ and $\mathrm{Supp}(\xi )$ consists of two distinct points,
scheme theoretic inclusion is equivalent to ordinary inclusion. If $t=%
\mathrm{length}(\xi )=2,$ $\mathrm{Supp}(\xi )$ consists of one point $P,$
and $X$ and $S$ are smooth at $P$, let $\underline{q}\in \Bbb{T}_{P}(X)$
denote the tangent direction at $P$ specified by $\xi .$ Then scheme
theoretic inclusion is equivalent to $P\in S$ and $\underline{q}\in \Bbb{T}
_{P}(S).$

Let $L$ be a line bundle on a variety $X$ and let $\xi \in X^{[t]}.$ Let $%
V\subset H^{0}(L)$ be a subspace of sections and let $|V|$ be the associated
linear system. The expression $|V|$ \textit{separates} $\xi $ is used to
mean that the restriction map $V\to H^{0}({}\xi )$ is surjective. In this
language a line bundle $L$ is very ample if and only if the associated
complete linear system separates every $\xi \in X^{[2]}$ .

If $\xi \in X^{[t]}$ is a reduced, $0$-dimensional, scheme, we often
identify the scheme itself with its support. For example we refer to
''points of $\xi "$ to mean points of $\mathrm{Supp}(\xi )$.

\begin{lemma}
\label{nolines} Let $Y$ be a ruled surface of invariant $e,$ over a smooth
curve $C$ of genus $g.$ Let $x$ be an integer and assume the line bundle $%
C_{0}+xf$ is very ample. If $\ell \subset Y$ is a line on $Y$ in the
embedding given by $|C_{0}+xf|,$ then either $\ell =f$ or $g=0,$ $x=e+1,$
and $\ell =C_{0}.$
\end{lemma}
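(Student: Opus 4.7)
The plan is to write $\ell \equiv aC_{0}+bf$ numerically, so that the hypothesis $\ell\cdot(C_{0}+xf)=1$ reads $a(x-e)+b=1$. Since the very ample linear system $|C_{0}+xf|$ carries $\ell$ isomorphically onto a projective line, $\ell$ is automatically smooth and isomorphic to $\Bbb{P}^{1}$, a fact I will use crucially.

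First I would dispose of the case $a=0$: the equation then forces $b=1$, so $\ell$ is numerically equivalent to a fibre; being irreducible with $\ell\cdot f=0$, it is contracted by the ruling $Y\to C$ and is hence itself a fibre, giving $\ell=f$. Assuming instead $a\geq 1$, restricting the ruling to $\ell$ produces a non-constant morphism of smooth projective curves $\ell\to C$ of degree $a$; the standard inequality $g(C)\leq g(\ell)$ for such morphisms, combined with $g(\ell)=0$, forces $g=0$ and therefore $Y\cong\mathbf{F}_{e}$.

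To finish I would classify lines on $\mathbf{F}_{e}$. Very ampleness of $C_{0}+xf$ on $\mathbf{F}_{e}$ forces $x\geq e+1\geq 1$. If $\ell\equiv C_{0}$ then $a=1$, $b=0$, and the degree equation immediately yields $x=e+1$, matching the claim. Otherwise $\ell$ is an irreducible curve on $\mathbf{F}_{e}$ distinct from $C_{0}$ and from every fibre, so the classical bound on the effective cone yields $b\geq ae$; substituting into $a(x-e)+b=1$ gives $ax\leq 1$, which with $a\geq 1$ and $x\geq 1$ forces $a=x=1$, hence $e=0$, $b=0$ and $\ell\equiv C_{0}$, reducing to the previous case.

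The only non-routine step is the reduction $a\geq 1\Rightarrow g=0$, which rests on two points used in tandem: that $\ell$ is genuinely smooth rational (guaranteed by the embedding, since the image is a projective line), and that a dominant morphism of smooth projective curves does not decrease the source genus. Everything after that is a short numerical check on $\mathbf{F}_{e}$ using the standard bound for irreducible curves together with the very ampleness inequality $x\geq e+1$.
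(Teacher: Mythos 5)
Your proof is correct, and at the crucial step it takes a genuinely different route from the paper's. The paper argues purely numerically: it combines the degree equation $1=ax-ae+b$ with the ampleness inequalities for $C_{0}+xf$ (which must be split into the cases $e\geq 0$ and $e<0$, the latter occurring only for $g\geq 1$), the vanishing of the arithmetic genus of $\ell$ computed by adjunction, and Hartshorne's necessary conditions for irreducible divisors on ruled surfaces of arbitrary genus, and checks that these are jointly incompatible except in the two stated cases. You instead exploit the geometry of the embedding: since $\ell$ is carried isomorphically onto a line it is smooth and rational, and when $a=\ell\cdot f\geq 1$ the ruling restricts to a finite degree-$a$ morphism $\ell\to C$ of smooth projective curves, so $g(C)\leq g(\ell)=0$. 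This kills all positive-genus bases in one stroke and lets you bypass the $e<0$ branch of the effective-cone and ampleness conditions entirely; the residual analysis on $\mathbf{F}_{e}$ (the bound $b\geq ae$ giving $ax\leq 1$, hence $a=x=1$, $e=b=0$) is the same kind of numerical check the paper performs, only in the simpler rational setting. One small logical point: in your final branch you assume $\ell$ is a curve distinct from $C_{0}$ and from the fibres and deduce $\ell\equiv C_{0}$ with $e=0$ and $x=1$; rather than ``reducing to the previous case,'' this simply identifies $\ell$ as a member of the second ruling of $\mathbf{F}_{0}=\Bbb{P}^{1}\times \Bbb{P}^{1}$, which is precisely the $g=0$, $x=e+1$, $\ell\equiv C_{0}$ alternative of the statement (where, as throughout the paper, $C_{0}$ denotes a numerical class). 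Both arguments are sound; yours trades the multi-case numerical verification for a cleaner geometric reduction to genus zero.
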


\begin{proof}
As $C_{0}+xf$ is very ample, it is in particular ample and therefore, (see
\cite[Corollary V.2.18, Proposition V.2.20, Proposition V.2.21]{h}) it must
be
\begin{equation}
x>\begin{cases} e \text{ if } e \ge 0\\ \frac{e}{2}\text{ if } e < 0.\\
\end{cases}  \label{ampleness}
\end{equation}
For an irreducible divisor $\ell \in |aC_{0}+bf|$ to be a line in the
embedding given by $C_{0}+xf$ it must be $(C_{0}+xf)(aC_{0}+bf)=1$ i.e.
\begin{equation}
1=ax-ae+b.  \label{bealine}
\end{equation}
By considering that the arithmetic genus of $\ell $ must be zero, one can
easily check that the necessary conditions for $\ell $ to be an irreducible
divisor, contained in
\cite[Corollary 2.18, Proposition 2.20, Proposition 2.21]{h}, are
incompatible with (\ref{ampleness}), (\ref{bealine}), and the very ampleness
of $C_{0}+xf$ unless $\ell $ is as in the statement.
\end{proof}

\section{Classical ideas}

The following notation will be fixed throughout the paper. Let $Y$ be a
smooth algebraic surface. Let $\mathcal{E}$ be a rank $2$ vector bundle over
$Y$, and let $X=\Bbb{P}(\mathcal{E})$. Let $\pi :X\to Y$ be the natural
projection and let $T$ be the tautological line bundle$.$ Let $A\in \mathrm{%
\ Pic}(Y)$ and let $D=T+\pi ^{*}A\in \mathrm{Pic}(X).$ The first Proposition
in this section is a simple adaptation to our context of a classical lifting
of sections approach to prove very ampleness.

\begin{proposition}
\label{uno}With the notation fixed in this section, let $D_{\epsilon
}=\epsilon T+\pi ^{*}(A),$ where $\epsilon =0,1$ and assume:

\begin{itemize}
\item[a)]  for all $\xi \in X^{[2]}$ there exists a smooth, irreducible
element $S\in |D_{\epsilon }|,$ passing through $\xi ;$

\item[b)]  $h^{1}(X,(1-\epsilon )T-\pi ^{*}(A))=0;$

\item[c)]  $T_{|S}$ is very ample on $S.$
\end{itemize}

Then $T$ is very ample on $X$.
\end{proposition}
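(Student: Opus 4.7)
The plan is the classical lifting-of-sections argument: for each length-two subscheme $\xi \in X^{[2]}$, produce a divisor $S$ through $\xi$ on which $T$ restricts to a very ample line bundle, and then show that sections of $T_{|S}$ separating $\xi$ on $S$ are restrictions of global sections of $T$ on $X$.

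More precisely, I would proceed as follows. Fix an arbitrary $\xi \in X^{[2]}$. By hypothesis (a), choose a smooth, irreducible $S \in |D_\epsilon|$ passing through $\xi$ scheme-theoretically. Consider the standard restriction short exact sequence of sheaves on $X$,
\begin{equation*}
0 \to \mathcal{O}_X(T - S) \to \mathcal{O}_X(T) \to \mathcal{O}_S(T_{|S}) \to 0,
\end{equation*}
and observe that, since $S \equiv D_\epsilon = \epsilon T + \pi^{*}A$, we have $T - S \equiv (1-\epsilon)T - \pi^{*}A$. Taking cohomology and invoking hypothesis (b), the map
\begin{equation*}
H^0(X, T) \to H^0(S, T_{|S})
\end{equation*}
is surjective.

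Next, because $\xi$ is contained scheme-theoretically in $S$, there is a natural restriction $H^0(S, T_{|S}) \to H^0(\xi, T_{|\xi})$, and this is surjective by hypothesis (c), since very ampleness of $T_{|S}$ means that its complete linear system separates every length-two subscheme of $S$, in particular $\xi$. Composing with the surjection from the previous step, the map $H^0(X,T) \to H^0(\xi, T_{|\xi})$ is surjective, i.e.\ $|T|$ separates $\xi$. As $\xi \in X^{[2]}$ was arbitrary, $T$ is very ample.

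There is no serious obstacle in this argument; the only point that deserves care is the verification that scheme-theoretic containment of $\xi$ in $S$ (as provided by (a)) implies the surjectivity of the restriction $H^0(S, T_{|S}) \to H^0(\xi, T_{|\xi})$, which is exactly the content of the language introduced earlier in the section: $T_{|S}$ very ample on $S$ means precisely that $|T_{|S}|$ separates every length-two subscheme of $S$, including those supported at a single point with a prescribed tangent direction in $\mathbb{T}_P(S)$.
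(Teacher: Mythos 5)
Your proposal is correct and follows exactly the paper's own argument: choose $S$ through $\xi$ via (a), separate $\xi$ on $S$ via (c), and lift the separating sections using the cohomology sequence of $0\to T-D_{\epsilon}\to T\to T_{|S}\to 0$ together with the vanishing in (b). Nothing is missing; the identification $T-S\equiv(1-\epsilon)T-\pi^{*}A$ and the remark about scheme-theoretic containment are exactly the points the paper relies on.
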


\begin{proof}
Given any $\xi \in X^{[2]},$ assumption a) gives an element $S\in
|D_{\epsilon }|$ passing through it. By assumption c) we can separate $\xi $
on $S$ by elements of $|T_{|S}|.$ Consider the sequence: $0\rightarrow
H^{0}(X,T-D_{\epsilon })\rightarrow H^{0}(X,T)\rightarrow
H^{0}(S,T_{|S})\rightarrow H^{1}(X,T-D_{\epsilon })=H^{1}(X,(1-\epsilon
)T-\pi ^{*}A)=0$. Assumption b) allows us to lift the separating sections on
$S$ to sections of $T$ separating $\xi $ on $X.$ Notice that when $\epsilon
=1$ assumption b) can be restated as $h^{1}(Y,-A)=0.$
\end{proof}

As an immediate application of Proposition \ref{uno}, when $\epsilon =0,$
one obtains the very ampleness of a family of rank-$2$ vector bundles over $%
\Bbb{P}^{2}.$

\begin{corollary}
\label{bundledeg11g6onP2} There exists very ample vector bundles $\mathcal{E}
$ of rank $2$ over $Y=\Bbb{P}^{2}$ given by non trivial extensions
\begin{equation*}
0\rightarrow \mathcal{O}_{\Bbb{P}^{2}}(1)\rightarrow \mathcal{E}\rightarrow
\mathcal{O}_{\Bbb{P}^{2}}(4)\otimes \mathcal{I}_{\eta }\rightarrow 0,
\end{equation*}
where $\eta \in \Bbb{P}{^{2}}^{[10]}$ consists of $10$ distinct points in
general position.
\end{corollary}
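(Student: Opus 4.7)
The plan is to apply Proposition~\ref{uno} with $\epsilon=0$ and $A=\mathcal{O}_{\Bbb{P}^{2}}(1)$, taking as the auxiliary surface $S=\pi^{-1}(\ell)$ the preimage of a line $\ell\subset \Bbb{P}^{2}$. A preliminary remark is needed to secure existence: for $10$ points of $\Bbb{P}^{2}$ in general position, no conic passes through any $9$ of them, so the Serre/Cayley--Bacharach correspondence guarantees that the relevant Ext group is nonzero and produces a locally free $\mathcal{E}$ from a nontrivial extension.

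For hypothesis (a) of Proposition~\ref{uno}, given any $\xi\in X^{[2]}$, the image $\pi(\xi)$ is a subscheme of $\Bbb{P}^{2}$ of length at most $2$, hence contained in some line $\ell$, and $S=\pi^{-1}(\ell)$ is then a smooth Hirzebruch surface containing $\xi$ scheme--theoretically (for a tangent direction $\underline{q}$ supported at $P$ one checks that $\underline{q}\in\Bbb{T}_{P}(S)=(d\pi)^{-1}(\Bbb{T}_{\pi(P)}\ell)$). For hypothesis (b), the projection formula and $h^{1}(\mathcal{O}_{\Bbb{P}^{2}})=h^{2}(\mathcal{O}_{\Bbb{P}^{2}})=0$ reduce the computation to $h^{1}(\mathcal{I}_{\eta}(3))=0$; since $h^{0}(\mathcal{O}_{\Bbb{P}^{2}}(3))=h^{0}(\mathcal{O}_{\eta})=10$ and $10$ general points impose independent conditions on cubics, this vanishing is immediate.

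The substantive step, and the main obstacle, is hypothesis (c): $T_{|S}$ must be very ample on $S=\Bbb{P}(\mathcal{E}_{|\ell})$ for \emph{every} line $\ell$, including those meeting $\eta$. The plan is to pin down the splitting type $\mathcal{E}_{|\ell}=\mathcal{O}_{\ell}(a)\oplus\mathcal{O}_{\ell}(b)$ directly. Local freeness of $\mathcal{E}$ forces the restricted sequence
\[
0\to \mathcal{O}_{\ell}(1)\to \mathcal{E}_{|\ell}\to (\mathcal{O}_{\Bbb{P}^{2}}(4)\otimes\mathcal{I}_{\eta})_{|\ell}\to 0
\]
to be exact, since any $\mathrm{Tor}$ term would inject into the torsion--free $\mathcal{O}_{\ell}(1)$ and hence vanish. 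Set $k=|\eta\cap\ell|$; general position gives $k\leq 2$. A standard Tor computation exhibits the right--hand term as an extension of $\mathcal{O}_{\ell}(4-k)$ by the length--$k$ torsion sheaf $\mathcal{O}_{\eta\cap\ell}$, which forces the saturation of $\mathcal{O}_{\ell}(1)$ in $\mathcal{E}_{|\ell}$ to be $\mathcal{O}_{\ell}(k+1)$. The resulting extension $0\to \mathcal{O}_{\ell}(k+1)\to \mathcal{E}_{|\ell}\to \mathcal{O}_{\ell}(4-k)\to 0$ splits automatically for $k\in\{1,2\}$ (vanishing $\mathrm{Ext}^{1}$ on $\Bbb{P}^{1}$) and, for $k=0$, leaves only the two possibilities $(a,b)=(1,4)$ or $(2,3)$. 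In every case $\mathcal{E}_{|\ell}\cong\mathcal{O}_{\ell}(a)\oplus\mathcal{O}_{\ell}(b)$ with $1\leq a\leq b$ and $a+b=5$, so $T_{|S}\equiv C_{0}+bf$ on $\mathbf{F}_{b-a}$ is very ample by the classical criterion \cite[Corollary V.2.18]{h}. Proposition~\ref{uno} then yields the very ampleness of $T$ on $X$, i.e.\ of $\mathcal{E}$.
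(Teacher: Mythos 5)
Your proposal is correct and takes essentially the same route as the paper's proof: Proposition \ref{uno} with $\epsilon =0$ and $A=\mathcal{O}_{\Bbb{P}^{2}}(1)$, hypothesis (a) via the line through $\pi (\xi )$, hypothesis (b) via $h^{1}(\mathcal{I}_{\eta }(3))=0$, and hypothesis (c) via the splitting type of $\mathcal{E}_{|\ell }$ (your saturation/Tor argument makes explicit what the paper's sequence (\ref{erestrict}) simply asserts). One small correction: the Cayley--Bacharach condition governing local freeness of the extension concerns sections of $K_{\Bbb{P}^{2}}\otimes \mathcal{O}_{\Bbb{P}^{2}}(4)\otimes \mathcal{O}_{\Bbb{P}^{2}}(-1)=\mathcal{O}_{\Bbb{P}^{2}}$, i.e.\ constants vanishing on $9$ of the points (trivially satisfied, as in the paper's proof), not conics.
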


\begin{proof}
Let $\eta \in \Bbb{P}{^{2}}^{[10]}$ consist of $10$ distinct points in
general position. Notice that $Ext^{1}(\mathcal{O}_{\Bbb{P}^{2}}(3)\otimes
\mathcal{I}_{\eta },\mathcal{O}_{\Bbb{P}^{2}})\neq 0$ and there exists a
locally free extension
\begin{equation}
0\rightarrow \mathcal{O}_{\Bbb{P}^{2}}\rightarrow \mathcal{E}^{\prime
}\rightarrow \mathcal{O}_{\Bbb{P}^{2}}(3)\otimes \mathcal{I}_{\eta
}\rightarrow 0,  \label{eprime}
\end{equation}
because $K_{Y}\otimes \mathcal{O}_{\Bbb{P}^{2}}(3)=\mathcal{O}_{\Bbb{P}%
^{2}}, $ so that, for any point $w\in \eta ,$ the natural map $0=H^{0}(Y,%
\mathcal{I}_{\eta })=H^{0}(Y,K_{Y}\otimes \mathcal{O}_{\Bbb{P}%
^{2}}(3)\otimes \mathcal{I}_{\eta })\rightarrow H^{0}(Y,K_{Y}\otimes
\mathcal{O}_{\Bbb{P}^{2}}(3)\otimes \mathcal{I}_{\eta \backslash w})=H^{0}(Y,%
\mathcal{I}_{\eta \backslash w})=0$ is an isomorphism as required, see \cite
{dl}, Theorem 3.13 and its proof. Twisting (\ref{eprime}) by $\mathcal{O}_{%
\Bbb{P}^{2}}({1})$ and setting $\mathcal{E}=\mathcal{E}^{\prime }(1)$ gives
\begin{equation}
0\rightarrow \mathcal{O}_{\Bbb{P}^{2}}(1)\rightarrow \mathcal{E}\rightarrow
\mathcal{O}_{\Bbb{P}^{2}}(4)\otimes \mathcal{I}_{\eta }\rightarrow 0.
\label{edeg11g6onP2}
\end{equation}
Notice that $c_{1}(\mathcal{E})=\mathcal{O}{}_{\Bbb{P}^{2}}({5}),$ and $%
c_{2}(\mathcal{E})=14.$ Let $D=\pi ^{*}({}\mathcal{O}_{\Bbb{P}^{2}}({1})),$
i.e. $\epsilon =0$ and $A=\mathcal{O}{}_{\Bbb{P}^{2}}({1})$ in the notation
of Proposition \ref{uno}. Let ${\mathcal{E}}_{\mid _{\ell }}$ be the
restriction of $\mathcal{E}$ to any line $\ell .$ From (\ref{edeg11g6onP2})
one gets:
\begin{equation}
0\rightarrow \mathcal{O}_{\ell }(1+\varepsilon )\rightarrow {\mathcal{E}}%
_{\mid _{\ell }}\rightarrow \mathcal{O}_{\ell }(4-\varepsilon )\rightarrow 0,
\label{erestrict}
\end{equation}
where $\varepsilon =0,1,2,$ respectively, if $\ell $ passes through $0,1,2$
points of $\eta .$ Sequence (\ref{erestrict}) shows that ${\mathcal{E}}%
_{\mid _{\ell }}$ is very ample, so that assumptions a) and c) of
Proposition \ref{uno} are satisfied. To verify assumption b) of Proposition
\ref{uno} notice that $h^{1}(X,T-D)=h^{1}(\Bbb{P}^{2},\mathcal{E}(-1))=h^{1}(%
\Bbb{P}^{2},\mathcal{E}^{\prime }),$ which, looking at sequence (\ref{eprime}%
), vanishes if and only if $h^{1}(\Bbb{P}^{2},\mathcal{O}{}_{\Bbb{P}^{2}}({3}%
)\otimes \mathcal{I}{}_{\eta })$ vanishes. As $\eta $ consists of $10$
points in general position it is $H^{0}(\mathcal{O}{}_{\Bbb{P}^{2}}({3}%
)\otimes \mathcal{I}{}_{\eta })=0$ and thus the sequence
\begin{equation*}
0\rightarrow \mathcal{O}_{\Bbb{P}^{2}}(3)\otimes \mathcal{I}_{\eta
}\rightarrow \mathcal{O}_{\Bbb{P}^{2}}(3)\rightarrow \mathcal{O}_{\Bbb{P}%
^{2}}(3)\otimes \mathcal{O}_{\eta }\rightarrow 0
\end{equation*}
gives the required vanishing.
\end{proof}

\begin{remark}
\label{deg11g6onP2} Corollary \ref{bundledeg11g6onP2} implies the existence
of a family of 3-dimensional scrolls $(X,L)=(\Bbb{P}(\mathcal{E}),\mathcal{O}%
{}_{\Bbb{P}(\mathcal{E})}(1))$\ embedded by $|\mathcal{O}{}_{\Bbb{P}(%
\mathcal{E})}(1)|$\ in $\Bbb{P}^{7}$\ with $\deg (X)=[c_{1}(\mathcal{E}%
)]^{2}-c_{2}(\mathcal{E})=11,$\ and sectional genus $g(X)=6$ $($see\ also
Section 7). The existence of such threefolds was left as an open question in
\cite{bb1}, Proposition 4.2.3. The Hilbert scheme of a threefold $X\subset
\Bbb{P}^{7}$\ as in Corollary \ref{bundledeg11g6onP2} has an irreducible
component of dimension $83$, of which $X$\ is a smooth point. See \cite{bf},
Proposition 3.1, Corollary 3.3 and Remark 3.4, for details.
\end{remark}

Without any assumption on the positivity of $D$ it would be very difficult
to establish whether assumption a) in Proposition \ref{uno} is satisfied or
not. If $D$ were very ample it is well known, \cite{bs}, that if there is a
zero scheme $\xi $ for which a) fails, then the ambient variety is a surface
containing a $D$-line $\ell $ through $\xi ,$ such that all divisors in $|D|$
passing through $\xi $ are reducible as $\ell +D^{\prime }.$ In our
particular situation, keeping in mind that we are striving for sufficient
numerical criteria, we can relax conditions on $D$ as in the following
Proposition. Before proving it we need a Lemma.

\begin{lemma}
\label{Belt-Som} With the notation fixed in this Section, let $\mathrm{F}$
be the class of a generic fibre of $\pi $ in the Chow ring of $X.$ Let $%
\Sigma $ be a singular element in $|T|.$ Then $\Sigma $ is reducible, more
precisely $\Sigma =\pi ^{-1}(Z)\cup \Sigma ^{\prime }$ where $Z$ is an
effective divisor on $Y$ and $\Sigma ^{\prime }\cdot \mathrm{F}=1,$ and if $%
Z $ is maximal with respect to the previous decomposition then $\Sigma
^{\prime }$ is smooth and irreducible.
\end{lemma}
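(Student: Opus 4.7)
The plan is to use the canonical identification $H^0(X, T) = H^0(Y, \mathcal{E})$ (coming from $\pi_\ast T = \mathcal{E}$), which puts $|T|$ in bijection with $\mathbb{P}(H^0(\mathcal{E}))$; thus $\Sigma$ corresponds, up to scalar, to a section $\sigma \in H^0(Y, \mathcal{E})$, and $\Sigma$ is the zero scheme of $\sigma$ viewed as a section of $T$. First I would decompose the zero locus of $\sigma$ on $Y$ as $(\sigma)_0 = Z + W$, where $Z$ is the effective divisorial part (possibly zero) and $W$ is the residual closed subscheme of codimension at least two. Writing $\sigma = s_Z \cdot \sigma'$ with $s_Z \in H^0(\mathcal{O}_Y(Z))$ the defining section of $Z$ and $\sigma' \in H^0(Y, \mathcal{E}(-Z))$ having zero locus exactly $W$, one obtains a corresponding decomposition of the zero scheme on $X$:
\[
\Sigma = \pi^{-1}(Z) + \Sigma', \qquad \Sigma' \in |T - \pi^{\ast} Z|,
\]
with $\Sigma' = (\sigma')_0$ viewed as the zero scheme of $\sigma'$ as a section of $T - \pi^{\ast} Z$. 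The intersection with a generic fibre is then immediate from the projection formula, $\Sigma' \cdot \mathrm{F} = (T - \pi^{\ast} Z) \cdot \mathrm{F} = T \cdot \mathrm{F} - 0 = 1$.

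Next I would force $Z$ to be maximal by absorbing into it any additional vertical divisor that might appear in the residual; with this choice $\sigma'$ has no divisorial zero locus at all. Irreducibility of $\Sigma'$ is then a direct intersection-theoretic consequence: if $\Sigma' = A + B$ with $A, B$ non-trivial effective divisors, then $(A \cdot \mathrm{F}) + (B \cdot \mathrm{F}) = 1$, so one summand (say $B$) satisfies $B \cdot \mathrm{F} = 0$. A divisor on $X$ with vanishing intersection against the fibre class cannot contain any horizontal component and must therefore be of the form $\pi^{-1}(C)$ for some curve $C \subset Y$; absorbing $C$ into $Z$ would contradict maximality.

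The main obstacle is the smoothness of $\Sigma'$. By construction $\pi|_{\Sigma'} : \Sigma' \to Y$ has degree one and is an isomorphism away from the closed subscheme $W$, so $\Sigma'$ is the closure in $X$ of the rational section of $\pi$ determined by $\sigma'$. The delicate point is the local structure of $\Sigma'$ above the zero subscheme $W$ of $\sigma'$: using the hypothesis that $\Sigma$ is singular together with the maximality of $Z$, I would analyse the local equations of $\Sigma'$ around the fibres over $W$ and argue that any singular point of $\Sigma$ must already lie on the intersection $\pi^{-1}(Z) \cap \Sigma'$ (which is automatically contained in the singular locus of the union), so that $\Sigma'$ itself is smooth. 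This last local analysis is the technical heart of the proof.
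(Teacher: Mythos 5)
Your decomposition is essentially the one the paper uses --- split off the maximal vertical divisor $\pi ^{-1}(Z)$ and observe that the residual $\Sigma ^{\prime }$ meets a generic fibre in one point --- and your arguments for $\Sigma ^{\prime }\cdot \mathrm{F}=1$ and for the irreducibility of $\Sigma ^{\prime }$ (any component with zero intersection against $\mathrm{F}$ is vertical, contradicting maximality of $Z$) are correct. But the proposal stops exactly where the content of the lemma begins. Two things are asserted and never proved: first, that $\Sigma $ singular forces $Z\neq 0$ --- this \emph{is} the claim ``$\Sigma $ is reducible,'' and in your setup nothing prevents the divisorial part of $(\sigma )_{0}$ from being empty; second, that the residual $\Sigma ^{\prime }$ is smooth. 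Both are deferred to an unspecified ``local analysis around the fibres over $W$'' whose announced conclusion --- every singular point of $\Sigma $ already lies on $\pi ^{-1}(Z)\cap \Sigma ^{\prime }$ --- is precisely the statement to be proved, not a mechanism for proving it; maximality of $Z$ alone gives you nothing here.

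The missing idea, which is the whole of the paper's proof, is a fibre--multiplicity argument. If $R$ is a singular point of $\Sigma $ and $\mathrm{F}_{R}$ is the fibre through $R$, then $\mathrm{F}_{R}$ meets $\Sigma $ at $R$ with multiplicity at least $2$; since $\Sigma \cdot \mathrm{F}=1$ this forces $\mathrm{F}_{R}\subset \Sigma $. If $\mathrm{F}_{R}$ were an \emph{isolated} fibre contained in $\Sigma $, then near $\mathrm{F}_{R}$ the divisor $\Sigma $ is locally the blow up of $Y$ at $\pi (R)$ and hence smooth at $R$; therefore a singular point forces infinitely many contained fibres, i.e.\ a nonzero vertical divisor $\pi ^{-1}(Z)$, and iterating the same argument on the residual piece produces a smooth $\Sigma ^{\prime }$. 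Note that the local step you defer is genuinely delicate and cannot be waved through: over an isolated zero of the section the local model of the divisor is $\{af+bg=0\}\subset Y\times \Bbb{P}^{1}$, where $(f,g)$ are the local components of $\sigma $ and $[a:b]$ are fibre coordinates, and this is smooth along the fibre exactly when $df$ and $dg$ are independent at the zero, i.e.\ when the zero is transverse so that one really gets the ordinary blow up at a reduced point. Your plan has to confront exactly this computation; without it neither the reducibility of $\Sigma $ nor the smoothness of $\Sigma ^{\prime }$ is established.
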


\begin{proof}
Let $R$ be a singular point of $\Sigma $ and let $\mathrm{F}_{R}$ be the
fibre of $\pi $ passing through $R.$ Then $\mathrm{F}_{R}\simeq \Bbb{P}^{1}$
should intersect $\Sigma $ in $R$ with multiplicity at least $2.$ As $\Sigma
\cdot \mathrm{F}_{R}=1$ in the Chow ring of $X,$ this is possible only if $%
\mathrm{F}_{R}$ is contained in $\Sigma $. If $\mathrm{F}_{R}$ is an
isolated fibre contained in $\Sigma $ then $\Sigma $ would be, locally, the
blow up of $Y$ at $\pi (R),$ hence it would be smooth at $R$. It follows
that there are infinitely many fibres of $\pi $ contained in $\Sigma .$ So
that we can write $\Sigma =\pi ^{-1}(Z)\cup \Sigma ^{\prime }$ where $Z$ is
a suitable effective divisor on $Y$. Obviously $1=\Sigma \cdot \mathrm{F}%
=\Sigma ^{\prime }\cdot \mathrm{F.}$ If $\Sigma ^{\prime }$ is smooth and
irreducible we are done, otherwise we can argue as before for $\Sigma
^{\prime }$ until we get a decomposition with a smooth $\Sigma ^{\prime }\in
|T-\pi ^{*}Z|.$ Such a $\Sigma ^{\prime }$ is also irreducible thank to the
maximality of $Z.$
\end{proof}

\begin{proposition}
\label{due} With the notation fixed in this Section, assume:

\begin{itemize}
\item[a)]  $h^{0}(X,D)>3;$

\item[b)]  for all $B\in \mathrm{Pic}(Y)$ such that $T+\pi ^{*}B$ is
effective and $A-B$ is effective, then $\max \{h^{0}(X,T+\pi ^{*}B),$ $%
h^{0}(Y,A-B)\}<$ $h^{0}(X,D)-2.$
\end{itemize}

Then assumption \textrm{{a)} of Proposition \ref{uno} is satisfied with }$%
\varepsilon =1$\textrm{. }
\end{proposition}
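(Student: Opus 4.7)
I would argue by contradiction. Fix $\xi \in X^{[2]}$ and let $|D|_\xi \subseteq |D|$ denote the sublinear system of divisors passing through $\xi$. Since imposing the length-two subscheme $\xi$ gives at most two linear conditions, assumption a) yields $\dim |D|_\xi \geq h^0(X,D)-3 \geq 1$. Suppose, for contradiction, that no element of $|D|_\xi$ is smooth and irreducible.

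Because $D\cdot \mathrm{F}=T\cdot \mathrm{F}+\pi^*(A)\cdot \mathrm{F}=1$, the proof of Lemma \ref{Belt-Som} applies verbatim to $|D|$. Hence every $\Sigma \in |D|_\xi$ admits a unique maximal decomposition $\Sigma = \pi^{-1}(Z_\Sigma)+\Sigma^{\prime}_\Sigma$ with $Z_\Sigma$ a nonzero effective divisor on $Y$ and $\Sigma^{\prime}_\Sigma$ smooth irreducible of fibre-degree one, which prevents $\Sigma^{\prime}_\Sigma$ from containing any full fibre and thus grants the uniqueness. By irreducibility of $|D|_\xi$, the class $A-[Z_\Sigma]$ is constant, equal to some $B_0 \in \mathrm{Pic}(Y)$, on a dense open subset; both $T+\pi^*B_0$ and $A-B_0$ are then effective, so $B_0$ is admissible in assumption b).

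Via the identifications $H^0(X,T+\pi^*C)\cong H^0(Y,\mathcal{E}\otimes C)$, each section $s$ whose divisor lies in $|D|_\xi$ factors as $z\cdot s^{\prime}$, $z\in W:=H^0(Y,A-B_0)$, $s^{\prime}\in U:=H^0(Y,\mathcal{E}\otimes B_0)$. The uniqueness of the maximal decomposition makes the multiplication map $\mu:W\otimes U\to H^0(Y,\mathcal{E}\otimes A)$ injective on rank-one tensors whose image lies in $|D|_\xi$; the assignment $\Sigma \mapsto [z\otimes s^{\prime}]$ therefore produces an embedding of $|D|_\xi$ into the Segre variety $\Bbb{P}(W)\times \Bbb{P}(U)\subseteq \Bbb{P}(W\otimes U)$. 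The classical theorem that any projective linear subspace of a Segre variety is contained in a single ruling now forces all members of $|D|_\xi$ to share either a common $z_0$ or a common $s^{\prime}_0$. In the former case $\pi^{-1}((z_0))$ is a fixed part of $|D|_\xi$, which consequently injects into $|T+\pi^*B_0|$; in the latter, $|D|_\xi$ injects into $|A-B_0|$. Either way,
\[
\dim |D|_\xi \leq \max\bigl\{h^0(Y,A-B_0),\; h^0(X,T+\pi^*B_0)\bigr\}-1,
\]
which combined with $\dim |D|_\xi \geq h^0(X,D)-3$ contradicts assumption b).

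The step most in need of care will be verifying that the embedding $|D|_\xi \hookrightarrow \Bbb{P}(W)\times \Bbb{P}(U)$ is by a linear subspace, and hence subject to the Segre-ruling bound: one must combine the uniqueness of the maximal decomposition with the linearity of $|D|_\xi$ in $\Bbb{P}(H^0(X,D))$, handling possible non-injectivity of the full map $\mu$ on $W\otimes U$ by restricting to the locus where the decomposition is well-defined.
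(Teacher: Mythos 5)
Your overall architecture --- argue by contradiction, use Lemma \ref{Belt-Som} to decompose every member of $|D\otimes \mathcal{I}_{\xi }|$ into a vertical part $\pi ^{-1}(Z_{\Sigma })$ and a horizontal part $\Sigma _{\Sigma }^{\prime }$, and conclude that the system acquires a fixed component whose complementary moving part violates assumption b) --- matches the paper's, and your final dimension count is the right one. But the step you yourself flag as ``most in need of care'' is a genuine gap, not a technicality. The map induced by $\mu $ from $\Bbb{P}(W\otimes U)$ to $\Bbb{P}(H^{0}(Y,\mathcal{E}\otimes A))$ is a linear projection, and a linear subspace of the \emph{image} of the Segre variety under such a projection need not lift to a linear subspace of the Segre, so the classical ruling theorem cannot be invoked. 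This is not a hypothetical worry: in $\Bbb{P}^{2}$ the pencil $\lambda x^{2}+\mu y^{2}$ consists entirely of reducible conics $(\alpha x+\beta y)(\alpha x-\beta y)$, yet its members share no common factor and neither factor is constant; the lift of this pencil to $\Bbb{P}(W)\times \Bbb{P}(W)$ is a conic, not a line. So the purely linear-algebraic principle ``a linear system all of whose members factor must have a fixed factor'' is false in general, and any repair must import some geometry of the fibration. (A secondary unproved assertion is that the class $A-[Z_{\Sigma }]$ of the maximal decomposition is constant on a dense open subset of $|D|_{\xi }$; since $\mathrm{Pic}(Y)$ may be positive-dimensional, this requires an incidence-variety or countability argument, not merely the irreducibility of $|D|_{\xi }$.)

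The paper closes exactly this gap by a different mechanism. It applies Bertini's theorem to place $Sing(S)$ inside the base locus $\mathrm{E}$ of $|V|=|D\otimes \mathcal{I}_{\xi }|$ for generic $S,$ and then exploits the fibration: since $S\cdot \mathrm{F}=1,$ a singular point of $S$ forces the entire fibre through it to lie in $S,$ and $S=\pi ^{-1}(A^{\prime })\cup S^{\prime }$ is singular along the curve $\pi ^{-1}(A^{\prime })\cap S^{\prime },$ which dominates $A^{\prime }.$ This pushes a whole surface $\pi ^{-1}(Z)$ into $\mathrm{E},$ producing the fixed component directly. That geometric input (singular point implies whole fibre contained) is precisely what excludes the ``twisted'' conic-type pencils that defeat the Segre argument. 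To complete your proof you would need to show, not merely assert, that with the maximal vertical part the lift of $|D|_{\xi }$ to the Segre is linear --- and the natural way to establish that is, in effect, the paper's Bertini argument.
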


\begin{proof}
As $h^{0}(X,D)>3,$ for all $\xi \in X^{[2]}$ the linear system $%
|V|=|D\otimes \mathcal{I}{}_{\xi }|$ is not empty. Let $S$ be an element of $%
|V|.$ If $S$ is smooth we have nothing to prove, otherwise, by applying
Lemma \ref{Belt-Som} to $\mathcal{E}\otimes \mathcal{O}_{Y}(A),$ we can
write $S=\pi ^{-1}(A^{\prime })\cup S^{\prime }$ where $S^{\prime }\in
|T+\pi ^{*}B|$ is effective, irreducible and smooth for some $B\in \mathrm{%
Pic}(Y)$ and where $A^{\prime }\in |A-B|$ is effective on $Y.$

Notice that $\dim (|V|)\geq \dim (|D|)-2.$ It is enough to show that not
every element of $|V|$ is singular. By contradiction, let us assume that all
elements $S$ of $|V|$ are singular. By Bertini's Theorem $|V|$ has a base
locus \textrm{E} and for any generic $S\in |V|$ we have that $%
Sing(S)\subseteq $\textrm{E}. On the other hand we know that $S=\pi
^{-1}(A^{\prime })\cup S^{\prime }$ as above and $S$ is singular along $\pi
^{-1}(A^{\prime })\cap S^{\prime }$ which can not be the union of a finite
number of fibres and can not contain isolated fibres (see the proof of Lemma
\ref{Belt-Som}), so that there is a maximal curve $C_{S}\subseteq
Sing(S)\subseteq $\textrm{E} such that $\pi [Sing(S)]=\pi (C_{S})$, $\dim
\pi (C_{S})=1,$ $\pi (C_{S})\subseteq A^{\prime }$ and $\pi ^{-1}[\pi
(C_{S})]=\pi ^{-1}(A^{\prime });$ hence $\dim (\mathrm{E})\geq 1$ and $\dim
[\pi (\mathrm{E})]\geq 1.$ Let us consider $\pi (\mathrm{E}).$

If $\dim [\pi (\mathrm{E})]=2$ then $\dim (\mathrm{E})=2$ and $|V|$ $=(T+\pi %
^{*}B)+|\pi ^{*}A-\pi ^{*}B|,$ where $B$ is a suitable divisor of $Y$ such
that $T+\pi ^{*}B$ is effective on $X,$ where $A-B$ is effective on $Y$ and
where $(T+\pi ^{*}B)$ is in the fixed part of $|V|.$ But this is not
possible, by assumption b), as $\dim (|\pi ^{*}A-\pi ^{*}B|)=\dim (|V|)\geq %
\dim (|D|)-2.$

If $\dim [\pi (\mathrm{E})]=1$ let us consider the above curves $C_{S}$ $%
\subseteq $\textrm{E }for any generic $S\in |V|.$ If $C_{S}=C_{\overline{S}}$
for some fixed $\overline{S}=\pi ^{-1}(\overline{A}^{\prime })\cup \overline{%
S}^{\prime }\in |V|$ then $\pi ^{-1}(\overline{A}^{\prime })=\pi ^{-1}[\pi
(C_{\overline{S}})]\subseteq $\textrm{E. }If not, the curves $C_{S}$ $%
\subseteq $\textrm{E} fill some surface $\pi ^{-1}(Z)$ for some effective
divisor $Z$ of $Y$, otherwise $\dim [\pi (\mathrm{E})]=2.$ Hence, in any
case, \textrm{\ E} contains some surface $\pi ^{-1}(Z).$ By choosing $Z$
maximal we have that $|V|=|T+\pi ^{*}B|+(\pi ^{*}A-\pi ^{*}B)$ where $B$ is
a suitable divisor of $Y$ such that $T+\pi ^{*}B$ is effective on $X,$ where
$Z=A-B$ is effective on $Y$ and where $(\pi ^{*}A-\pi ^{*}B)$ is the fixed
part of $|V|.$ But this is not possible, by assumption b), as $\dim (|T+\pi
^{*}B|)=\dim (|V|)\geq \dim (|D|)-2.$
\end{proof}

Proposition \ref{uno}, in case $\epsilon =1,$ still requires the very
ampleness of a line bundle, ${T}_{\mid _{S}},$ on a surface section $S$ of $%
X.$ Even in the most simple situation, when $S$ is isomorphic to the blowing
up of $Y$ at $c_{2}(\mathcal{E}\otimes \mathcal{O}_{Y}(A))$ distinct points,
the very ampleness of $T_{|S}$ depends on the position of these points on $Y$
, and in general we know nothing about it. The following Proposition
circumvents this difficulty by directly showing separation of length-2
zero-schemes on $X.$ The basic technique is the construction of separating
elements as reducible divisors. They are constructed as sums of a\textit{\
horizontal} component and a \textit{vertical} one.

\begin{proposition}
\label{tre}With the notation fixed in this Section, let $B\in \text{\textrm{%
Pic}}(Y)$ be effective. Assume:

\begin{itemize}
\item[$\alpha$)]  for all pairs of distinct points $\{P,Q\}\subset X$

\begin{itemize}
\item[i)]  there exist $\Sigma \in |T+\pi ^{*}A+\pi ^{*}B|,$ which is
reducible as the union of a smooth surface $S_{2}\in |T+\pi ^{*}A|$ passing
through $P$ and not through $Q,$ and a smooth surface $S_{1}\in |\pi ^{*}B|$
passing through $Q;$

\item[ii)]  there exist $\sigma _{1}\in H^{0}(T_{|S_{1}}),$ such that $%
\sigma _{1}(Q)\neq 0$ and $\sigma _{1}(P)={0}$ in case $P\in S_{1}\cap S_{2};
$

\item[iii)]  there exist $\sigma _{2}\in H^{0}(T_{|S_{2}}),$ such that $%
\sigma _{2}(P)=0;$

\item[iv)]  there exist $\sigma \in H^{0}(T_{|\Sigma }),$ such that ${\sigma
}_{\mid _{S_{1}}}=\sigma _{1}$ and ${\sigma }_{\mid _{S_{2}}}=$ $\sigma _{2};
$
\end{itemize}

\item[$\beta $)]  for any point $P\in X$ and for any direction $\underline{q}%
\in \Bbb{T}_{P}(X)$

\begin{itemize}
\item[i)]  there exists $\Sigma \in |T+\pi ^{*}A+\pi ^{*}B|,$ which is
reducible as the union of a smooth surface $S_{2}\in |T+\pi ^{*}A|$ and a
smooth surface $S_{1}\in |\pi ^{*}B|,$ both passing through $P;$

\item[ii)]  there exists $\sigma _{1}\in H^{0}(T_{|S_{1}})$ such that $%
\sigma _{1}(P)=0$ and $(\sigma _{1})_{0}$ is smooth at $P$ with tangent
vector \underline{$t$}$\in \Bbb{T}_{P}(S_{1});$

\item[iii)]  there exists $\sigma _{2}\in H^{0}(T_{|S_{2}})$ such that $\in
\sigma _{2}(P)=0$ and $(\sigma _{2})_{0}$ is smooth at $P$ with tangent
vector \underline{$v$}$\in \Bbb{T}_{P}(S_{2})$ in such a way that \underline{%
$q$}$\notin <\underline{t},\underline{v}>;$

\item[iv)]  there exists $\sigma \in H^{0}(T_{|\Sigma }),$ such that $\sigma
_{|S_{1}}=\sigma _{1}$ and $\sigma _{|S_{2}}=\sigma _{2}.$
\end{itemize}

\item[$\gamma $)]  $h^{1}(Y,-A-B)=0.$
\end{itemize}

Then $T$ is very ample on $X$.
\end{proposition}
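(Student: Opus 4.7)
The plan is to check the very ampleness criterion by producing, for every $\xi\in X^{[2]}$, a global section of $T$ separating its two components. As recalled in Section 2, $\xi$ is either reduced as $\{P,Q\}$ or supported at a single point $P$ with tangent direction $\underline{q}\in\Bbb{T}_{P}(X)$; assumptions $\alpha)$ and $\beta)$ are tailored to these two cases, while $\gamma)$ supplies the lifting tool common to both. Before splitting cases I would record the lifting step: for any $\Sigma\in|T+\pi^{*}A+\pi^{*}B|$, the ideal sequence of $\Sigma$ twisted by $T$ yields $0\to\mathcal{O}_{X}(-\pi^{*}(A+B))\to\mathcal{O}_{X}(T)\to\mathcal{O}_{\Sigma}(T_{|\Sigma})\to 0$. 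Since $\pi$ is a $\Bbb{P}^{1}$-bundle, $\pi_{*}\mathcal{O}_{X}=\mathcal{O}_{Y}$ and $R^{1}\pi_{*}\mathcal{O}_{X}=0$, so the projection formula and Leray give $H^{1}(X,-\pi^{*}(A+B))\cong H^{1}(Y,-A-B)=0$ by $\gamma)$; hence every $\sigma\in H^{0}(\Sigma,T_{|\Sigma})$ lifts to a $\tau\in H^{0}(X,T)$ with $\tau_{|\Sigma}=\sigma$.

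For case $\alpha)$, item (i) supplies $\Sigma=S_{1}\cup S_{2}$ with $P\in S_{2}$, $Q\notin S_{2}$, and $Q\in S_{1}$, while items (ii)--(iv) assemble $\sigma_{1}$ and $\sigma_{2}$ compatibly along $S_{1}\cap S_{2}$ into $\sigma\in H^{0}(T_{|\Sigma})$. Since $Q\in S_{1}\setminus S_{2}$, $\sigma(Q)=\sigma_{1}(Q)\neq 0$; since $P\in S_{2}$, $\sigma(P)=\sigma_{2}(P)=0$, with the requirement $\sigma_{1}(P)=0$ in (ii) precisely enforcing compatibility on $S_{1}\cap S_{2}$ when $P$ lies there. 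The lift $\tau$ then separates $\{P,Q\}$ on $X$.

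For case $\beta)$, items (i)--(iv) produce $\sigma\in H^{0}(T_{|\Sigma})$ vanishing at $P$, whose zero locus on $\Sigma$ is the union $(\sigma_{1})_{0}\cup(\sigma_{2})_{0}$ of two smooth curves meeting at $P$ with tangent directions $\underline{t}\in\Bbb{T}_{P}(S_{1})$ and $\underline{v}\in\Bbb{T}_{P}(S_{2})$. After lifting to $\tau$, the Cartier divisor $(\tau)_{0}\subset X$ contains both curves at $P$, so $\Bbb{T}_{P}((\tau)_{0})\supseteq\langle\underline{t},\underline{v}\rangle$. A local one-jet computation in coordinates adapted to the transverse pair $S_{1},S_{2}$ shows $(\tau)_{0}$ is smooth at $P$, so $\Bbb{T}_{P}((\tau)_{0})$ is a two-plane and must coincide with $\langle\underline{t},\underline{v}\rangle$; the assumption $\underline{q}\notin\langle\underline{t},\underline{v}\rangle$ then gives $d\tau_{P}(\underline{q})\neq 0$, and $\tau$ separates $\xi$.

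The main obstacle is the tangent-plane argument in case $\beta)$. To pass from \emph{$(\tau)_{0}$ contains the two directions} to \emph{$\Bbb{T}_{P}((\tau)_{0})=\langle\underline{t},\underline{v}\rangle$}, I must verify both the smoothness of $(\tau)_{0}$ at $P$ (via an explicit local lift of $\sigma$ across the node of $\Sigma$ at $P$) and the linear independence of $\underline{t}$ and $\underline{v}$, so that their span is genuinely a plane. The latter reflects the vertical/horizontal contrast between $S_{1}\in|\pi^{*}B|$ and $S_{2}\in|T+\pi^{*}A|$: $\Bbb{T}_{P}(S_{1})\cap\Bbb{T}_{P}(S_{2})$ is just the fibre direction, and the hypothesis must be read as implicitly requiring the $\sigma_{i}$ to be chosen so that $\underline{t}$ and $\underline{v}$ are not both aligned along that line.
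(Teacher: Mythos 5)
Your proposal is correct and follows essentially the same route as the paper: lift sections from the reducible divisor $\Sigma =S_{1}\cup S_{2}$ via the vanishing $h^{1}(Y,-A-B)=0$, using $\alpha $) for reduced length-two schemes and $\beta $) for tangent directions, and then argue that the smoothness of $(\sigma _{1})_{0}$ and $(\sigma _{2})_{0}$ at $P$ forces $(\tau )_{0}$ to be smooth there with tangent plane $<\underline{t},\underline{v}>$. The only differences are minor: the paper deduces smoothness of $(\tau )_{0}$ at $P$ from the fact that a singular member of $|T|$ must contain the fibre of $\pi $ through $P$ (which is incompatible with $(\sigma _{1})_{0}$ being smooth at $P$), where you propose a direct one-jet restriction argument, and the paper explicitly supplies the complementary sections needed to finish the separation (the role-reversed section in case $\alpha $) and a section $\tau _{2}$ with $\tau _{2}(P)\neq 0$, obtained again from $\alpha $), in case $\beta $)), steps you leave implicit but which are immediate from the hypotheses you have already invoked.
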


\begin{proof}
For any reduced length-2 zero scheme $\{P,Q\}\in X^{[2]}$, assumption $%
\alpha )$ gives a section $\sigma \in H^{0}(T_{|\Sigma })$ such that $\sigma
(P)=0,\sigma (Q)\neq 0,$ and, reversing the roles of $P$ and $Q,$ a section $%
\sigma ^{\prime }\in H^{0}(T_{|\Sigma ^{\prime }})$ such that $\sigma
^{\prime }(Q)=0,$ $\sigma ^{\prime }(P)\neq 0.$ As in the proof of
Proposition \ref{uno}, assumption $\gamma $) allows such $\sigma $ and $%
\sigma ^{\prime }$ to be lifted to elements of $H^{0}(T)$ separating $%
\{P,Q\}.$

For any point $P\in X$ and for any direction \underline{$q$}$\in \Bbb{T}
_{P}(X),$ assumption $\beta )$ gives $\sigma \in H^{0}(T_{|\Sigma })$ such
that $\sigma (P)=0,$ and such that $\Bbb{T}_{P}((\sigma )_{0})$ does not
contain \underline{$q$}. Once again, assumption $\gamma $) allows $\sigma $
to be lifted to $\tau _{1}\in H^{0}(T)$ with ${\tau _{1}}_{|\Sigma }=\sigma $
such that $\tau _{1}(P)=0.$ Notice that $(\tau _{1})_{0}$ is smooth at $P;$
in fact $(\tau _{1})_{0}$ can be singular at $P$ only if it contains the
fibre of $\pi $ through $P,$ but in this case it could not cut $(\sigma
_{1})_{0}$ on $S_{1},$ as in our assumptions, because $(\sigma _{1})_{0}$ is
smooth at $P.$ Hence $\Bbb{T}_{P}((\tau _{1})_{0})=$ $<\underline{t},%
\underline{v}>$ and it does not contain \underline{$q$}. Assumption $\alpha
) $ guarantees the existence of a section $\tau _{2}\in H^{0}(T)$ such that $%
\tau _{2}(P)\neq 0.$ Thus $\{\tau _{1},\tau _{2}\}$ separate the zero scheme
$\{P,\underline{q}\}.$
\end{proof}

We conclude this Section with a simple very ampleness criterion for line
bundles on surfaces. The idea of this proof will be used in \S 4 to give a
criterion for our vector bundles.

\begin{proposition}
\label{cinque} Let $S$ be a smooth projective surface. Let $D,A\in \text{%
\textrm{Pic}}(S)$ and let $z\geq 1$ be a positive integer such that $zA$ is
very ample. Assume:

\begin{itemize}
\item[1)]  for any $\xi \in S^{[2]}$ there exists a smooth, irreducible
curve $\gamma \in |zA|$ passing through $\xi ;$

\item[2)]  $DA\geq (z-1)A^{2}+2p_{a}(A)+1$ ;

\item[3)]  $h^{1}(S,D-zA)=0.$
\end{itemize}

Then $D$ is very ample.
\end{proposition}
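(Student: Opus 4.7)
The plan is to adapt the classical lifting-of-sections strategy, very much in the spirit of Proposition \ref{uno}: reduce the separation of an arbitrary zero-scheme $\xi \in S^{[2]}$ on $S$ to separation on a smooth curve $\gamma$ through $\xi$, on which we can appeal to the classical numerical criterion $\deg \geq 2g+1$ for very ampleness of a line bundle on a smooth curve. This is a one-step passage from a surface problem to a curve problem.

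First, given $\xi \in S^{[2]}$, hypothesis 1) produces a smooth, irreducible curve $\gamma \in |zA|$ passing through $\xi$. Second, I would verify that $D_{|\gamma}$ is very ample on $\gamma$ by checking $\deg(D_{|\gamma}) \geq 2 g(\gamma) + 1$. Here $\deg(D_{|\gamma}) = D\cdot zA = zDA$, while adjunction applied first to $\gamma \in |zA|$ and then to $A$ itself (to eliminate $A\cdot K_S$ in favour of $p_a(A)$ and $A^2$) yields
\[
2 p_a(\gamma) - 2 \;=\; zA \cdot (zA + K_S) \;=\; z(z-1)A^2 + 2z\, p_a(A) - 2z.
\]
Multiplying hypothesis 2) by $z$ and comparing with this expression gives $z DA \geq 2 p_a(\gamma) + 1$, with a slack of $3(z-1) \geq 0$; note that the hypothesis $z \geq 1$ is exactly what makes the bound go through, and that the constant in 2) is tight at $z=1$. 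Hence $D_{|\gamma}$ is very ample on the smooth curve $\gamma$, so $|D_{|\gamma}|$ separates $\xi$ on $\gamma$.

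Third, and finally, I would lift. The short exact sequence
\[
0 \longrightarrow \mathcal{O}_S(D-zA) \longrightarrow \mathcal{O}_S(D) \longrightarrow \mathcal{O}_\gamma(D_{|\gamma}) \longrightarrow 0
\]
together with the vanishing $h^1(S, D-zA) = 0$ from hypothesis 3), gives surjectivity of the restriction $H^0(S,D) \to H^0(\gamma, D_{|\gamma})$, so the sections separating $\xi$ on $\gamma$ lift to sections of $D$ separating $\xi$ on $S$. Since $\xi \in S^{[2]}$ was arbitrary, $D$ is very ample.

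The only substantive point is the bookkeeping in the second step: hypothesis 2) is phrased in terms of $p_a(A)$, while the relevant genus is $p_a(\gamma) = p_a(zA)$, so the two must be matched via adjunction. Nothing else in the argument requires any real thought; everything reduces to a routine cohomological lifting once step 2 is in place.
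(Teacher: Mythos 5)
Your proposal is correct and follows essentially the same route as the paper's proof: choose $\gamma\in|zA|$ through $\xi$ via hypothesis 1), use adjunction to turn hypothesis 2) into $\deg(D_{|\gamma})=zDA\geq 2g(\gamma)+1$ (the paper phrases 2) as $DA\geq(zA+K_S)A+3$ and multiplies by $z$, which is the same bookkeeping with the same slack $3(z-1)$), and lift the separating sections using hypothesis 3). No discrepancies.
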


\begin{proof}
Let $\xi \in S^{[2]}$ and let $\gamma $ be as in assumption 1). Assumption
2) implies that ${D}_{|\gamma }$ is very ample. In fact $2g(\gamma
)-2=(zA+K_{S})zA,$ hence $2g(\gamma )+1=$ $(zA+K_{S})zA+3$ and $\deg ({D}
_{|\gamma })=zDA\geq 2g(\gamma )+1=$ $(zA+K_{S})zA+3$ because $DA\geq
(z-1)A^{2}+2p_{a}(A)+1=(zA+K_{S})A+3.$ As ${D}_{|\gamma }$ is very ample we
have two sections $\{\sigma _{1},\sigma _{1}^{\prime }\},$ in $H^{0}(\gamma ,%
{D}_{|\gamma }),$ that separate $\xi $ on $\gamma .$ Assumption 3)
guarantees that we can separate $\xi $ on $S$ by lifting $\sigma _{1}$ and $%
\sigma _{1}^{\prime }$ to sections in $H^{0}(S,D).$
\end{proof}

\begin{remark}
The existence of certain surfaces $S\subset \Bbb{P}^{4}$\ of degree $14$\
which are ruled over a genus $2$\ curve, with invariant $e=-2,$\ is a long
standing open problem, see for example \cite{hr}. The candidate very ample
line bundle giving the embedding of $S$\ is predicted to be of numerical
class $7C_{0}-6f.$\ As an application of Proposition \ref{cinque} one can
prove that $D\equiv 7C_{0}+2f$\ is very ample on $S,$\ by setting $A\equiv
C_{0}+5f$, and $z=1.$\ One could then try to solve the original problem by
embedding $S$\ with $D$\ and finding a suitable projection.
\end{remark}

\section{A numerical very ampleness criterion}

As indicated in the introduction, when the base surface $Y$ has a simple
Picard group, one can hope to obtain sufficient conditions to characterize
very ampleness of vector bundles over $Y.$

The heart of this section is Theorem \ref{Valma}. It is obtained using the
same approach described in Section 3 with an interesting twist. The divisor $%
D,$ chosen through the length $2$ zero-scheme that needs separation, is
obtained as a sum of a number of components, all chosen as pull backs of
suitable sections of the base surface. Similarly, the global sections of the
tautological line bundle $T,$ performing the required separation are lifted
from sections glued together from well behaved sections on each of the
components of $D.$

Firstly we have the following

\begin{lemma}
\label{punti-tg} Let $Y=\mathbf{F}_{e}$ and let $A\equiv C_{0}+bf$ be a very
ample divisor on $Y.$ Let $P_{1},...,P_{r}$ be $r$ distinct points on $Y$
such that no two of them lie on the same fibre. Let $F:=\{f_{i}\}$ where $%
f_{i}=\rho ^{*}(\rho (P_{i}).$ If $h^{0}(Y,A)\geq 2r$ then there exists at
least an element $\gamma \in |A|$ passing smoothly through $P_{1},...,P_{r}$
and intersecting transversely any fixed finite set of fibres $\Phi
:=\{\varphi _{j}\}$ with $\Phi \cap F=\emptyset .$
\end{lemma}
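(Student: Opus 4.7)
The plan is to reduce the assertion to a dimension count followed by Bertini. Three ingredients are needed: (i) the $r$ points impose independent conditions on $|A|$, (ii) a general element of the resulting sublinear system is smooth off the $P_{i}$'s and transverse to the fibres in $\Phi$, and (iii) a general element is also smooth at each $P_{i}$.

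The key input for (i) is the disjoint-fibre hypothesis. For each index $i$, let $p_{i}\in H^{0}(\Bbb{P}^{1},\mathcal{O}_{\Bbb{P}^{1}}(r-1))$ be a polynomial vanishing at $\rho(P_{j})$ for $j\neq i$ and not at $\rho(P_{i})$; such $p_{i}$ exists because the $r$ images $\rho(P_{j})$ are distinct. Then $\rho^{*}p_{i}\in H^{0}(Y,(r-1)f)$ vanishes along $f_{j}$ for $j\neq i$ and not along $f_{i}$. Multiplying by a section $s\in H^{0}(Y,A-(r-1)f)=H^{0}(Y,C_{0}+(b-r+1)f)$ that is nonzero at $P_{i}$ (the hypothesis $h^{0}(Y,A)\geq 2r$ easily implies $h^{0}(Y,A-(r-1)f)\geq 2$, and one handles the case $P_{i}\in C_{0}$, where $C_{0}$ may be a fixed component for small $b$, by a minor variant of the construction) produces a section of $A$ vanishing at every $P_{j}$ with $j\neq i$ and not at $P_{i}$. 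Hence the evaluation map $H^{0}(Y,A)\to \bigoplus_{i}\Bbb{C}_{P_{i}}$ is surjective, so
\[
\dim |A\otimes \mathcal{I}_{\{P_{1},\ldots ,P_{r}\}}|\ \geq \ h^{0}(Y,A)-1-r\ \geq \ r-1\ \geq \ 0.
\]

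For (ii), since $|A|$ has no fixed component (by very ampleness), the base locus of $|A\otimes \mathcal{I}_{\{P_{1},\ldots ,P_{r}\}}|$ is finite and contains $\{P_{1},\ldots ,P_{r}\}$; in particular none of the $\varphi_{j}$ belongs to it, since $\Phi \cap F=\emptyset$. By Bertini the generic element is smooth away from the base locus and, not containing any $\varphi_{j}$, meets each of them in $A\cdot f=1$ point, forcing transversality. For (iii), the very ampleness of $A$ at $P_{i}$ (separation of tangent vectors) lets me refine the construction of step (i) to prescribe any tangent direction at $P_{i}$; consequently, inside $|A\otimes \mathcal{I}_{\{P_{1},\ldots ,P_{r}\}}|$, the sublocus of curves singular at $P_{i}$ is cut out by two genuinely independent linear conditions and is thus a proper closed subvariety. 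A finite union of proper closed subvarieties of the irreducible linear system is proper, so a generic element is simultaneously smooth at every $P_{i}$.

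The main obstacle is step (i), the independence of the $r$ point conditions on $|A|$. This is not a formal consequence of very ampleness of $A$; it relies essentially on the hypothesis that no two $P_{i}$ share a fibre, to build the "screening" sections via polynomials of degree $r-1$ on $\Bbb{P}^{1}$. The bookkeeping needed to ensure the auxiliary sections of $A-(r-1)f$ do not all vanish at $P_{i}$ is the only subtlety; once it is dispatched, the remaining Bertini and tangent-direction arguments are standard.
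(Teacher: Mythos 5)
Your strategy --- impose only the $r$ point conditions and then argue that the generic member of $|A\otimes \mathcal{I}_{\{P_{1},\ldots ,P_{r}\}}|$ does everything --- is genuinely different from the paper's, and it breaks down because it ignores the curve $C_{0}$. Take $Y=\mathbf{F}_{2}$ and $A\equiv C_{0}+3f$, which is very ample with $h^{0}(Y,A)=6$, and let $P_{1},P_{2},P_{3}$ be three distinct points of $C_{0}$ (automatically on distinct fibres), so the hypothesis $h^{0}(Y,A)\geq 2r$ holds with equality. Since $A\cdot C_{0}=1$, any member of $|A|$ through all three points must contain $C_{0}$; hence $|A\otimes \mathcal{I}_{\{P_{1},P_{2},P_{3}\}}|=C_{0}+|3f|$ has dimension $3>h^{0}(Y,A)-1-r=2$, so the points do \emph{not} impose independent conditions: your step (i) is false, and no ``minor variant'' of the screening construction can rescue it, because here $H^{0}(Y,A-(r-1)f)=H^{0}(Y,C_{0}+f)$ consists entirely of sections vanishing along $C_{0}$, hence at every $P_{i}$. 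The same example kills step (ii): imposing base points can create fixed components, so the base locus of $|A\otimes \mathcal{I}_{\{P_{1},\ldots ,P_{r}\}}|$ need not be finite even though $|A|$ has no fixed component (here it contains $C_{0}$), and Bertini tells you nothing at the $P_{i}$, which lie in the base locus in any case. Step (iii) is also unjustified: very ampleness of $A$ separates tangent vectors using the \emph{full} system $|A|$, not the subsystem through the other $r-1$ points; in the example every member of $C_{0}+|3f|$ that is smooth at $P_{i}$ has the same tangent direction there, so one cannot ``prescribe any tangent direction at $P_{i}$'' inside the subsystem, and the claimed codimension-two bound for the singular locus does not follow.

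The idea you are missing is the one the paper's proof is built on. Because $A\cdot f=1$, a member $\gamma \in |A|$ that is singular at $P_{i}$ must contain the fibre $f_{i}$ through $P_{i}$ (otherwise $\gamma \cdot f_{i}\geq 2$). The paper therefore imposes the $2r$ conditions ``pass through $P_{i}$ with a prescribed non-vertical tangent direction'' (this is where $h^{0}(Y,A)\geq 2r$ enters), and when the resulting $\gamma $ is singular at some of the $P_{i}$ it splits off the corresponding fibres, $\gamma =\gamma ^{\prime }+\sum b_{j}f_{j}$, with $\gamma ^{\prime }$ smooth at each $P_{i}$; since all fibres of $\mathbf{F}_{e}$ are linearly equivalent, the offending fibres are then replaced by generic ones disjoint from $F\cup \Phi $, and the same swap disposes of any fibres of $\Phi $ contained in $\gamma ^{\prime }$. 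Transversality to the remaining fibres is automatic from $A\cdot f=1$, which is the one point where your argument is correct and coincides with the paper's. Without some mechanism of this kind for trading away vertical components, the dimension-count-plus-Bertini approach cannot be completed.
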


\begin{proof}
First of all, let us consider the case in which the set $\Phi $ is empty.
For any point $P_{i}$ let us choose a vector \underline{$q$}$_{i}\in \Bbb{T}
_{P_{i}}(Y)$ pointing towards a direction different from that of $f_{i.}$ By
assumption, there exists at least an element $\gamma \in |A|$ passing
through $P_{1},...,P_{r}$ and having tangent direction \underline{$q$}$_{i}$
at $P_{i}$ $i=1,...,r$ ($2r$ linear conditions). If $\gamma $ is singular
at, say, $P_{1},...,P_{h}$ then $\gamma $ must be reducible into an element $%
\gamma ^{\prime }\in |A-b_{1}f_{1}-b_{2}f_{2}-...-b_{h}f_{h}|$ and the union
of fibres $f_{1},f_{2},...,f_{h}$ with multiplicities $b_{1},b_{2},...,b_{h}$
(see also the proof of Proposition \ref{due}). Now $\gamma ^{\prime }$ is
smooth at any $P_{i}$ and, by our assumptions, $\gamma ^{\prime }$ has
tangent vector \underline{$q$}$_{i}$ at any $P_{i}.$ One can choose a set of
$h$ generic fibres $\{$ $f_{1}^{\prime },f_{2}^{\prime },...,f_{h}^{\prime
}\}\cap F=\emptyset $ so that $\gamma ^{\prime }+b_{1}f_{1}^{\prime
}+b_{2}f_{2}^{\prime }+...+b_{h}f_{h}^{\prime }\in |A|$ satisfies our
request; recall that $b_{1}f_{1}^{\prime }+b_{2}f_{2}^{\prime
}+...+b_{h}f_{h}^{\prime }$ is linearly equivalent to $%
b_{1}f_{1}+b_{2}f_{2}+...+b_{h}f_{h}$.

Now let us assume that $\Phi $ is not empty and let us proceed as in the
previous case. If $\gamma ^{\prime }$ cuts every fibre of $\Phi $
tranversally we are done. Otherwise $\gamma ^{\prime }$ must be reducible
into an element $\gamma ^{\prime \prime }\in |A-b_{1}\varphi
_{1}-b_{2}\varphi _{2}-...-b_{h}\varphi _{h}|$ and the union of fibres $%
\varphi _{1},\varphi _{2},...,\varphi _{h}$ with multiplicities $%
b_{1},b_{2},...,b_{h}$. One can then argue as in the previous case.
\end{proof}

\begin{definition}
Let $W$\ be a $0$-dimensional reduced scheme on a ruled surface $Y.$\ For
any fixed fibre $\overline{\mathit{f}}$\ of $Y$\ we can compute $length(%
\mathcal{O}_{Y}(\overline{\mathit{f}})\otimes \mathcal{O}_{W}).$\ Let $%
lm_{Y}(W)$\ be the maximun of these lengths as $\overline{\mathit{f}}$\
varies among the fibre of $Y.$
\end{definition}

We can now prove prove the main Theorem of this section.

\begin{theorem}
\label{Valma}Let $Y=\mathbf{F}_{e}$ and let $L=a_{l}C_{0}+b_{l}f$ and $%
M=a_{m}C_{0}+b_{m}f$ be line bundles over $Y.$ Let $\mathcal{E}$ be a rank $%
2 $ vector bundle over $Y$ such that there exists a non trivial exact
sequence $0\rightarrow L\rightarrow \mathcal{E}\rightarrow M\otimes \mathcal{%
I}_{W}\rightarrow 0$ where $\mathcal{I}_{W}$ is the ideal sheaf of a $0$%
-dimensional reduced subscheme $W\subset Y$ of length $w.$ Let $X=$ $\Bbb{P}(%
\mathcal{E)}$ and let $T$ be the tautological line bundle on $X.$

Assume that there exist integers $x\geq e+2$ and $z\geq 1$ such that:

1) $L(C_{0}+xf)>0,$ $M(C_{0}+xf)>2$

2) $Lf>0,$ $Mf>lm_{Y}(W)$

3) $h^{1}(Y,L-zC_{0}-zxf)=0,$ $h^{1}(Y,M-zC_{0}-zxf)=0$

4) $h^{1}(Y,L-zC_{0}-zxf-f)=0,$ $h^{1}(Y,M-zC_{0}-zxf-f)=0$

5) the support of $W$ is in general position with respect to the linear
systems $|M-z(C_{0}+xf)|$ and $|M-z(C_{0}+xf)-f|$ (i.e. the following
natural maps $H^{0}(Y,M-z(C_{0}+xf))\rightarrow
H^{0}(Y,(M-z(C_{0}+xf))\otimes \mathcal{O}_{W})$ and $%
H^{0}(Y,M-z(C_{0}+xf)-f)\rightarrow H^{0}(Y,(M-z(C_{0}+xf)-f)\otimes
\mathcal{O}_{W})$ are surjective)

6) $(L+M)(C_{0}+xf)\geq 2(z-1)(2x-e).$

Then $T$ is very ample.
\end{theorem}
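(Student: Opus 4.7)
The strategy adapts the template of Proposition \ref{cinque} from a surface to the threefold $X=\mathbb{P}(\mathcal{E})$. Given $\xi\in X^{[2]}$, I would build a reducible auxiliary surface $S=\pi^{-1}(\Gamma)\subset X$ with $\Gamma\equiv z(C_0+xf)$ passing through $\xi$, separate $\xi$ by sections of $T|_S$ obtained by glueing well-behaved local sections on each component of $S$, and finally lift such sections to $H^{0}(X,T)$ via a Leray-type vanishing. Conditions 1 and 2 control positivity on each component, conditions 3--5 handle the lifting step and absorb the role of $W$, and condition 6 is tailored for the glueing.

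\emph{Construction.} Treat first the generic case $\mathrm{Supp}(\xi)=\{P_1,P_2\}$ with $Q_i=\pi(P_i)$ distinct. Because $x\ge e+2$, the divisor $A\equiv C_0+xf$ is very ample, so by Lemma \ref{punti-tg} I can choose a smooth irreducible $\gamma_1\in|A|$ through $Q_1$ and $Q_2$, disjoint from $\mathrm{Supp}(W)$ and transverse to the fibres meeting $W$. Pick further smooth $\gamma_2,\dots,\gamma_z\in|A|$ in general position and set $\Gamma=\gamma_1+\cdots+\gamma_z$, $S=\pi^{-1}(\Gamma)$. Each component $\Sigma_i=\pi^{-1}(\gamma_i)$ is a rational scroll, since $\gamma_i\cong\mathbb{P}^{1}$, and $T|_{\Sigma_i}$ is the tautological bundle of $\mathbb{P}(\mathcal{E}|_{\gamma_i})$. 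Restricting the defining extension of $\mathcal{E}$ to $\gamma_1$ yields $0\to L|_{\gamma_1}\to\mathcal{E}|_{\gamma_1}\to M|_{\gamma_1}\to 0$ (as $\gamma_1$ avoids $W$), so $\mathcal{E}|_{\gamma_1}\cong\mathcal{O}(d_L)\oplus\mathcal{O}(d_M)$ with $d_L=L(C_0+xf)>0$ and $d_M=M(C_0+xf)>2$ by condition 1; this is enough positivity to separate $\xi$ by sections of $T|_{\Sigma_1}$. On the other components $\Sigma_i$, $i\ge 2$, which are disjoint from $\xi$, I would choose sections compatible with the separator on $\Sigma_1$ along the fibres $\Sigma_1\cap\Sigma_i$; condition 2 keeps the situation manageable in the presence of $W$ on these fibres.

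\emph{Lifting.} The cohomology sequence of the pair $(X,S)$ reduces the problem to showing $H^{1}(X,T-\pi^{*}\Gamma)=0$. By the projection formula, and since $R^{1}\pi_{*}T=0$ (fibres are $\mathbb{P}^{1}$ with $T|_{\mathbb{P}^{1}}=\mathcal{O}(1)$), this cohomology equals $H^{1}(Y,\mathcal{E}(-z(C_0+xf)))$. Feeding the defining extension of $\mathcal{E}$ into cohomology together with the ideal-sheaf sequence
\[
0\to (M-z(C_0+xf))\otimes\mathcal{I}_W\to M-z(C_0+xf)\to (M-z(C_0+xf))|_W\to 0,
\]
the desired vanishing follows from assumptions 3 and 5. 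The separating sections on $S$ then lift to $H^{0}(X,T)$ and separate $\xi$ on $X$.

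\emph{Tangent case and main obstacle.} For $\xi$ supported at a single point $P$ with tangent vector $\underline{q}\in\mathbb{T}_{P}(X)$, and for the case $Q_1=Q_2$, I would run the same argument with $\Gamma$ enlarged by the fibre $f_P$ of $\rho$ through $\pi(P)$, so that the divisor involved in the lifting becomes $z(C_0+xf)+f$; assumptions 4 and 5 then provide the necessary vanishing. The genuine difficulty, which I expect to be the heart of the proof, is the glueing step: one must show that the separating section of $T|_{\Sigma_1}$ produced above can be extended to a section of $T|_S$ by choosing compatible sections on every other $\Sigma_i$ along the $z-1$ fibres of $\pi$ forming $\Sigma_1\cap\Sigma_i$. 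This is precisely where assumption 6, the analogue of the degree bound $DA\ge(z-1)A^{2}+2p_a(A)+1$ in Proposition \ref{cinque}, is indispensable; it ensures that the degree of $T$ along the glueing fibres is large enough for the matching conditions to be satisfiable within each $H^{0}(\Sigma_i,T|_{\Sigma_i})$.
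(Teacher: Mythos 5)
Your outline follows the paper's proof almost exactly: the same decomposition $D\equiv z\pi^{*}\gamma$ (augmented by $\pi^{*}f$ in the vertical case), the same use of conditions 1) and 2) for very ampleness of $T$ on the components $\Gamma_{i}=\Bbb{P}(\mathcal{E}|_{\gamma_{i}})$ and on $F$, the same vanishings $h^{1}(Y,\mathcal{E}(-z\gamma))=0$ and $h^{1}(Y,\mathcal{E}(-z\gamma-f))=0$ obtained from 3)--5) for the lifting, and the same reading of 6) as the resource for gluing. However, the step you defer --- ``I expect [the gluing] to be the heart of the proof'' --- really is the heart of the proof, and leaving it as an expectation is a genuine gap. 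The mechanism is the following: each $\Gamma_{i}$ is itself a rational ruled surface and $\Gamma_{i}\cap\Gamma_{j}$ is a disjoint union of $2x-e$ fibres of $\pi$; having chosen separating sections $s_{1}^{k}$ on $\Gamma_{1}$, one must on each subsequent $\Gamma_{i}$ find a member of $|T_{|\Gamma_{i}}|$ passing smoothly through the $(i-1)(2x-e)$ points already prescribed on $\Gamma_{i}\cap(\Gamma_{1}\cup\dots\cup\Gamma_{i-1})$ and meeting the remaining intersection fibres transversally, so that the induction can continue. This is exactly Lemma \ref{punti-tg} applied to the surface $\Gamma_{i}$ with $A=T_{|\Gamma_{i}}$, and the numerical hypothesis it requires, $h^{0}(\Gamma_{i},T_{|\Gamma_{i}})=(L+M)(C_{0}+xf)+2\geq 2(z-1)(2x-e)$ (or $\geq 2[(z-1)(2x-e)+1]$ in the vertical case), is precisely assumption 6). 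Without invoking that interpolation lemma, or proving a statement of the same strength, the global section of $T_{|S}$ is not actually constructed.

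Two smaller points. You cannot in general take $\gamma_{1}$ disjoint from $\mathrm{Supp}(W)$: if $\pi(P_{1})$ or $\pi(P_{2})$ lies in $W$ the curve is forced through those points. The correct statement --- and the reason condition 1) reads $M(C_{0}+xf)>2$ rather than $>0$ --- is that $\gamma_{1}$ can be chosen to contain at most two points of $W$, because Lemma \ref{nolines} excludes $|C_{0}+xf|$-lines other than fibres, so the curve through two assigned points is not forced through further base points; the restricted extension then reads $0\to\mathcal{O}(d_{L}+\varepsilon)\to\mathcal{E}|_{\gamma_{1}}\to\mathcal{O}(d_{M}-\varepsilon)\to 0$ with $\varepsilon\leq 2$, and 1) still gives ampleness. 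Second, in the non-reduced case with $\underline{q}\notin\Bbb{T}_{P}(F)$ you must choose $\gamma_{1}$ so that $\underline{q}\in\Bbb{T}_{P}(\Gamma_{1})$ (the paper does this via the very ampleness of $|C_{0}+xf|$); merely enlarging $\Gamma$ by the fibre $f_{P}$ does not make $S$ contain $\xi$ scheme-theoretically unless the horizontal component is tangent to the right direction.
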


\begin{proof}
Let us consider the linear system $|C_{0}+xf|$ on $Y$ and notice that it is
very ample by the assumption $x\geq e+2$ and \cite{h}, pag.169. Moreover
there are no $|C_{0}+xf|-lines$ by Lemma \ref{nolines} (other than fibres).
Let $\gamma \in |C_{0}+xf|$ be a smooth rational curve passing through at
most $2$ points of $W.$ Let us consider the restriction $\mathcal{E}
_{|\gamma }$. By assumption 1) $\mathcal{E}_{|\gamma }$ is ample, hence very
ample because $\gamma $ is a smooth rational curve.

A similar argument shows that, for any fibre $\mathit{f}$ of the ruling of $%
Y,$ the restriction $\mathcal{E}_{|\mathit{f}}$ is also very ample, by
assumption 2).

Let us consider the following exact sequences:\newline
$0\rightarrow L-z\gamma \rightarrow \mathcal{E}\otimes \mathcal{O}
_{Y}(-z\gamma )\rightarrow (M-z\gamma )\otimes \mathcal{I}_{W}\rightarrow 0,$%
\newline
$0\rightarrow (M-z\gamma )\otimes \mathcal{I}_{W}\rightarrow M-z\gamma
\rightarrow (M-z\gamma )\otimes \mathcal{O}_{W}\rightarrow 0.$

By using assumptions 3) and 5) it is easy to see that $h^{1}(Y,\mathcal{E}%
\otimes \mathcal{O}_{Y}(-z\gamma ))=0.$

Let us consider the following exact sequences:\newline
$0\rightarrow L-z\gamma -f\rightarrow \mathcal{E}\otimes \mathcal{O}
_{Y}(-z\gamma -f)\rightarrow (M-z\gamma -f)\otimes \mathcal{I}%
_{W}\rightarrow 0$, $0\rightarrow (M-z\gamma -f)\otimes \mathcal{I}
_{W}\rightarrow M-z\gamma -f\rightarrow (M-z\gamma -f)\otimes \mathcal{O}%
_{W}\rightarrow 0.$

By using assumptions 4) and 5) it is easy to see that $h^{1}(Y,\mathcal{E}%
\otimes \mathcal{O}_{Y}(-z\gamma -f))=0.$

Given a divisor $D\equiv $ $z\pi ^{*}\gamma $ on $X$ and the exact sequence
\newline
$0\rightarrow T-D\rightarrow T\rightarrow T\otimes \mathcal{O}%
_{D}\rightarrow 0,$ the above vanishings give that the natural map
\begin{equation}
|T|\rightarrow |T_{|D}|\to 0  \label{surjectivityD}
\end{equation}
is surjective. A similar argument shows that if $D\equiv z\pi ^{*}\gamma
+\pi ^{*}f,$ the map in (\ref{surjectivityD}) is surjective too.

To show that $T$ is very ample we have to prove that $|T|$ separates all
zero schemes $\xi \subset X^{[2]}.$ The proof is divided into two cases,
according to the nature of $\xi .$ When $\xi $ is not reduced, and $\mathrm{%
\ Supp}(\xi )=\{P\},$ we denote by $\underline{q}\in \Bbb{T}_{P}(X)$ the
tangent direction specified by $\xi .$

\textbf{Case 1:} $P\in \mathrm{Supp}(\xi )$ and $\xi \nsubseteq F$ (scheme
theoretically), where $F=\Bbb{P}(\mathcal{E}_{|f})$ and $f=\rho ^{-1}(\rho
(\pi (P))$.

If $\mathrm{Supp}(\xi )=\{P,Q\},$ as $|C_{0}+xf|$ is very ample, \cite{bs}
Theorem 1.7.9, and Lemma \ref{nolines} imply that there exists a smooth
rational curve $\gamma _{1}\in |C_{0}+xf|$ containing $\pi (P)$ and $\pi
(Q). $ If $\xi $ is not reduced, again very ampleness of $|C_{0}+xf|,$ \cite
{bs} Theorem 1.7.9, and Lemma \ref{nolines} imply that there exists a smooth
rational curve $\gamma _{1}\in |C_{0}+xf|$ passing through $\pi (P),$ with $%
\underline{q}\in \Bbb{T}_{P}(\Gamma _{1}),$ where $\Gamma _{1}=\Bbb{P}(%
\mathcal{E}_{|\gamma _{1}}).$ By choosing suitably $\gamma _{1}$ we can
assume that $\gamma _{1}$ contains at most two of the points of $W$ ($\pi
(P) $ and $\pi (Q)$ if it is the case): recall that there are no curves,
other than fibres, embedded as lines by $|C_{0}+xf|$ thanks to Lemma \ref
{nolines}. Choose another smooth rational curve $\gamma _{2}\in |C_{0}+xf|$
such that $\gamma _{2}$ intersects $\gamma _{1}$ transversely at $%
(C_{0}+xf)^{2}=2x-e$ points all different from $\pi (\mathrm{Supp}(\xi ))$
and not belonging to $W.$ Now choose another smooth rational curve $\gamma
_{3}\in |C_{0}+xf|$ such that $\gamma _{3}$ and $\Gamma _{3}$ have the same
above properties with respect to $\gamma _{1},\gamma _{2,}\Gamma _{1},\Gamma
_{2},W$ and moreover $\bigcap_{i=1}^{3}\gamma _{i}=\emptyset .$ Iterate the
process, until $i=z.$ Note that the suitable choice of the $\gamma _{i}$ is
possible simply because $|C_{0}+xf|$ is very ample on $Y.$

Let $\Gamma _{i}$ also denote the numerical classes of $\Gamma _{i}$ in $%
\text{\textrm{Num}}(X).$ If $z=1$ we can separate $\xi $ on $\Gamma _{1}$
because $T_{|\Gamma _{1}}$ is very ample, then we are done by lifting the
separating sections with the map in (6). If $z\geq 2$ we need to apply Lemma
\ref{punti-tg} to $T_{|\Gamma _{i}}$ when $i\geq 2.$ Let us remark that $%
h^{0}(\Gamma _{i},T_{|\Gamma _{i}})=$ $h^{0}(\Gamma _{2},T_{|\Gamma
_{2}})=(L+M)\gamma _{2}+2=(L+M)(C_{0}+xf)+2$ for $i=2,...,z,$ so that
assumption 6) implies that $h^{0}(\Gamma _{i},T_{|\Gamma _{i}})\geq
2(z-1)(2x-e)$ for any $i=2,...,z.$ Noticing that $\Gamma _{i}\cap \Gamma
_{j} $ is the disjoint union of $2x-e$ fibres of $\pi ,$ we can proceed as
follows: $T_{|\Gamma _{1}}$ is very ample, so we can take $s_{1}^{1}\in
|T_{|\Gamma _{1}}|$ and $s_{1}^{2}\in |T_{|\Gamma _{1}}|$ separating $\xi $
on $\Gamma _{1}$. We can also choose $s_{1}^{1}$ and $s_{1}^{2}$ such that
they intersect transversely all the fibres $\Gamma _{1}\cap \Gamma _{j}$ $%
j=2,...,z.$ As $h^{0}(\Gamma _{2},T_{|\Gamma _{2}})\geq 2(2x-e),$ by Lemma
\ref{punti-tg}, we can choose $s_{2}^{1}\in |T_{|\Gamma _{2}}|$ and $%
s_{2}^{2}\in |T_{|\Gamma _{2}}|$ such that $s_{1}^{k}\cap \Gamma
_{2}=s_{2}^{k}\cap \Gamma _{1}$, for $k=1,2,$ and such that they intersect
transversely all the fibres $\Gamma _{2}\cap \Gamma _{j}$ $j=3,...,z.$ As $%
h^{0}(\Gamma _{3},T_{|\Gamma _{3}})\geq 4(2x-e),$ by Lemma \ref{punti-tg},
we can choose $s_{3}^{1}\in |T_{|\Gamma _{3}}|$ and $s_{3}^{2}\in
|T_{|\Gamma _{3}}|$ such that $s_{3}^{k}\cap \Gamma _{2}=s_{2}^{k}\cap
\Gamma _{3}$, and $s_{3}^{k}\cap \Gamma _{1}=s_{1}^{k}\cap \Gamma _{3}$ for $%
k=1,2,$ and such that they intersect transversely all the fibres $\Gamma
_{3}\cap \Gamma _{j}$ $j=4,...,z.$ And so on. At the end we get, for each $%
k=1,2$, a set of $z$ elements $s_{i}^{k}\in |T_{|\Gamma _{i}}|$ $i=1,..,z$
such that for any $i,j\in \{1,...,z\},$ $i\neq j,$ $s_{i}^{k}\cap \Gamma
_{j}=s_{j}^{k}\cap \Gamma _{i}$ is a reduced zero-subscheme, and $%
\{s_{1}^{1},s_{1}^{2}\}$ separate $\xi $ on $\Gamma _{1}.$ Thus, for each $%
k=1,2,$ the $z$-tuples $\{s_{i}^{k}\in |T_{|\Gamma _{i}}|$ $i=1,...,z\}$
give rise to elements $\overline{s}^{k}$ of $|T_{|(\Gamma _{1}\cup ....\cup
\Gamma _{z})}|$ $=|T_{|(\Gamma _{1}+...+\Gamma _{z})}|$ which still separate
$\xi $ on $\Gamma _{1}\cup ...\cup \Gamma _{z}.$ As $\Gamma _{1}+...+\Gamma
_{z}\equiv z\pi ^{*}\gamma ,$ (\ref{surjectivityD}) gives elements $s^{k}$
of $|T|$ that separate $\xi $ on $X.$

\textbf{Case 2:} $P\in \mathrm{Supp}(\xi )$ and $\xi \subset F$ (scheme
theoretically), where $F=\Bbb{P}(\mathcal{E}_{|f})$ and $f=\rho ^{-1}(\rho
(\pi (P))$.

Let $F$ also denote the numerical class of $\Bbb{P}(\mathcal{E}_{|f})$ in $%
\text{\textrm{Num}}(X).$ As in the previous case, let us choose $z$ smooth
rational curves $\gamma _{1},...,\gamma _{z}\in |C_{0}+xf|$ such that they
intersect each other transversely at $(C_{0}+xf)^{2}=2x-e$ points, each of
them intersects $f$ transversely at one point different from $\pi (\mathrm{%
Supp}(\xi )),$ not belonging to $W,$ and such that any $3$-tuple of curves
in $\{\gamma _{1},...,\gamma _{z},\mathit{f}\}$ has empty intersection. Note
that all the suitables choices ar possible because $|C_{0}+xf|$ is very
ample.

Let $\Gamma _{i}$ also denote the numerical class of $\Gamma _{i}=\Bbb{P}(%
\mathcal{E}_{|\gamma _{i}})$ in $\text{\textrm{Num}}(X).$ We get the same
results about $h^{0}(\Gamma _{i},T_{|\Gamma _{i}})$ as in Case 1 and we have
$h^{0}(\Gamma _{i},T_{|\Gamma _{i}})=(L+M)(C_{0}+xf)+2$ for any $i=1,...,z.$

As $T_{|F}$ is very ample we can take $s_{0}^{1}\in |T_{|F}|$ and $%
s_{0}^{2}\in |T_{|F}|$ separating $\xi $ on $F$. We can also choose $%
s_{0}^{1}$ and $s_{0}^{2}$ such that they intersect transversely all the
fibres $F\cap \Gamma _{j}$ $j=1,...,z.$ Then we can choose $s_{1}^{1}\in
|T_{|\Gamma _{1}}|$ and $s_{1}^{2}\in |T_{|\Gamma _{1}}|$ such that $%
s_{1}^{k}\cap F=s_{0}^{k}\cap \Gamma _{1}$, for $k=1,2,$ and such that they
intersect transversely all the fibres $\Gamma _{1}\cap \Gamma _{j}$ $%
j=2,...,z.$ As $h^{0}(\Gamma _{2},T_{|\Gamma _{2}})\geq 2[(2x-e)+1],$ by
Lemma \ref{punti-tg}, we can choose $s_{2}^{1}\in |T_{|\Gamma _{2}}|$ and $%
s_{2}^{2}\in |T_{|\Gamma _{2}}|$ such that $s_{1}^{k}\cap \Gamma
_{2}=s_{2}^{k}\cap \Gamma _{1}$, $s_{2}^{k}\cap F=s_{0}^{k}\cap \Gamma _{2}$
for $k=1,2,$ and such that they intersect transversely all the fibres $%
\Gamma _{3}\cap \Gamma _{j}$ $j=3,...,z.$ and so on. At the end, by
recalling that assumption 6) gives $h^{0}(\Gamma _{i},T_{|\Gamma _{i}})\geq
2[(z-1)(2x-e)+1],$ for any $i=1,...,z,$ we get a $(z+1)$-tuples $%
\{s_{i}^{k}, $ $i=0,1,...,z\}$ for $k=1,2,$ giving rise to two elements $%
\overline{s}^{k}\in |T_{|(F\cup \Gamma _{1}\cup ...\cup \Gamma _{z})}|$ $%
=|T_{|(F+\Gamma _{1}+...+\Gamma _{z}}|$ that separate $\xi $ on $F\cup
\Gamma _{1}\cup ...\cup \Gamma _{z}.$ As $\Gamma _{1}+...+\Gamma
_{z}+F\equiv z\pi ^{*}\gamma +\pi ^{*}f,$ (\ref{surjectivityD}) gives
elements $s^{k}\in |T|$ that separate $\xi $ on $X.$
\end{proof}

\begin{remark}
\label{remnum} Note that the previous criterion is purely numerical because
condition 5) can be translated into a vanishing condition as we have seen in
the proof of Theorem \ref{Valma}.
\end{remark}

To show the large validity of Theorem \ref{Valma} we give the following
example.

\textbf{Example:} let us consider on $Y=\mathbf{F}_{e}$ two line bundles $%
L\equiv aC_{0}+b_{l}f$ and $M\equiv (a+2)C_{0}+b_{m}f$ with $a>0.$ Let us
consider $K_{Y}\equiv -2C_{0}-(2+e)f$ so that $K_{Y}+M-L\equiv
(b_{m}-b_{l}-e-2)f.$ Let us assume $b_{m}-b_{l}-e-2>0$ and let us choose $%
W=\{$two distinct points on a fixed fibre $\overline{f}\},$ hence $w=2.$ In
this way we can apply Griffiths-Harris theorem on the existence of rank $2$
vector bundles on surfaces (because every section of $|K_{Y}+M-L|$ passing
through a point of $W,$ passes also through the other point, see \cite{dl},
Theorem 3.13 and its proof) and we get an exact sequence as the following: $%
0\rightarrow \mathcal{O}_{Y}\rightarrow \mathcal{E}^{\prime }\rightarrow
(M-L)\otimes \mathcal{I}_{W}\rightarrow 0.$

By tensorizing with $L$ we get our vector bundle $\mathcal{E}$:

\begin{center}
$0\rightarrow L\rightarrow \mathcal{E}\rightarrow M\otimes \mathcal{I}%
_{W}\rightarrow 0.$
\end{center}

Let us fix $x=e+2$ and $z=a+1$ and let us write down all necessary numerical
conditions to satisfy the assumptions of Theorem \ref{Valma}.

1) $2a+b_{l}>0$ and $2a+4+b_{m}>2$.

2) $a>0.$

3) $b_{m}-(a+1)(e+2)\geq e-1$.

4) $b_{m}-(a+1)(e+2)-1\geq e-1$.

5) $b_{m}-(a+1)(e+2)-1\geq e$ and $b_{m}-(a+1)(e+2)-2\geq e$.

6) $b_{l}+b_{m}\geq 4a+2ae-4.$

By looking at all required conditions in theorem \ref{Valma} one can show
that, for fixed $e,a>0,b_{l}>-2a,$ all assumptions are satisfied for $%
b_{m}>>0,$ thus obtaining the very ampleness of a large class vector bundles
$\mathcal{E}.$ These bundles give rise to $3$-folds scrolls embedded with
fairly large degree (see Remark \ref{deg11g6onP2} and Section 7). For
instance, if $d:=\deg (X),$ we have $d=c_{1}(\mathcal{E})^{2}-c_{2}(\mathcal{%
E})=(3a+4)(b_{m}+b_{l})-2b_{l}-e(3a^{2}+6a+4)-2,$ and, if we put $e=a=1,$ $%
b_{l}=-1,b_{m}=9$, we get $d=43.$

\section{A criterion for ruled surfaces on elliptic curves}

In this section we want to give some very ampleness criterion when the
surface $Y$ is a ruled surface on an elliptic curve, i.e. $Y=\Bbb{P}(%
\mathcal{F}),$ where $\mathcal{F}$ is a normalized rank $2$ vector bundle
over a smooth elliptic curve $C$ with invariant $e.$ Normalized means that $%
h^{0}(C,\mathcal{F})\neq 0,$ but $h^{0}(C,\mathcal{F}\otimes \mathcal{L})=0$
for any linear bundle $\mathcal{L}$ on $C$ of negative degree. As usual, let
$\rho :Y\rightarrow C$ be the natural map.

To get our criterion, in this case we will need stronger assumptions. Let us
begin with the following.

\begin{lemma}
\label{punti-tge} Let $Y$ be an elliptic ruled surface as above over an
elliptic curve $C.$ Let $A\equiv C_{0}+bf$ a very ample divisor on $Y$
(where $C_{0}$ and $f$ are, respectively, the numerical classes of the
tautological line bundle and of the fibre). Let $P_{1},...,P_{r}$ be $r\geq
2 $ distinct points on $Y$ such that no two of them lie on the same fibre.
Let $F:=$ $\{f_{P_{1}},f_{P_{2}},...,f_{P_{r}}\}$ where $f_{P_{i}}=\rho
^{*}\rho (P_{i}).$ If $h^{0}(Y,A)\geq 2r+2$ then there exists at least an
element $\gamma \in |A|$ passing smoothly through $P_{1},...,P_{r}$,
intersecting any $f_{P_{i}}$ tranversely and intersecting transversely any
other finite set of fibres $\Phi :=$ $\{\varphi _{j}\}$ with $F\cap \Phi
=\emptyset .$
\end{lemma}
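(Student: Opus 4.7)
The plan is to mimic the proof of Lemma \ref{punti-tg}, tracking where the fact that $C$ has genus one (rather than zero) forces the stronger hypothesis $h^0(Y,A) \geq 2r+2$. First treat $\Phi = \emptyset$: at each $P_i$ choose $\underline{q}_i \in \Bbb{T}_{P_i}(Y)$ not lying along $\Bbb{T}_{P_i}(f_{P_i})$; the conditions of passing through each $P_i$ with tangent direction $\underline{q}_i$ are $2r$ linear conditions on $H^0(Y,A)$, so the hypothesis $h^0(Y,A) \geq 2r+2$ forces the resulting subsystem $\Lambda \subset |A|$ to have projective dimension at least $1$. If a generic $\gamma \in \Lambda$ is smooth at every $P_i$ we are done by Bertini; otherwise, arguing as in Lemma \ref{punti-tg}, $\gamma$ must decompose as $\gamma' + \sum_{i \in I} b_i f_{P_i}$ with $\gamma'$ smooth at each $P_i$ with tangent $\underline{q}_i$, and one tries to replace the fibre components by a linearly equivalent effective divisor whose fibres avoid all $\rho(P_j)$. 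The extension to a nonempty $\Phi$ disjoint from $F$ is then handled by the same replacement-plus-Bertini argument.

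The elliptic obstruction is precisely in this replacement step. On a ruled surface over the elliptic $C$, $\rho^* P \sim \rho^* Q$ in $\mathrm{Pic}(Y)$ if and only if $P \sim Q$ in $\mathrm{Pic}(C)$, and since $C$ has genus one this forces $P = Q$: fibres over distinct points of $C$ are not linearly equivalent. To replace $\sum_{i\in I} b_i f_{P_i}$ by $\sum_{i\in I} b_i f'_i$ in the same linear class with the $f'_i$ over points distinct from every $\rho(P_j)$, one needs an effective divisor $\sum b_i Q_i \sim \sum b_i \rho(P_i)$ on $C$ avoiding the finite set $\{\rho(P_1),\ldots,\rho(P_r)\}$. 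By Riemann--Roch on the elliptic curve, $|\sum b_i \rho(P_i)|$ has projective dimension $\sum b_i - 1$, exactly one less than on $\Bbb{P}^1$; this lost dimension, together with the need for $\dim \Lambda \geq 1$ rather than just $\Lambda \neq \emptyset$ as in the rational case, accounts for the two extra dimensions in the hypothesis.

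The main obstacle is the degenerate case $\sum b_i = 1$, where a single fibre $f_{P_{i_0}}$ appears with multiplicity one and $|\rho(P_{i_0})|$ on elliptic $C$ is zero-dimensional, so no fibre replacement exists. This is resolved by exploiting $\dim \Lambda \geq 1$: if every element of $\Lambda$ were forced to contain $f_{P_{i_0}}$ as a component, then $f_{P_{i_0}}$ would be a fixed component of $\Lambda$, so one could write $\Lambda = f_{P_{i_0}} + \Lambda'$ with $\Lambda' \subset |A - f_{P_{i_0}}|$ of the same positive dimension; a generic $\gamma' \in \Lambda'$ produced by Bertini is smooth at each $P_i$ with tangent $\underline{q}_i$ and does not contain $f_{P_{i_0}}$, contradicting the assumption. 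The transversality to an additional disjoint set $\Phi$ of fibres is enforced throughout as an open genericity condition within this construction.
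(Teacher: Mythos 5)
Your overall strategy matches the paper's: impose the $2r$ tangency conditions, use $h^{0}(Y,A)\geq 2r+2$ to get a pencil $\Lambda$, decompose a bad element as $\gamma'+\sum b_i f_{P_i}$, and move the fibre part inside its linear equivalence class using that $|\sum b_i\rho(P_i)|$ on the elliptic curve $C$ has dimension $\sum b_i-1$ (base-point free for $\sum b_i=2$, very ample for $\sum b_i\geq 3$). You also correctly isolate the genuine difficulty: when $\sum b_i=1$ the system $|\rho(P_{i_0})|$ is a single point, so the fibre $f_{P_{i_0}}$ cannot be moved at all.

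However, your resolution of that case is not a proof. Writing $\Lambda=f_{P_{i_0}}+\Lambda'$ and observing that a generic $\gamma'\in\Lambda'$ is smooth at each $P_i$ with tangent $\underline{q}_i$ and does not contain $f_{P_{i_0}}$ contradicts nothing: the corresponding element of $\Lambda$ is $\gamma'+f_{P_{i_0}}$, which still contains $f_{P_{i_0}}$ and is still singular at $P_{i_0}$ (both $\gamma'$ and the fibre pass through $P_{i_0}$), so the hypothesis that every element of $\Lambda$ contains $f_{P_{i_0}}$ as a component remains perfectly consistent. Moreover $\gamma'$ itself lies in $|A-f_{P_{i_0}}|$, not in $|A|$, and since on an elliptic base the only effective divisor in $|f_{P_{i_0}}|$ is $f_{P_{i_0}}$ itself, there is no way to complete $\gamma'$ to an admissible element of $|A|$. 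The missing idea, which is the paper's, is to spend the spare dimension of the pencil differently: pick an auxiliary point $P'\in f_{P_2}$, $P'\neq P_2$ (this is where $r\geq 2$ is used), and find $\overline{\gamma}\in|A|$ satisfying the $2r$ conditions and passing through $P'$ as well. Since $\overline{\gamma}\cdot f_{P_2}=1$ and $\overline{\gamma}$ meets $f_{P_2}$ at both $P_2$ and $P'$, it must contain $f_{P_2}$, hence $\overline{\gamma}=\overline{\gamma'}+f_{P_1}+f_{P_2}$. Now the fibre part has degree $2$ over $C$, the pencil $|\rho(P_1)+\rho(P_2)|$ is base-point free, and the replacement argument goes through. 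Without this (or some equivalent device) the $\sum b_i=1$ case, and hence the lemma, is not established.
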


\begin{proof}
Firstly, let us assume that $\Phi $ is empty. For any point $P_{i}$ let us
choose a vector \underline{$q$}$_{i}\in \Bbb{T}_{P_{i}}(Y)$ pointing towards
a direction different from that of $f_{P_{i}}$. By assumption, there exists
at least a pencil of elements $\gamma \in |A|$ passing through $%
P_{1},...,P_{r}$ and having tangent direction \underline{$q$}$_{i}$ at $%
P_{i} $ $i=1,...,r$ ($2r$ linear conditions). If there exists at least an
element $\gamma \in |A|$ passing smoothly through $P_{1},...,P_{r}$ and
cutting any $f_{P_{i}}$ tranversally we are done. Otherwise every $\gamma $
is singular at some points, say, $P_{1},...,P_{s}$ and therefore every $%
\gamma $ must be reducible into an element $\gamma ^{\prime }\in
|A-b_{1}f_{P_{1}}-b_{2}f_{P_{2}}-...-b_{s}f_{P_{s}}|$ and the union of
fibres $f_{P_{1}},f_{P_{2}},...,f_{P_{s}}$ with multiplicities $%
b_{1},b_{2},...,b_{s}$ (see also the proof of Proposition \ref{due}). Now
the generic $\gamma ^{\prime }$ is smooth at any $P_{i}$ and, by our
assumptions, $\gamma ^{\prime }$ has tangent vector \underline{$q$}$_{i}$ at
any $P_{i}.$

Let $\rho :Y\rightarrow C$ be the natural projection. If $%
b:=b_{1}+b_{2}+...+b_{s}\geq 3$, we can choose other $b$ generic points $%
H_{1},...,H_{b}\in C$ (not necessarily distinct) such that $%
f_{H_{1}}+f_{H_{2}}+...+f_{H_{b}}$ is linearly equivalent to $%
b_{1}f_{P_{1}}+b_{2}f_{P_{2}}+...+b_{s}f_{P_{s}}$. This is possible because $%
b_{1}\rho (P_{1})+b_{2}\rho (P_{2})+...+b_{s}\rho (P_{s})$ is a very ample
divisor on $C,$ so that it suffices to choose a generic divisor $%
H_{1}+...+H_{b}$ (disjoint with $b_{1}\rho (P_{1})+b_{2}\rho
(P_{2})+...+b_{s}\rho (P_{s})$) in the linear system $|b_{1}\rho
(P_{1})+b_{2}\rho (P_{2})+...+b_{s}\rho (P_{s})|.$ Now we can consider an
element $\gamma ^{\prime }+f_{H_{1}}+f_{H_{2}}+...+f_{H_{b}}\in |A|$ and we
are done.

If $b:=b_{1}+b_{2}+...+b_{s}=2$ we can choose other $2$ generic points $%
H_{1},H_{2}\in C$ (not necessarily distinct) such that $f_{H_{1}}+f_{H_{2}}$
is linearly equivalent to $b_{1}f_{P_{1}}+b_{2}f_{P_{2}}+...+b_{s}f_{P_{s}}$
. This is possible because $b_{1}\rho (P_{1})+b_{2}\rho
(P_{2})+...+b_{s}\rho (P_{s})$ is a degree $2$ effective divisor on $C,$
hence $|b_{1}\rho (P_{1})+b_{2}\rho (P_{2})+...+b_{s}\rho (P_{s})|$ is a one
dimensional linear system without base points and it suffices to choose a
generic divisor $H_{1}+H_{2}$ (disjoint with $b_{1}\rho (P_{1})+b_{2}\rho
(P_{2})+...+b_{s}\rho (P_{s})$) in it. Now we can consider an element $%
\gamma ^{\prime }+f_{H_{1}}+f_{H_{2}}\in |A|$ and we are done.

If $b:=b_{1}+b_{2}+...+b_{s}=1$, say $b_{1}=1$ and $b_{2}=...=b_{s}=0,$
every $\gamma $ must be reducible into an element $\gamma ^{\prime }\in
|A-f_{P_{1}}|$ and the fibre $f_{P_{1}}.$ Because there exists at least a
pencil of elements $\gamma \in |A|$ passing through $P_{1},...,P_{r}$ and
having tangent direction \underline{$q$}$_{i}$ at $P_{i}$ $i=1,...,r,$ we
can choose another point $P^{\prime }\in f_{P_{2}},$ $P^{\prime }\neq P_{2}$
(recall that $r\geq 2$) and we can find at least an element $\overline{%
\gamma }\in |A|$ passing through $P_{1},...,P_{r},P^{\prime }$ and having
tangent direction \underline{$q$}$_{i}$ at $P_{i}$ $i=1,...,r.$ We have that
$\overline{\gamma }$ is reducible into an element $\overline{\gamma ^{\prime
}}\in |A-f_{P_{1}}-f_{P_{2}}|$ and the two fibres $f_{P_{1}},f_{P_{2}}$. As
above, $\overline{\gamma ^{\prime }}$ is smooth at any $P_{i}$ and, by our
assumptions, $\overline{\gamma ^{\prime }}$ has tangent vector \underline{$q$%
}$_{i}$ at any $P_{i},$ moreover, as above, we can choose a generic divisor $%
H_{1}+H_{2}$ (disjoint with $\rho (P_{1})+\rho (P_{2})$) such that $%
f_{H_{1}}+f_{H_{2}}$ is linearly equivalent to $f_{P_{1}}+f_{P_{2}}$. Now we
can consider the element $\overline{\gamma ^{\prime }}+f_{H_{1}}+f_{H_{2}}%
\in |A|$ and we are done.

Now let us assume that $\Phi $ is not empty, with $j=1,2,...,p,$ and let us
proceed as in the previous case. If $\gamma ^{\prime }$ (or $\overline{
\gamma ^{\prime }}$ ) intersects every fibre of $\Phi $ tranversally we are
done. Otherwise $\gamma ^{\prime }$ (or $\overline{\gamma ^{\prime }}$ )
must be reducible into an element $\gamma ^{\prime \prime }\in
|A-b_{1}\varphi _{1}-b_{2}\varphi _{2}-...-b_{p}\varphi _{p}|$ and the union
of fibres $\varphi _{1},\varphi _{2},...,\varphi _{p}$ with multiplicities $%
b_{1},b_{2},...,b_{p}$. One can then argue as in the previous case. Note
that, as all our choices are made by using generic points $H_{i}$ on $C,$ we
can avoid any fixed set of points on $C$.
\end{proof}

Now let us return to the surface $Y=\Bbb{P}(\mathcal{F}).$ It is well known
that, if $\mathcal{F}$ is indecomposable, hence semistable, then $0\leq
-e:=\deg (\mathcal{F})=\deg [c_{1}(\mathcal{F})]\leq 1;$ if $\mathcal{F}$ is
decomposable then $\mathcal{F}=\mathcal{O}_{C}\oplus \overline{\mathcal{L}}$
where $\overline{\mathcal{L}}$ is a line bundle of degree $-e\leq 0$; (see%
\cite{h}, V, Theorem 2.12 and 2.15). Moreover if we compute the invariant $%
\mu ^{-}(\mathcal{F})$ (see \cite{bu} for the definition) we have that $\mu
^{-}(\mathcal{F})=-\frac{e}{2}$ in the first case and $\mu ^{-}(\mathcal{F}%
)=-e$ in the second case.

We have the following version of Theorem \ref{Valma} for elliptic ruled
surfaces:

\begin{theorem}
\label{Valmae} Let $Y=\Bbb{P}(\mathcal{F})$ be a surface as above. Let $%
L\equiv a_{l}C_{0}+b_{l}f$, $M\equiv a_{m}C_{0}+b_{m}f$ be line bundles over
$Y.$ Let $\mathcal{E}$ be a rank $2$ vector bundle over $Y$ such that there
exists an exact sequence $0\rightarrow L\rightarrow \mathcal{E}\rightarrow
M\otimes \mathcal{I}_{W}\rightarrow 0$ where $\mathcal{I}_{W}$ is the ideal
sheaf of a $0$-dimensional reduced scheme $W\subset Y$ of length $w.$ Let $X=%
\Bbb{P}(\mathcal{E})$ and let $T$ be the tautological line bundle on $X.$

Assume that there exist integers $x$ and $z\geq 1$ such that:

\textit{0)} $x+\mu ^{-}(\mathcal{F})\geq 3$ $(or$ $x+\mu ^{-}(\mathcal{F})>1$
$if$ $\mathcal{F}$ $has$ $degree$ $1)$

\textit{1)} $\min \{xa_{l}+b_{l}-a_{l}e,xa_{m}+b_{m}-a_{m}e-2\}\geq 3$

\textit{2)} $Lf>0,$ $Mf>lm_{Y}(W)$

\textit{3)} $h^{1}(Y,L-zC_{0}-zxf)=0,$ $h^{1}(Y,M-zC_{0}-zxf)=0$

\textit{4)} $h^{1}(Y,L-zC_{0}-zxf-f)=0,$ $h^{1}(Y,M-zC_{0}-zxf-f)=0$

\textit{5)} the support of $W$ is in general position with respect to the
linear systems $|M-z(C_{0}+xf)|$ and $|M-z(C_{0}+xf)-\rho ^{*}P|$ (i.e. the
following natural maps $H^{0}(Y,M-z(C_{0}+xf))\rightarrow
H^{0}(Y,(M-z(C_{0}+xf))\otimes \mathcal{O}_{W})$ and $H^{0}(Y,M-z(C_{0}+xf)-%
\rho ^{*}P)\rightarrow H^{0}(Y,(M-z(C_{0}+xf)-\rho ^{*}P)\otimes \mathcal{O}%
_{W})$ are surjective), where $P$ is any point of $C$

\textit{6)} $(a_{l}+a_{m})(x-e)+b_{l}+b_{m}\geq 2(z-1)[(2x-e)+1]+2.$

Then $T$ is very ample.
\end{theorem}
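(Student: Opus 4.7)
The plan is to run the proof of Theorem~\ref{Valma} with the rational base curves replaced by elliptic ones, keeping track of how this forces the slightly stronger numerical hypotheses. First I would verify that the linear system $|C_0+xf|$ is very ample on $Y$: this is exactly what hypothesis~\textit{0)} ensures via the Butler-type criterion in terms of $\mu^-(\mathcal{F})$ (using the two cases depending on whether $\mathcal{F}$ is indecomposable of degree $1$ or not). Since the base curve has genus $g=1\ne 0$, Lemma~\ref{nolines} then tells us that the only lines in this embedding are the fibres $f$, which is crucial for making the incidence choices below unobstructed.

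Next I would pick a smooth elliptic curve $\gamma\in|C_0+xf|$ passing through at most two points of $W$ and check that the restriction $\mathcal{E}_{|\gamma}$ is very ample. Restricting the defining sequence $0\to L\to\mathcal{E}\to M\otimes\mathcal{I}_W\to 0$ to $\gamma$, one reads $\deg L_{|\gamma}=xa_l+b_l-a_le$ and $\deg(M\otimes\mathcal{I}_W)_{|\gamma}\ge xa_m+b_m-a_me-2$; by hypothesis~\textit{1)} both degrees are $\ge 3$, and any extension of line bundles of degree $\ge 3$ on an elliptic curve is very ample. The fibrewise very ampleness of $\mathcal{E}_{|f}$ follows analogously from hypothesis~\textit{2)}. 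Then, exactly as in the rational case, hypotheses~\textit{3)}, \textit{4)}, \textit{5)} together with the standard two-step chase on the extension sequence give the vanishings $h^1(Y,\mathcal{E}\otimes\mathcal{O}_Y(-z\gamma))=0$ and $h^1(Y,\mathcal{E}\otimes\mathcal{O}_Y(-z\gamma-f))=0$, so that the restriction maps
\[
|T|\twoheadrightarrow |T_{|D}|
\]
are surjective for $D\equiv z\pi^*\gamma$ and for $D\equiv z\pi^*\gamma+\pi^*f$.

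Given $\xi\in X^{[2]}$, I would then split into the two cases of the previous proof: \textbf{Case 1}, where $\xi$ is not contained scheme-theoretically in a fibre of $\pi$, and \textbf{Case 2}, where $\xi\subset F=\mathbb{P}(\mathcal{E}_{|f})$. In Case~1 I would pick a configuration of $z$ smooth elliptic curves $\gamma_1,\dots,\gamma_z\in|C_0+xf|$ with $\gamma_1$ adapted to $\pi(\xi)$, the pairwise intersections transverse and disjoint from $W$, and no triple intersection; in Case~2 the same setup is augmented with the fibre $f$ through $\pi(P)$. On $\Gamma_1=\mathbb{P}(\mathcal{E}_{|\gamma_1})$ (respectively on $F$) very ampleness of $T_{|\Gamma_1}$ (resp.\ $T_{|F}$) furnishes two sections $s_1^k$ separating $\xi$. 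Inductively on $i$, I would use Lemma~\ref{punti-tge} to find $s_i^k\in|T_{|\Gamma_i}|$ compatible with the previously chosen sections along the $2x-e$ fibres $\Gamma_i\cap\Gamma_j$ (and along $F\cap\Gamma_i$ in Case~2), and transverse to the remaining intersection fibres. Gluing over $\Gamma_1\cup\cdots\cup\Gamma_z$ (resp.\ $F\cup\Gamma_1\cup\cdots\cup\Gamma_z$) and lifting via the surjection above produces two sections of $T$ separating $\xi$ on $X$.

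The main obstacle is precisely this inductive gluing, and it is where hypothesis~\textit{6)} is forced. On an elliptic curve Lemma~\ref{punti-tge} requires $h^0\ge 2r+2$ instead of the $2r$ that sufficed on $\mathbb{P}^1$, and at the same time the Riemann--Roch count on $\Gamma_i$ gives $h^0(\Gamma_i,T_{|\Gamma_i})=(L+M)(C_0+xf)$ rather than $(L+M)(C_0+xf)+2$. Thus, in order to make the induction legal at every step $i=2,\dots,z$ (where the number of interpolation points is $(z-1)(2x-e)$ in Case~1 and $(z-1)(2x-e)+1$ in Case~2), one needs $(a_l+a_m)(x-e)+b_l+b_m\ge 2(z-1)[(2x-e)+1]+2$, which is exactly hypothesis~\textit{6)}. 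Once this bookkeeping is in place the rest of the argument is identical to the proof of Theorem~\ref{Valma}.
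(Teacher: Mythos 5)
Your proposal follows essentially the same route as the paper: very ampleness of $|C_0+xf|$ via the $\mu^-(\mathcal{F})$ criterion, very ampleness of $\mathcal{E}_{|\gamma}$ on the elliptic sections via $\mu^-(\mathcal{E}_{|\gamma})\geq 3$ from hypothesis 1), the same vanishings from 3)--5) to lift sections, and the substitution of Lemma \ref{punti-tge} (needing $h^0\geq 2r+2$) for Lemma \ref{punti-tg} together with the drop of the $+2$ in $h^0(\Gamma_i,T_{|\Gamma_i})$, which is exactly how the paper arrives at hypothesis 6). The only cosmetic difference is that the paper phrases the extra vertical twist as $-\rho^*P$ for an arbitrary point $P\in C$ (since fibres over distinct points are not linearly equivalent on an elliptic base) rather than a fixed $-f$, but this does not affect the argument.
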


\begin{proof}
Let us recall (see \cite{m}) that a divisor on $Y,$ whose numerical class is
$\alpha C_{0}+\beta f,$ is ample if and only if $\alpha \geq 1,$ $\beta
+\alpha \mu ^{-}(\mathcal{F})>0.$ The divisor is very ample if $\beta
+\alpha \mu ^{-}(\mathcal{F})\geq 3$ or $\beta +\alpha \mu ^{-}(\mathcal{F}
)>1$ if $\mathcal{F}$ has degree $1$ (see \cite{abb}).

Let us consider a linear system $|C_{0}+\rho ^{*}\varkappa |$ with $\deg
(\varkappa ):=x.$ By\textit{\ }0) it is very ample. Let $\gamma $ be any
smooth elliptic curve in this system passing through at most $2$ points of $%
W.$ Let us consider the restriction $\mathcal{E}_{|\gamma }$. We get an
exact sequence: $0\rightarrow \mathcal{L}\rightarrow \mathcal{E}_{|\gamma
}\rightarrow \mathcal{M}\rightarrow 0$ where $\mathcal{L}$ and $\mathcal{M}$
are linear bundles on $\gamma $ of degrees $xa_{l}+b_{l}-a_{l}e+\varepsilon $
and $xa_{m}+b_{m}-a_{m}e-\varepsilon $ respectively, and $\varepsilon =0,1,2$
is the number of common points among $\gamma $ and $W.$ Recall that $\mu
^{-}(\mathcal{E}_{|\gamma })\geq \min \{xa_{l}+b_{l}-a_{l}e+\varepsilon
,xa_{m}+b_{m}-a_{m}e-\varepsilon \}$ (see \cite{bu}). By 1)\textit{\ }$\mu
^{-}(\mathcal{E}_{|\gamma })\geq 3,$ so that $\mathcal{E}_{|\gamma }$ is
very ample by Theorem 3.3 and Proposition 3.2 of \cite{abb}.

If we fix any fibre $\mathit{f}$ of $Y$ and we consider $\mathcal{E}_{|%
\mathit{f}}$, we get that $\mathcal{E}_{|\mathit{f}}$ is isomorphic to $%
\mathcal{O}_{\Bbb{P}^{1}}(h)\oplus \mathcal{O}_{\Bbb{P}^{1}}(k),$ with $%
h,k>0 $ by assumption 2), so that $\mathcal{E}_{|\mathit{f}}$ is very ample.

Let us consider the following exact sequences:\newline
$0\rightarrow L-z\gamma \rightarrow \mathcal{E}\otimes \mathcal{O}
_{Y}(-z\gamma )\rightarrow (M-z\gamma )\otimes \mathcal{I}_{W}\rightarrow 0,$%
\newline
$0\rightarrow (M-z\gamma )\otimes \mathcal{I}_{W}\rightarrow M-z\gamma
\rightarrow (M-z\gamma )\otimes \mathcal{O}_{W}\rightarrow 0.$

By using assumptions 3) and 5) it is easy to see that $h^{1}(Y,\mathcal{E}%
\otimes \mathcal{O}_{Y}(-z\gamma ))=0.$

Let us consider the following exact sequences, for any point $P\in C$:%
\newline
$0\rightarrow L-z\gamma -\rho ^{*}P\rightarrow \mathcal{E}\otimes \mathcal{O}%
_{Y}(-z\gamma -\rho ^{*}P)\rightarrow (M-z\gamma -\rho ^{*}P)\otimes
\mathcal{I}_{W}\rightarrow 0,$\newline
$0\rightarrow (M-z\gamma -\rho ^{*}P)\otimes \mathcal{I}_{W}\rightarrow
M-z\gamma -\rho ^{*}P\rightarrow (M-z\gamma -\rho ^{*}P)\otimes \mathcal{O}
_{W}\rightarrow 0.$

By using assumptions 4) and 5) it is easy to see that $h^{1}(Y,\mathcal{E}
\otimes \mathcal{O}_{Y}(-z\gamma -\rho ^{*}P))=0.$

Let us consider any effective divisor $D\equiv $ $z\pi ^{*}\gamma $ on $X$
and the exact sequence: $0\rightarrow T-D\rightarrow T\rightarrow T\otimes
\mathcal{O}_{D}\rightarrow 0.$ The natural map $|T|\rightarrow |T_{|D}|$ is
surjective as $h^{1}(X,T-D)=h^{1}(Y,\mathcal{E}\otimes \mathcal{O}%
_{Y}(-z\gamma ))=0$ by the above vanishing. A similar argument shows that if
$D\equiv $ $z\pi ^{*}\gamma +\pi ^{*}\rho ^{*}P,$ the natural map is
surjective too, for any point $P\in C.$

Now we proceed as in the proof of Theorem \ref{Valma} after recalling that,
in our assumptions, on $Y$ there are no $|C_{0}+\pi ^{*}\varkappa |$-lines
except for the ruling by Lemma \ref{nolines}.

Now we can repeat the proof of Theorem \ref{Valma}, more or less verbatim.
We have to use Lemma \ref{punti-tge} with $r=(C_{0}+\pi ^{*}\varkappa
)^{2}=(2x-e)$ (or $r=(C_{0}+\pi ^{*}\varkappa )^{2}+1=(2x-e)+1$) instead of
Lemma \ref{punti-tg}. Note that, in any case, assumption 0) implies that $%
r\geq 2$. The only difference is that here $h^{0}(\Gamma _{i},T_{|\Gamma
_{i}})=$ $h^{0}(\Gamma _{2},T_{|\Gamma _{2}})=(a_{l}+a_{m})(x-e)+b_{l}+b_{m}$
for any $i\geq 2,$ so that the last condition has to be: $%
(a_{l}+a_{m})(x-e)+b_{l}+b_{m}\geq 2(z-1)r+2$ in all cases, i.e. we need
assumption 6)\textit{.}
\end{proof}

\section{Very ampleness through general position}

In this section we consider some very special rank $2$ vector bundles over $%
Y=\mathbf{F}_{1}$. The aim of Section 6 will be evident in Section 7.

\begin{definition}
\label{defposiz} Let $W^{\prime }$\ be a $0$-dimensional scheme of lenght $%
w^{\prime }$\ in $\Bbb{P}^{2}$\ consisting of $w^{\prime }$\ simple points.
These points are said to be in general position on $\Bbb{P}^{2}$\ if, for
any positive integer $k,$\ $h^{0}(\Bbb{P}^{2},\mathcal{I}_{W^{\prime
}}(k))=\max \{h^{0}(\Bbb{P}^{2},\mathcal{O}_{\Bbb{P}^{2}}(k))-w^{\prime
},0\}=\max \{\binom{k+2}{2}-w^{\prime },0\}.$\ Let $s:Y\rightarrow $\ $\Bbb{P%
}^{2}$\ be the blow up of $\Bbb{P}^{2}$\ at one point $P_{0},$\ and let $%
W\subset Y$\ be a $0$-dimensional scheme consisting of $w$\ simple points
none of which lie on the exceptional divisor. These points are said to be in
general position on $Y$\ if $P_{0}\cup s(W)$\ consists of $w^{\prime }=w+1$\
distinct points in general position on $\Bbb{P}^{2}$.
\end{definition}

Definition \ref{defposiz} can be reinterpreted in terms of cohomological
vanishing.

\begin{lemma}
\label{lemgenpos} Let $s:Y\rightarrow $ $\Bbb{P}^{2}$ be the blow up of $%
\Bbb{P}^{2}$ at one point $P_{0}.$ Let $W$ be a set of $w$ distinct points
on $Y$ in general position according to definition \ref{defposiz}; let $D$ $%
\equiv $ $aC_{0}+bf$ be a divisor on $Y$ such that $a\geq 0,$ $b\geq 1$ and $%
b\geq a.$ Let us assume that $h^{0}(Y,D)\geq w,$ then $h^{1}(Y,D\otimes
\mathcal{I}_{W})=0.$
\end{lemma}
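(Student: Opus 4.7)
The plan is to transfer the problem to $\mathbb{P}^2$ via the blow-down $s:Y\to\mathbb{P}^2$ and then exploit the general-position hypothesis on $W'=\{P_0\}\cup s(W)$.

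\emph{Step 1 (Leray reduction).} Because $D\cdot C_0 = b-a\ge 0$ and $W\cap C_0=\emptyset$, induction on $n$ via the sequences $0\to\mathcal O_{C_0}(D-nC_0)\to\mathcal O_{(n+1)C_0}(D)\to\mathcal O_{nC_0}(D)\to 0$ (noting that $(D-nC_0)\cdot C_0 = b-a+n\ge -1$) shows, through the formal-functions theorem, that $R^1s_*(\mathcal O_Y(D)\otimes\mathcal I_W)=0$. By the projection formula $s_*(\mathcal O_Y(D)\otimes\mathcal I_W)=\mathcal O_{\mathbb P^2}(b)\otimes\mathfrak m_{P_0}^{b-a}\otimes\mathcal I_{s(W)}=:\mathcal I_V(b)$, where $V:=P_0^{b-a}\cup s(W)$. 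Hence
\[
h^1(Y,D\otimes\mathcal I_W)=h^1(\mathbb P^2,\mathcal I_V(b)).
\]

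\emph{Step 2 (reduction to an evaluation statement).} The short exact sequence $0\to\mathcal I_V(b)\to\mathcal O(b)\otimes\mathfrak m_{P_0}^{b-a}\to\mathcal O_{s(W)}\to 0$ on $\mathbb P^2$ (valid since $P_0\notin s(W)$) together with the standard vanishing $h^1(\mathbb P^2,\mathcal O(b)\otimes\mathfrak m_{P_0}^{b-a})=0$ (which follows from $h^1(\mathcal O(b))=0$ and the surjectivity of the $(b-a-1)$-jet map at $P_0$, using $b\ge b-a-1$) reduces the problem to showing that the evaluation map
\[
\mathrm{ev}\colon H^0(\mathbb P^2,\mathcal O(b)\otimes\mathfrak m_{P_0}^{b-a})\twoheadrightarrow\mathbb C^w,\qquad s(W)=\{P_1,\dots,P_w\},
\]
is surjective; equivalently, that $s(W)$ imposes $w$ independent conditions on the linear system $|\mathcal O(b)\otimes\mathfrak m_{P_0}^{b-a}|$.

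\emph{Step 3 (induction on $w$).} The case $w=0$ is Step 1. For the inductive step, pick $P=s(Q)\in s(W)$, set $W_0=W\setminus\{Q\}$. By induction the points of $s(W_0)$ impose independent conditions, so it suffices to exhibit a plane curve of degree $b$ with $\mathrm{mult}_{P_0}\ge b-a$, passing through $s(W_0)$ and avoiding $P$. Use the general position of $W'$: for every $P_j\in s(W_0)$ the line $L_j=\overline{P_0 P_j}$ meets $W'$ only at $P_0$ and $P_j$. Pick $k$ of those lines through points $P_{j_1},\dots,P_{j_k}$ (with $k\le b-a$ to be chosen), and take
\[
C\;=\;L_{j_1}\cdots L_{j_k}\cdot G,
\]
where $G$ is a plane curve of degree $b-k$ with $\mathrm{mult}_{P_0}(G)\ge b-a-k$ passing through the remaining points $s(W_0)\setminus\{P_{j_1},\dots,P_{j_k}\}$ and avoiding $P$. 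The existence of such a $G$ is a smaller instance of the lemma (with parameters $(a,b-k,w-k)$), handled by the induction, provided $k$ is chosen so that $h^0(Y,aC_0+(b-k)f)\ge w-k$; optimizing $k$ over $\{0,1,\dots,b-a\}$, this is possible because the hypothesis $h^0(Y,D)\ge w$ combined with the subset-stability of general position leaves enough room. Lifting $C$ to $Y$ then separates $Q$ from $W_0$, completing the induction.

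\emph{Main obstacle.} The whole difficulty is concentrated in Step 3, specifically in ensuring that a valid choice of $k$ exists when $w$ is close to $h^0(Y,D)$. In the easy regime $w\le\binom{a+2}{2}+(b-a)$ one can simply read off $k=\max\{0,w-\binom{a+2}{2}\}$ and use a residual degree-$a$ curve; in the tight regime $\binom{a+2}{2}+(b-a)<w\le h^0(Y,D)$ the naive product decomposition does not suffice, and one must either iterate the inductive construction with partially fat residuals or supplement the lines through $P_0$ with auxiliary curves not passing through $P_0$, invoking the full strength of the general-position hypothesis on $W'$.
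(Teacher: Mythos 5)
Your Steps 1 and 2 are correct and amount to the same reduction the paper makes (the paper works directly with the restriction map $H^{0}(Y,D)\rightarrow H^{0}(W,D\otimes \mathcal{O}_{W})$ and the identification $D=bl-(b-a)l_{0}$, rather than pushing forward to $\Bbb{P}^{2}$, but the content is identical): everything comes down to showing that $s(W)$ imposes independent conditions on plane curves of degree $b$ with a point of multiplicity at least $b-a$ at $P_{0}$. The problem is that this is precisely where the entire difficulty of the lemma sits, and your Step 3 does not establish it. What is needed is the non-speciality of the quasi-homogeneous linear system $\mathcal{L}(b,b-a,w,1)$; the paper gets this by invoking Theorem 8.1 and Lemma 7.1 of Ciliberto--Miranda, a genuinely non-trivial classification result. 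Your inductive construction (products of lines through $P_{0}$ times a residual curve) only works in what you call the easy regime, and you explicitly concede that in the tight regime $\binom{a+2}{2}+(b-a)<w\leq h^{0}(Y,D)$ the decomposition fails and ``one must either iterate \dots or supplement \dots''. That is not a proof but a statement of the remaining problem, and the tight regime is not vacuous in this paper: for instance in Proposition \ref{restriction} $vi)$ the lemma is applied with $a=1$, $b=h$, $w=h+y$, where $\binom{a+2}{2}+(b-a)=h+2<h+y$ for $y=3,4$. So the argument has a genuine gap exactly where the lemma has content.

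A secondary issue: you repeatedly use that general position in the sense of Definition \ref{defposiz} gives you geometric separation statements --- that the line $\overline{P_{0}P_{j}}$ meets $W^{\prime}$ only in $P_{0}$ and $P_{j}$, and that general position is ``subset-stable''. Definition \ref{defposiz} is a Hilbert-function condition on the single set $W^{\prime}=P_{0}\cup s(W)$ for every degree $k$; it does not formally imply that proper subsets have generic Hilbert function, nor that no three points of $W^{\prime}$ are collinear (five points with exactly three collinear already have generic Hilbert function in every degree). These facts would have to be argued, or the definition strengthened, before your Step 3 could even get started. The cleanest repair is to do what the paper does and quote the Ciliberto--Miranda result on quasi-homogeneous systems.
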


\begin{proof}
Let us consider the exact sequence: $0\rightarrow D\otimes \mathcal{I}
_{W}\rightarrow D\rightarrow D\otimes \mathcal{O}_{W}\rightarrow 0.$

As $H^{1}(Y,D)=0$ we get our vanishing if (ad only if) the restriction map $%
r:H^{0}(Y,D)\rightarrow H^{0}(W,D\otimes \mathcal{O}_{W})$ is surjective.
Let $l$ be the pull back of the generator of \textrm{Pic}$(\Bbb{P}^{2})$ and
let $l_{0}$ be the exceptional divisor for $s,$ then $D=bl-(b-a)l_{0}$ and,
by assumptions, $D$ is an effective divisor. Hence $r$ is surjective if and
only if the $0$-dimensional scheme $s(W)$ imposes independent conditions on
plane curves of degree $b,$ having a point of multiplicity at least $b-a$ at
$P_{0}.$ This condition, using the same language, definitions and notation
introduced in \cite{cm}, pag. 192, is equivalent to the \textit{%
quasi-homogeneous} linear system $\mathcal{L}(b,b-a,w,1)$ not being \textit{%
\ special}. This in turn follows from Theorem 8.1 and Lemma 7.1 of \cite{cm}
as $P_{0}\cup s(W)$ are in general position in $\Bbb{P}^{2}$.
\end{proof}

Now, let $L\equiv C_{0}+(5-h)f$ and $M\equiv 2C_{0}+hf$ be two line bundles
on $Y$ with $h\geq 3.$ Let us fix an integer $y$ such that $-2\leq y\leq 4$
and let us choose a set $W$ of $w=h+y\geq 1$ distinct points on $Y$ in
general position according to definition \ref{defposiz}; in particular each
fibre contains at most one point of $W$. On our surface $K_{Y}\equiv
-2C_{0}-3f$ so that $|K_{Y}+M-L|=|-C_{0}+(2h-8)f|=\emptyset $ and therefore
we can apply Griffiths-Harris theorem on the existence of rank $2$ vector
bundles on surfaces (see \cite{dl}, Theorem 3.13 and its proof) and we get
an exact sequence as the following: $0\rightarrow \mathcal{O}_{Y}\rightarrow
\mathcal{E}_{y}^{\prime }\rightarrow (M-L)\otimes \mathcal{I}_{W}\rightarrow
0;$ by tensorizing it with $L$ we get rank $2$ vector bundles $\mathcal{E}%
_{y}$:

\begin{center}
$0\rightarrow L\rightarrow \mathcal{E}_{y}\rightarrow M\otimes \mathcal{I}%
_{W}\rightarrow 0$ $\qquad (*)$
\end{center}

such that $c_{1}(\mathcal{E}_{y})\equiv 3C_{0}+5f,$ $c_{2}(\mathcal{E}
_{y})=8+y.$ In this section we prove very amplenes results for $\mathcal{E}
_{y}$, with $-2\leq y\leq 3,$ for suitable sets $W$ of $h+y$ generic points
of $Y.$ These results are presented in Theorems \ref{teoy3} and \ref{teoy4}
after a number of preparatory Lemmas.

Firstly we have the following.

\begin{proposition}
\label{restriction} Let $\mathcal{E}_{y}$ be any vector bundle defined as
above by $(*)$. Let $\overline{f}$ be any fixed fibre of $Y.$ Let $\gamma $
be a smooth element of $|C_{0}+f|$ passing through at most two points of $W.$
Then:

$i)$ the restriction $\mathcal{E}_{y|\overline{f}}$ is isomorphic to $%
\mathcal{O}_{\Bbb{P}^{1}}(1)\oplus \mathcal{O}_{\Bbb{P}^{1}}(2),$ hence very
ample;

$ii)$ if $h=3$ then $h^{1}(Y,\mathcal{E}_{y}\otimes \mathcal{O}_{Y}(-%
\overline{f}))=0$ for $-2\leq y\leq 3;$

$iii)$ if $h=4$ then $h^{1}(Y,\mathcal{E}_{y}\otimes \mathcal{O}_{Y}(-%
\overline{f}))=0$ for $-2\leq y\leq 4;$

$iv)$ if $h=3$ the restriction $\mathcal{E}_{y|C_{0}}$ is very ample;

$v)$ if $h=4$ the restriction $\mathcal{E}_{y|C_{0}}$ is very ample or,
possibly, the rational map associated to the linear system of the
tautological divisor in $\Bbb{P}(\mathcal{E}_{y|C_{0}})$ is a birational
morphism, contracting only $C_{0}$ at a singular double point;

$vi)$ if $h=3$ or $h=4,$ $h^{1}(Y,\mathcal{E}_{y}$ $\otimes \mathcal{O}%
_{Y}(-C_{0}))=0;$

$vii)$ if $h=3$ or $h=4,$ the restriction $\mathcal{E}_{y|\gamma }$ is very
ample;

$viii)$ if $h=3$ then $h^{1}(Y,\mathcal{E}_{y}$ $\otimes \mathcal{O}%
_{Y}(-\gamma ))=0$ for $-2\leq y\leq 2;$

$ix)$ if $h=4$ then $h^{1}(Y,\mathcal{E}_{y}$ $\otimes \mathcal{O}%
_{Y}(-\gamma ))=0$ for $-2\leq y\leq 3.$
\end{proposition}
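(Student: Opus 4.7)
The plan is to prove each of the nine items by twisting the defining extension
\[
0\to L\to \mathcal{E}_y\to M\otimes \mathcal{I}_W\to 0
\]
by the appropriate line bundle and then tracking either (a) the splitting type of the restriction of $\mathcal{E}_y$ to a suitable curve, or (b) the vanishing of the first cohomology. The key numerical data I will repeatedly use is $L\cdot f=1,\, M\cdot f=2,\, L\cdot C_0=4-h,\, M\cdot C_0=h-2,\, L\cdot(C_0+f)=5-h,\, M\cdot(C_0+f)=h$, together with the hypothesis that $W$ lies in general position, misses the exceptional section $C_0$, and meets each fibre in at most one point.

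For the items about splitting type on curves (namely (i), (iv), (v), (vii)), I will restrict the defining sequence to the curve $\nu\in\{\bar f,C_0,\gamma\}$. The subtle point is the behaviour of the Serre-construction sequence at a point $P\in W\cap\nu$. There, local generators of $\mathcal{E}_y$ make $L|_\nu\hookrightarrow \mathcal{E}_y|_\nu$ a subsheaf that fails to be a subbundle exactly at $P$; saturating yields a line subbundle $L|_\nu(P)$, and the resulting quotient is a line bundle whose degree is forced by $c_1(\mathcal{E}_y)=3C_0+5f$. The extensions by line bundles of larger degree always split on $\mathbb{P}^1$ via $\mathrm{Ext}^1$ vanishing. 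This gives $\mathcal{E}_y|_{\bar f}\cong \mathcal{O}(1)\oplus \mathcal{O}(2)$ (item (i)), $\mathcal{E}_y|_{C_0}\cong \mathcal{O}(1)\oplus \mathcal{O}(1)$ for $h=3$ (item (iv), since $W\cap C_0=\emptyset$), and $\mathcal{E}_y|_{C_0}\cong \mathcal{O}\oplus \mathcal{O}(2)$ for $h=4$; in the latter case $\mathbb{P}(\mathcal{E}_y|_{C_0})\cong \mathbf{F}_2$ and $|T|_{|C_0}|=|C_0'+2f|$ maps $\mathbf{F}_2$ birationally onto the cone over a conic, contracting the negative section—this is item (v). For (vii), $\gamma\cong\mathbb{P}^1$ contains at most two points of $W$, so after saturating we land on $\mathcal{O}(5-h+k)\oplus \mathcal{O}(h-k)$ with $k\in\{0,1,2\}$, all summands being strictly positive for $h\in\{3,4\}$, hence $\mathcal{E}_{y|\gamma}$ is very ample on $\mathbb{P}^1$.

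For the items about cohomological vanishing (namely (ii), (iii), (vi), (viii), (ix)), I will twist the defining sequence by $-\bar f$, $-C_0$, or $-\gamma$ respectively, and take the long exact sequence in cohomology. In each case the vanishing of $h^1(Y,\mathcal{E}_y\otimes \mathcal{O}_Y(-\nu))$ reduces to the two vanishings $h^1(Y,L-\nu)=0$ and $h^1(Y,(M-\nu)\otimes \mathcal{I}_W)=0$. The first is a direct computation for line bundles of the form $aC_0+bf$ on $\mathbf{F}_1$ using $\rho_*$ and the projection formula; one checks for example $h^1(Y,C_0+f)=h^1(Y,2f)=h^1(Y,C_0+2f)=0$, and similarly in the $h=4$ case. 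For the second vanishing, apply the exact sequence
\[
0\to (M-\nu)\otimes \mathcal{I}_W \to M-\nu \to (M-\nu)\otimes \mathcal{O}_W\to 0
\]
and combine $h^1(Y,M-\nu)=0$ with Lemma~\ref{lemgenpos}: the relevant divisor $D=M-\nu$ satisfies the numerical hypotheses $a\geq 0,\ b\geq 1,\ b\geq a$, and the bound $h^0(D)\geq w$ translates into the precise ranges of $y$ stated in (ii), (iii), (viii), (ix). For (vi) one computes $h^0(M-C_0)=7$ or $9$ depending on $h$, easily dominating $w$ within $-2\leq y\leq 4$.

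The only step that is genuinely more than bookkeeping is the local analysis at points of $W\cap \nu$ needed for (i) and (vii); the main obstacle is to rule out splitting types like $(0,3)$ or $(-1,4)$ on $\bar f$ when $\bar f$ meets $W$. Once it is observed that the Serre construction locally expresses $L$ as generated by the syzygy of $(u,v)$, so that $L|_\nu$ saturates precisely by adding the points $\nu\cap W$, all parts of the Proposition reduce to the splitting/vanishing verifications above and produce the stated ranges of $y$.
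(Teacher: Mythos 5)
Your proposal follows essentially the same route as the paper: restrict the defining sequence $(*)$ to $\overline{f}$, $C_{0}$, $\gamma$ to read off splitting types for items $i)$, $iv)$, $v)$, $vii)$ (with the degree shift by $\varepsilon=\#(W\cap \nu)$ at points of $W$, exactly as in the paper), and twist $(*)$ by $-\overline{f}$, $-C_{0}$, $-\gamma$ and invoke Lemma \ref{lemgenpos} for the vanishings, with the ranges of $y$ coming from the inequality $h^{0}(Y,M-\nu)\geq w=h+y$. The numerology checks out: $h^{0}(2C_{0}+(h-1)f)=3h-3$, $h^{0}(C_{0}+hf)=2h+1$, $h^{0}(C_{0}+(h-1)f)=2h-1$ reproduce precisely the stated bounds on $y$ in $ii)$, $iii)$, $vi)$, $viii)$, $ix)$.

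The one genuine slip is in item $v)$. For $h=4$ the restricted sequence is $0\rightarrow \mathcal{O}_{\Bbb{P}^{1}}\rightarrow \mathcal{E}_{y|C_{0}}\rightarrow \mathcal{O}_{\Bbb{P}^{1}}(2)\rightarrow 0$, and here $\mathrm{Ext}^{1}(\mathcal{O}_{\Bbb{P}^{1}}(2),\mathcal{O}_{\Bbb{P}^{1}})=H^{1}(\Bbb{P}^{1},\mathcal{O}_{\Bbb{P}^{1}}(-2))\neq 0$, so your blanket principle that ``extensions by line bundles of larger degree always split'' does not apply; the nontrivial extension is $\mathcal{O}_{\Bbb{P}^{1}}(1)\oplus \mathcal{O}_{\Bbb{P}^{1}}(1)$. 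You must therefore treat two cases, as the paper does: either $\mathcal{E}_{y|C_{0}}\simeq\mathcal{O}(1)\oplus\mathcal{O}(1)$ (very ample) or $\mathcal{E}_{y|C_{0}}\simeq\mathcal{O}\oplus\mathcal{O}(2)$ (the quadric-cone contraction of $C_{0}$). Since statement $v)$ is deliberately disjunctive, your conclusion is not false, but your derivation of it asserts an unproved (and in general unprovable) splitting; the fix is the one-line case distinction above. Everything else in the proposal is sound.
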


\begin{proof}
For $i)$ we restrict $(*)$ to $\overline{f}$ and we get the exact sequence:%
\newline
$0\rightarrow \mathcal{O}_{\Bbb{P}^{1}}(1+\varepsilon )\rightarrow \mathcal{E%
}_{y|\overline{f}}\rightarrow \mathcal{O}_{\Bbb{P}^{1}}(2-\varepsilon
)\rightarrow 0$\newline
where $\varepsilon =1$ or $\varepsilon =0$ according to whether $\overline{f}
$ contains one point of $W$ or not. Note that, in any case, $\mathcal{E}_{y|%
\overline{f}}$ $\simeq $ $\mathcal{O}_{\Bbb{P}^{1}}(1)\oplus \mathcal{O}_{%
\Bbb{P}^{1}}(2),$ hence very ample.

For $ii)$ and $iii)$ we tensorize $(*)$ by $\mathcal{O}_{Y}(-\overline{f})$
and we get:\newline
$0\rightarrow C_{0}+(4-h)f\rightarrow \mathcal{E}_{y}\otimes \mathcal{O}
_{Y}(-\overline{f})\rightarrow (2C_{0}+hf-\overline{f})\otimes \mathcal{I}%
_{W}\rightarrow 0.$\newline
In any case $h^{1}(Y,C_{0}+(4-h)f)$ $=0.$ Moreover $h^{1}(Y,(2C_{0}+hf-%
\overline{f})\otimes \mathcal{I}_{W})=0$, by Lemma \ref{lemgenpos}, if $%
h^{0}(Y,2C_{0}+(h-1)f)=3h-3\geq w=h+y.$ \newline
Hence $h^{1}(Y,\mathcal{E}_{y}\otimes \mathcal{O}_{Y}(-\overline{f}))=0$ for
$y\leq 3$ when $h=3$ and for $y\leq 4$ when $h=4.$

For $iv)$ and $v)$ we restrict $(*)$ to $C_{0}$ and we get the exact
sequence (recall that $W\cap C_{0}=\emptyset $)$:0\rightarrow \mathcal{O}_{%
\Bbb{P}^{1}}(4-h)\rightarrow \mathcal{E}_{y|C_{0}}\rightarrow \mathcal{O}_{%
\Bbb{P}^{1}}(h-2)\rightarrow 0.$ If $h=3$ then $\mathcal{E}_{y|C_{0}}$ is
very ample. If $h=4,$ we have two cases: $\mathcal{E}_{y|C_{0}}=\mathcal{O}_{%
\Bbb{P}^{1}}(1)\oplus \mathcal{O}_{\Bbb{P}^{1}}(1),$ very ample, or $%
\mathcal{E}_{y|C_{0}}=\mathcal{O}_{\Bbb{P}^{1}}\oplus \mathcal{O}_{\Bbb{P}
^{1}}(2);$ in this second case it is well known that $\Bbb{P}(\mathcal{E}%
_{y|C_{0}})\simeq \mathbf{F}_{2}$ and the tautological linear system gives
rise to a birational morphism sending $\mathbf{F}_{2}$ into a quadric cone
of rank $3$, singular only at its vertex, where the curve $C_{0}$ is
contracted.

For $vi)$ we tensorize $(*)$ by $\mathcal{O}_{Y}(-C_{0})$ and we get the
exact sequence (recall that $W\cap C_{0}=\emptyset $ ) $:0\rightarrow
(5-h)f\rightarrow \mathcal{E}_{y}\otimes \mathcal{O}_{Y}(-C_{0})\rightarrow
(C_{0}+hf)\otimes \mathcal{I}_{W}\rightarrow 0;$ now $h^{1}(Y,(5-h)f)$ $=0$
and $h^{1}(Y,(C_{0}+hf)\otimes \mathcal{I}_{W})=0$ by Lemma \ref{lemgenpos}
as $h^{0}(Y,C_{0}+hf)=2h+1\geq w=h+y$ .

For $vii)$ we restrict $(*)$ to $\gamma $ and we get the exact sequence:%
\newline
$0\rightarrow \mathcal{O}_{\Bbb{P}^{1}}(5-h+\varepsilon )\rightarrow
\mathcal{E}_{y|\gamma }\rightarrow \mathcal{O}_{\Bbb{P}^{1}}(h-\varepsilon
)\rightarrow 0$ where $\varepsilon =0,1,2$ according to whether $\gamma $
passes through $0,1,2$ points of $W.$ In any case $\mathcal{E}_{y|\gamma }$
is very ample.

For $viii)$ and $ix)$ we tensorize $(*)$ by $\mathcal{O}_{Y}(-\gamma )$ and
we get the exact sequence:\newline
$0\rightarrow (4-h)f\rightarrow \mathcal{E}_{y}\otimes \mathcal{O}
_{Y}(-\gamma )\rightarrow (2C_{0}+hf-\gamma )\otimes \mathcal{I}%
_{W}\rightarrow 0.$

In any case $h^{1}(Y,(4-h)f)$ $=0$. Moreover $h^{1}(Y,(2C_{0}+hf-\gamma
)\otimes \mathcal{I}_{W})=0$, by Lemma \ref{lemgenpos}, if $%
h^{0}(Y,2C_{0}+hf-\gamma )=2h-1\geq w=h+y.$ \newline
Hence $h^{1}(Y,\mathcal{E}_{y}\otimes \mathcal{O}_{Y}(-\gamma ))=0$ for $%
y\leq 2$ when $h=3$ and for $y\leq 3$ when $h=4.$
\end{proof}

Now we prove the following Lemmas.

\begin{lemma}
\label{jet} Let $\mathcal{E}_{y}$ be any vector bundle defined as above by $%
(*)$. Let $A\equiv C_{0}+xf$ be a divisor on $Y.$ Then $\mathcal{E}%
_{y}\otimes \mathcal{O}_{Y}(A)$ is very ample for $x>>0.$
\end{lemma}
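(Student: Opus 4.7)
The plan is to apply Theorem~\ref{Valma} directly to the vector bundle $\mathcal{E}_y \otimes \mathcal{O}_Y(A)$ on $Y = \mathbf{F}_1$. Tensoring the defining sequence $(*)$ by $\mathcal{O}_Y(A)$ gives
$$0 \to L + A \to \mathcal{E}_y \otimes \mathcal{O}_Y(A) \to (M+A) \otimes \mathcal{I}_W \to 0,$$
so Theorem~\ref{Valma} applies with $L$ and $M$ replaced by $L' := L + A \equiv 2C_0 + (5-h+x)f$ and $M' := M + A \equiv 3C_0 + (h+x)f$, while $W$ is unchanged. Note that $L' \cdot f = 2$, $M' \cdot f = 3$, and since by construction the $w = h+y$ points of $W$ lie on distinct fibres, $\mathrm{lm}_Y(W) = 1$.

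I would then fix $z = 1$ and any integer $\tilde{x} \geq e+2 = 3$ (this being the parameter called $x$ in Theorem~\ref{Valma}) and check that all six of its numerical conditions are satisfied as soon as $x$ is large enough. Conditions $1$ and $2$ are intersection inequalities that grow linearly in $x$, hence hold for $x \gg 0$; with $z = 1$, Condition $6$ degenerates to $(L'+M')(C_0 + \tilde{x} f) \geq 0$, which is automatic. Conditions $3$ and $4$ reduce to $h^1$-vanishings for divisors of the form $cC_0 + (dx + \mathrm{const})f$ on $\mathbf{F}_1$ with bounded $c$; for $x$ large, these follow routinely, since such divisors eventually dominate $K_Y$ by an ample divisor, so Kodaira vanishing applies.

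Condition $5$ is the most delicate point: it asks that $W$ impose independent conditions on $|M' - z(C_0 + \tilde{x} f)|$ and $|M' - z(C_0 + \tilde{x} f) - f|$. With $z=1$, the divisors in question have class $2C_0 + bf$ with $b$ growing linearly in $x$, and since $W$ is in \emph{general position} on $\mathbf{F}_1$ in the sense of Definition~\ref{defposiz}, Lemma~\ref{lemgenpos} is precisely tailored to produce the required vanishing $h^1(Y, D \otimes \mathcal{I}_W)=0$ as soon as $b\geq 2$ and $h^0(Y,D) \geq w$, both of which hold for $x\gg 0$. Once all six conditions are verified, Theorem~\ref{Valma} concludes that the tautological divisor on $\Bbb{P}(\mathcal{E}_y \otimes \mathcal{O}_Y(A)) = \Bbb{P}(\mathcal{E}_y)$ is very ample, which is by definition the very ampleness of $\mathcal{E}_y \otimes \mathcal{O}_Y(A)$.

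The main obstacle is not really any single step---each verification is a routine inequality or a standard cohomological vanishing---but rather the bookkeeping: tracking how the cohomological and general-position hypotheses of Theorem~\ref{Valma} transform under the twist by $A$ and confirming that each of them becomes trivially satisfied as $x \to \infty$. The essential technical input remains Lemma~\ref{lemgenpos}, which encodes exactly the vanishings that the general-position assumption on $W$ guarantees on $\mathbf{F}_1$.
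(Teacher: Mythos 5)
Your proof is correct, but it takes a genuinely different route from the paper's. You invoke Theorem~\ref{Valma} for the twisted bundle $\mathcal{E}_y\otimes\mathcal{O}_Y(A)$, with $z=1$ and the parameter of that theorem fixed at $e+2=3$; the verification you sketch is sound: the twisted sequence remains a non-split extension with the same reduced $W$ (which has $lm_Y(W)=1$ since distinct points of $W$ lie on distinct fibres), conditions 1--4 and 6 are elementary for $x\gg 0$, and condition 5 is exactly the surjectivity supplied by Lemma~\ref{lemgenpos} once $h^0$ of the relevant divisors exceeds $w$. There is no circularity, since Theorem~\ref{Valma} is established in Section 4 independently of Section 6. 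The paper instead short-circuits the whole question through the jet-ampleness machinery of Beltrametti--Di Rocco--Sommese: very ampleness is reduced to $1$-jet ampleness, hence to global generation of $\mathcal{E}_y\otimes\mathcal{O}_Y(A-B)$ for a very ample $B\equiv C_0+2f$, i.e.\ to spannedness of $\mathcal{E}_y\otimes\mathcal{O}_Y(xf)$, which is checked by restricting to fibres and lifting sections via the vanishing $h^1(Y,\mathcal{E}_y\otimes\mathcal{O}_Y(xf-\overline{f}))=0$ (as in Proposition~\ref{restriction}). The paper's argument is shorter and avoids the six-condition bookkeeping, at the cost of importing external results on jet ampleness; yours stays entirely within the paper's own criterion and makes explicit that the general position of $W$ (through Lemma~\ref{lemgenpos}) is what drives the asymptotic vanishings --- a dependence the paper's proof also has, but hides inside the reference to Proposition~\ref{restriction}. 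One cosmetic slip: condition 2 of Theorem~\ref{Valma} does not ``grow linearly in $x$'' --- it is the constant statement $2>0$, $3>lm_Y(W)$ --- but since you compute it explicitly this does not affect the argument.
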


\begin{proof}
Let us tensorize $(*)$ by $\mathcal{O}_{Y}(A).$ We get:

\begin{center}
$0\rightarrow L+A\rightarrow \mathcal{E}_{y}\otimes \mathcal{O}
_{Y}(A)\rightarrow (M+A)\otimes \mathcal{I}_{W}\rightarrow 0.$
\end{center}

By Proposition 4.2 of \cite{bds} it suffices to show that $\mathcal{E}%
_{y}\otimes \mathcal{O}_{Y}(A)$ is $1-jet$ ample and, by Proposition 4.1 of
\cite{bds} it suffices to show that $\mathcal{E}_{y}\otimes \mathcal{O}%
_{Y}(A-B)$ is generated by global sections where $B$ is a very ample divisor
such that $B\equiv C_{0}+2f$. In other words, it suffices to show that $%
\mathcal{E}_{y}\otimes \mathcal{O}_{Y}(xf)$ is generated by global sections
for $x>>0.$ Let us consider $\Bbb{P}(\mathcal{E}_{y}\otimes \mathcal{O}
_{Y}(xf))$ and let us consider its tautological divisor $\mathcal{T},$ we
have only to show that $|\mathcal{T}|$ has no base points.

Let us tensorize $(*)$ by $\mathcal{O}_{Y}(xf).$ We get:

\begin{center}
$0\rightarrow L+xf\rightarrow \mathcal{E}_{y}\otimes \mathcal{O}
_{Y}(xf)\rightarrow (M+xf)\otimes \mathcal{I}_{W}\rightarrow 0.$
\end{center}

Let us fix any fibre $\overline{f}$ of $Y.$ By arguing as in the proof of
Proposition \ref{restriction} $i),$ we have that $[\mathcal{E}_{y}\otimes
\mathcal{\ \ O}_{Y}(xf)]_{|\overline{f}}$ is very ample for $x>>0$. By
arguing as in the proof of Proposition \ref{restriction} $ii)$ and $iii),$
we have that $h^{1}(Y,\mathcal{E}_{y}\otimes \mathcal{O}_{Y}(xf-\overline{f}
))=0$ for $x>>0.$ Hence, we get that $\mathcal{T}_{|\overline{f}}$ is very
ample and $|\mathcal{T}|\rightarrow $\TEXTsymbol{\vert}$\mathcal{T}_{|%
\overline{f}}|$ is surjective. Now, by contradiction, let us assume that $|%
\mathcal{T}|$ has a base point $P$ and let $\overline{f}$ be the unique
fibre of $Y$ passing through $\pi (P);$ $P$ would be a base point also for
\TEXTsymbol{\vert}$\mathcal{T}_{|\overline{f}}|,$ but this is not possible
as $\mathcal{T}_{|\overline{f}}$ is very ample$.$
\end{proof}

\begin{lemma}
\label{lemparete} Let $\mathcal{E}_{y}$ be any vector bundle defined as
above by $(*)$ and let $\overline{f}$ be any fixed fibre of $Y.$ Let $S_{1}$
be the rational ruled surface $\Bbb{P}(\mathcal{E}_{y|\overline{f}})$ and
let $\Gamma _{0}$ and $\varphi $ be the standard generators of \textrm{Num}$%
(S_{1})\simeq \mathrm{Pic}(S_{1}).$ Let $T$ be the tautological divisor of $%
X:=\Bbb{P}(\mathcal{E}_{y})$ as usual and let $A\equiv C_{0}+xf$ be a
divisor on $Y$ with $x>>0.$ We have the following:

$i)$ $S_{1}\simeq \mathbf{F}_{1}$, $T_{|S_{1}}\equiv \Gamma _{0}+2\varphi $
is very ample and for any generic smooth element $S_{2}\in $ $|T+\pi ^{*}A|$
the intersection $S_{2}\cap S_{1}$ is a smooth rational irreducible curve $%
C\equiv \Gamma _{0}+3\varphi ,$ moreover $|T_{|S_{1}}|_{|C}\simeq
|T_{|S_{1}}|;$

$ii)$ let $\xi \in X^{[2]}$ be any subscheme contained in a smooth surface
as $S_{1}$ (i.e. $\xi \in S_{1}^{[2]}$), then $\xi $ is separated by $|T|$
for $-2\leq y\leq 3$ if $h=3$ and for $-2\leq y\leq 4$ if $h=4,$ moreover $%
|T|$ has no base points;

$iii)$ the generic surface $S_{2}\in $ $|T+\pi ^{*}A|$ is isomorphic to the
the blow up of $Y$ at $5x+9+y$ distinct points, hence to the blow up of $%
\Bbb{P}^{2}$ at $5x+10+y$ distinct points; if we generate \textrm{Num}$%
(S_{2})$ with the pull back $l$ of the generator of \textrm{Pic}$(\Bbb{P}%
^{2})$, the pull back $l_{0}$ of $C_{0}$ $\in Y$ and the classes of the
exceptional divisors, we have that $T_{|S_{2}}\equiv
(x+5)l-(x+1)l_{0}-l_{1}\dots -l_{5x+9+y}$, $|T|\simeq |T_{|S_{2}}|$ and $%
h^{0}(S_{2},T_{|S_{2}})$ $\geq $ $11-y.$
\end{lemma}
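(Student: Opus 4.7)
The plan is to address the three parts in sequence, leaning on Proposition \ref{restriction}, the defining sequence $(*)$, and standard cohomology on $\mathbf{F}_{1}$.

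For $i)$, Proposition \ref{restriction}$(i)$ identifies $\mathcal{E}_{y|\overline{f}}\simeq \mathcal{O}_{\mathbb{P}^{1}}(1)\oplus \mathcal{O}_{\mathbb{P}^{1}}(2)$, so twisting by $\mathcal{O}_{\mathbb{P}^{1}}(-1)$ gives $S_{1}\simeq \mathbb{P}(\mathcal{O}\oplus \mathcal{O}(1))\simeq \mathbf{F}_{1}$, and the tautological class computes to $T_{|S_{1}}\equiv \Gamma_{0}+2\varphi$, very ample on $\mathbf{F}_{1}$ by \cite{h}. Since $A\cdot \overline{f}=1$, restriction gives $(T+\pi^{*}A)_{|S_{1}}\equiv \Gamma_{0}+3\varphi$. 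For $x\gg 0$, the map $H^{0}(X,T+\pi^{*}A)\to H^{0}(S_{1},(T+\pi^{*}A)_{|S_{1}})$ is surjective, since $H^{1}(X,T+\pi^{*}(A-f))=H^{1}(Y,\mathcal{E}_{y}\otimes \mathcal{O}_{Y}(A-f))=0$ follows from twisting $(*)$ by $\mathcal{O}_{Y}(A-f)$ and invoking Lemma \ref{lemgenpos}. Bertini then yields that for generic $S_{2}$, the curve $C = S_{1}\cap S_{2}\in |\Gamma_{0}+3\varphi|$ is smooth and irreducible; $g(C)=0$ follows from adjunction using $K_{\mathbf{F}_{1}}=-2\Gamma_{0}-3\varphi$. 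The equality $|T_{|S_{1}}|_{|C}=|T_{|S_{1}}|$ follows from $0\to T_{|S_{1}}-C\to T_{|S_{1}}\to T_{|C}\to 0$ and the vanishing $h^{0}(-\varphi)=h^{1}(-\varphi)=0$ on $\mathbf{F}_{1}$.

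For $ii)$, Proposition \ref{restriction}$(ii)$--$(iii)$ gives $h^{1}(Y,\mathcal{E}_{y}\otimes \mathcal{O}_{Y}(-\overline{f}))=0$ in the stated ranges, hence $h^{1}(X,T-S_{1})=0$ by Leray. The restriction $H^{0}(X,T)\to H^{0}(S_{1},T_{|S_{1}})$ is therefore surjective, and since $T_{|S_{1}}$ is very ample by $i)$, it separates any $\xi \in S_{1}^{[2]}$; the separating sections lift to $|T|$, preserving separation because $\xi \subset S_{1}$ and the tangent direction of any non-reduced $\xi$ lies in $\mathbb{T}_{P}(S_{1})$. Base-point-freeness follows analogously: for $P\in X$, take $\overline{f}=\rho(\pi(P))$ so $P\in S_{1}$, and lift a section of $T_{|S_{1}}$ nonvanishing at $P$.

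For $iii)$, a generic section of $T+\pi^{*}A$ on $X$ corresponds to a generic section of $\mathcal{E}_{y}\otimes \mathcal{O}_{Y}(A)$ vanishing on a reduced $0$-dimensional subscheme $Z\subset Y$ of length $c_{2}(\mathcal{E}_{y}\otimes \mathcal{O}_{Y}(A))=c_{2}(\mathcal{E}_{y})+c_{1}(\mathcal{E}_{y})\cdot A+A^{2}=(8+y)+(3x+2)+(2x-1)=5x+9+y$; the standard identification realizes $S_{2}$ as the blow-up $b\colon S_{2}\to Y$ at $Z$, and composing with $s$ exhibits $S_{2}$ as the blow-up of $\mathbb{P}^{2}$ at $5x+10+y$ distinct points. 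The tautological formula $T_{\mathcal{E}_{y}\otimes \mathcal{O}(A)|S_{2}}=b^{*}c_{1}(\mathcal{E}_{y}\otimes \mathcal{O}(A))-E$, with $E=l_{1}+\cdots +l_{5x+9+y}$, yields $T_{|S_{2}}=b^{*}(c_{1}(\mathcal{E}_{y})+A)-E=b^{*}(4C_{0}+(x+5)f)-E$; substituting $C_{0}=l_{0}$ and $f=l-l_{0}$ on $\mathbf{F}_{1}$ gives $T_{|S_{2}}\equiv (x+5)l-(x+1)l_{0}-l_{1}-\cdots -l_{5x+9+y}$. The equality $|T|\simeq |T_{|S_{2}}|$ follows from $H^{i}(X,T-S_{2})=H^{i}(X,-\pi^{*}A)=H^{i}(Y,-A)=0$ for $i=0,1$ (Leray along $\rho$, since $\rho_{*}\mathcal{O}(-C_{0})=R^{1}\rho_{*}\mathcal{O}(-C_{0})=0$). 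Finally, the long exact sequence from $(*)$ gives $h^{0}(Y,\mathcal{E}_{y})\geq h^{0}(L)+h^{0}(M\otimes \mathcal{I}_{W})-h^{1}(L)$; a direct computation via Riemann--Roch gives $h^{1}(L)=0$, while Lemma \ref{lemgenpos} yields $h^{0}(M\otimes \mathcal{I}_{W})=h^{0}(M)-w$, so that $h^{0}(L)+h^{0}(M\otimes \mathcal{I}_{W})=11-y$ in both cases $h=3,4$.

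The main obstacle is $iii)$: the careful bookkeeping needed to identify $S_{2}$ with a blow-up of $\mathbb{P}^{2}$ and to translate the tautological formula into the $(l,l_{0},l_{i})$ basis on $\mathbf{F}_{1}$, together with the precise lower bound on $h^{0}(\mathcal{E}_{y})$ extracted from $(*)$ via Lemma \ref{lemgenpos}.
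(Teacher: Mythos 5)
Your proposal is correct and follows essentially the same route as the paper's proof: Proposition \ref{restriction} for part $i)$ and $ii)$, the standard blow-up description of a smooth member of $|T+\pi^{*}A|$ together with the Wu--Chern relation for part $iii)$, and the exact sequences $0\to T-S_{i}\to T\to T_{|S_{i}}\to 0$ for the lifting and the isomorphisms of linear systems. The only (harmless) deviations are that you get smoothness and irreducibility of $C$ from surjectivity of restriction to $S_{1}$ plus Bertini rather than from the very ampleness of $T+\pi^{*}A$ on $X$ (Lemma \ref{jet}), and that you derive $h^{0}(S_{2},T_{|S_{2}})\geq 11-y$ by computing $h^{0}(Y,\mathcal{E}_{y})$ exactly from $(*)$ and Lemma \ref{lemgenpos} --- which in fact gives equality --- instead of the paper's parameter count for plane curves of degree $x+5$ with a point of multiplicity $x+1$.
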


\begin{proof}
$i)$ The first conclusions follow from Proposition \ref{restriction}, part $%
i).$ Moreover we have that $S_{2|S_{1}}=(T+\pi ^{*}A)_{|S_{1}}\equiv \Gamma
_{0}+2\varphi +\varphi $ and for generic $S_{2}\in $ $|T+\pi ^{*}A|$ the
intersection $C:=$ $S_{2}\cap S_{1}$ is a smooth irreducible curve because $%
T+\pi ^{*}A$ is the tautological divisor of $\Bbb{P}(\mathcal{E}_{y}\otimes
\mathcal{O}_{Y}(A)),$ hence it is a very ample divisor of $X$ by Lemma \ref
{jet}. $C$ is rational being a section of $\mathbf{F}_{1}.$

Let us consider the exact sequence $0\rightarrow T_{|S_{1}}-C\rightarrow
T_{|S_{1}}\rightarrow (T_{|S_{1}})_{|C}\rightarrow 0$ on $S_{1},$ we have $%
h^{i}(S_{1},T_{|S_{1}}-C)=h^{i}(S_{1},\Gamma _{0}+2\varphi -(\Gamma
_{0}+3\varphi ))=h^{i}(S_{1},-\varphi )=0$ for $i=0,1,$ so that $%
|T_{|S_{1}}|_{|C}\simeq |T_{|S_{1}}|,$ note that $h^{0}(S_{1},T_{|S_{1}})=5.$

$ii)$ Let us look at the exact sequence: $0\rightarrow T-S_{1}\rightarrow
T\rightarrow T_{|S_{1}}\rightarrow 0$ on $X.$ We know that $T_{|S_{1}}$ is
very ample in any case, so that $|T_{|S_{1}}|$ separates $\xi ,$ moreover we
have $h^{1}(X,T-S_{1})=0$ for $-2\leq y\leq 3,$ if $h=3,$ by Proposition \ref
{restriction}, part $ii),$ and $h^{1}(X,T-S_{1})=0$ for $-2\leq y\leq 4,$ if
$h=4,$ by Proposition \ref{restriction}, part $iii).$ Hence the map $%
|T|\rightarrow |T_{|S_{1}}|$ is surjective and we can separate $\xi $ by $%
|T|.$

Now, let us assume by contradiction that $|T|$ has a base point $P$ and let $%
f_{P}$ be the fibre of $Y$ passing through $\pi (P).$ As we have seen the
map $|T|\rightarrow |T_{|S_{1}}|$ is surjective, where $S_{1}=\Bbb{P}(%
\mathcal{E}_{y|f_{P}}),$ then $P$ would be a base point for $|T_{|S_{1}}|$
too. But it is not possible because $T_{|S_{1}}$ is very ample.

$iii)$ Recall that, for $x>>0,$ $\mathcal{E}_{y}\otimes \mathcal{O}_{Y}(A)$
is very ample by Lemma \ref{jet}. It is well known that $c_{2}[\mathcal{E}%
_{y}\otimes $ $\mathcal{O}_{Y}(A)]$ is the zero cycle of the zero locus of a
generic section of $\mathcal{E}_{y}\otimes \mathcal{O}_{Y}(A)$ and that a
smooth element $S_{2}\in $ $|T+\pi ^{*}A|$ is isomorphic to the blow up of $%
Y $ exactly at the $\deg \{c_{2}[\mathcal{E}_{y}\otimes $ $\mathcal{O}%
_{Y}(A)]\}=5x+9+y$ points which are the zero locus of the corresponding
section of $\mathcal{E}_{y}\otimes \mathcal{O}_{Y}(A)$ (see \cite{bs}
Theorem 11.1.2.). Moreover $H^{1}(Y,L+A)=0$ for $x>>0,$ hence $H^{0}(Y,%
\mathcal{E}_{y}\otimes \mathcal{O}_{Y}(A))=H^{0}(Y,L+A)\oplus
H^{0}(Y,(M+A)\otimes \mathcal{I}_{W})$ and the zero locus of any section $%
\sigma =\sigma ^{\prime }+\sigma ^{\prime \prime }$ of $\mathcal{E}%
_{y}\otimes \mathcal{O}_{Y}(A)$ is a group of $5x+9+y$ points belonging to $%
(\sigma ^{\prime \prime })_{0}$ (note that this is independent of $h).$\ For
generic $\sigma $ the zero locus $(\sigma ^{\prime \prime })_{0}$ is a
smooth curve and $(\sigma )_{0}$ is a set of points linearly equivalent, on
this curve, to the intersection with $(\sigma ^{\prime })_{0},$ so that we
can assume that they are all distinct.

Let $C_{0}^{\prime },f^{\prime },l_{1},\dots ..,l_{5x+9+y}$ be the
generators of \textrm{Num}$(S_{2})$ (the classes of the pull back of $%
C_{0},f $ and the $5x+9+y$ exceptional divisors of the blow up). The
Wu-Chern relation for $\mathcal{E}_{y}\otimes $ $\mathcal{O}_{Y}(A)$ (see
\cite{gh} pag. 606) implies that $(T+\pi ^{*}A)^{2}$ $=\pi ^{*}\{c_{1}[%
\mathcal{E} _{y}\otimes $ $\mathcal{O}_{Y}(A)]\}(T+\pi ^{*}A)$ $-$ $c_{2}[%
\mathcal{E}_{y}\otimes $ $\mathcal{O}_{Y}(A)]$. Hence $(T+\pi
^{*}A)_{|S_{2}} $ $\equiv (\pi _{|S_{2}})^{*}\{c_{1}[\mathcal{E}_{y}\otimes $
$\mathcal{O}_{Y}(A)]\}$ $- $ $l_{1}\dots .-l_{5x+9+y}$ $\equiv $ $(\pi
_{|S_{2}})^{*}(3C_{0}+5f+2A)-l_{1}\dots .-l_{5x+9+y}$ and $T_{|S_{2}}\equiv $
$(\pi _{|S_{2}})^{*}(3C_{0}+5f+A)-l_{1}\dots .-l_{5x+9+y}\equiv
4C_{0}^{\prime }+(5+x)f^{\prime }-l_{1}\dots .-l_{5x+9+y}.$

As $Y$ is the blow up of $\Bbb{P}^{2}$ at one point $P_{0}$, $S_{2}$ is the
blow up of $\Bbb{P}^{2}$ at $5x+10+y$ points, so we can also generate
\textrm{Num}$(S_{2})$ with the pull back $l$ of the numerical class of a
line in $\Bbb{P}^{2}$ and the classes of the $5x+10+y$ exceptional divisors.
If $l_{0}$ is the class of the pull back of the exceptional divisor of the
blow up of $\Bbb{P}^{2}$ at $P_{0}$ we have $C_{0}^{\prime }\equiv l_{0}$
and $f^{\prime }\equiv $ $l-l_{0}$ so that: $T_{|S_{2}}\equiv
(x+5)l-(x+1)l_{0}-l_{1}\dots .-l_{5x+9+y}.$ It is easy to see that $%
h^{0}(S_{2},T_{|S_{2}})$ $\geq $ $11-y.$ Note that $h^{0}(S_{2},T_{|S_{2}})$
$=$ $11-y$ if the $5x+10+y$ points are in general position, but this fact is
not known a priori.

Now let us consider the exact sequence: $0\rightarrow T-S_{2}\rightarrow
T\rightarrow T_{|S_{2}}\rightarrow 0\;$on $X.$ As $T-S_{2}=-\pi ^{*}A$ we
have that $h^{0}(X,-\pi ^{*}A)=0$ and $h^{1}(X,-\pi ^{*}A)=h^{1}(Y,-A)=0,$
so that $H^{0}(X,T)=H^{0}(S_{2},T_{|S_{2}})$ hence $|T|\simeq |T_{|S_{2}}|.$
\end{proof}

\begin{lemma}
\label{lemtg} Let $\mathcal{E}_{y}$ be any vector bundle defined as above by
$(*)$. Let $\xi \in X^{[2]}$ be any subscheme of $X$ having its support at a
single point $P\in X.$ Then $|T|$ separates $\xi $ for $-2\leq y\leq 3$ if $%
h=3$ and for $-2\leq y\leq 4$ if $h=4.$
\end{lemma}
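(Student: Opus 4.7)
The plan is to adapt the reducible-divisor strategy of Proposition \ref{tre} to this very special situation, taking the role of $B$ to be the fiber $\overline{f}$ of $Y=\mathbf{F}_1$ through $\pi(P)$, and $A\equiv C_0+xf$ for $x\gg 0$. Write $\underline{q}\in \Bbb{T}_P(X)$ for the tangent direction specified by $\xi$, and set $S_1:=\pi^{-1}(\overline{f})\simeq \mathbf{F}_1$. I would first split according to whether $\underline{q}\in \Bbb{T}_P(S_1)$ or not. In the former case, the scheme-theoretic characterization recalled in Section 2 gives $\xi\subset S_1$, and Lemma \ref{lemparete} part $ii)$ immediately produces two sections of $|T|$ separating $\xi$ on $S_1$, concluding the proof.

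When $\underline{q}\notin \Bbb{T}_P(S_1)$, I would pick a smooth $S_2\in |T+\pi^*A|$ passing through $P$; such an $S_2$ exists because $T+\pi^*A$ is very ample on $X$ by Lemma \ref{jet}, and its intersection with $S_1$ is a smooth rational curve $C\equiv \Gamma_0+3\varphi$ by Lemma \ref{lemparete} part $i)$. Following the scheme of Proposition \ref{tre}, I would use the very ampleness of $T_{|S_1}$ on $\mathbf{F}_1$ to pick $\sigma_1\in H^0(S_1,T_{|S_1})$ with $\sigma_1(P)=0$, smooth zero locus at $P$, and tangent direction $\underline{t}\in \Bbb{T}_P(S_1)$ avoiding both the fiber of $\pi$ through $P$ and $\Bbb{T}_P(C)$. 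Next I would select $\sigma_2\in H^0(S_2,T_{|S_2})$ with $\sigma_2(P)=0$, $\sigma_2|_C=\sigma_1|_C$, smooth zero locus at $P$, and tangent direction $\underline{v}\in \Bbb{T}_P(S_2)$ chosen so that $\underline{q}\notin \langle \underline{t},\underline{v}\rangle$. The pair $(\sigma_1,\sigma_2)$ glues to a section $\sigma$ of $T_{|\Sigma}$ on $\Sigma=S_1+S_2$, which lifts to $\tau\in H^0(X,T)$ via the vanishing $h^1(X,T-\Sigma)=h^1(Y,-A-\overline{f})=0$ valid for $x\gg 0$. As in Proposition \ref{tre}, $(\tau)_0$ is automatically smooth at $P$ (otherwise it would contain the fiber through $P$, forcing $(\sigma_1)_0$ to contain it too, contradicting the choice of $\underline{t}$), so its tangent plane at $P$ equals $\langle \underline{t},\underline{v}\rangle$ and misses $\underline{q}$. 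A section of $|T|$ not vanishing at $P$—available by the base-point-freeness of $|T|$ established in Lemma \ref{lemparete} part $ii)$—pairs with $\tau$ to separate $\xi$.

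The main obstacle is the construction of $\sigma_2$: satisfying the three simultaneous constraints of vanishing at $P$, restricting to the prescribed section $\sigma_1|_C$ on $C$, and having its tangent direction $\underline{v}$ steer $\langle \underline{t},\underline{v}\rangle$ clear of the fixed line spanned by $\underline{q}$. This reduces to two ingredients: surjectivity of the restriction map $H^0(S_2,T_{|S_2})\to H^0(C,T_{|C})$, which I would obtain from a vanishing of the shape $h^1(S_2,T_{|S_2}-C)=0$, together with enough residual dimension in the kernel $H^0(S_2,T_{|S_2}-C)$ to impose the single linear condition $\sigma_2(P)=0$ while still leaving one degree of freedom to rotate $\underline{v}$ away from the forbidden direction. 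The required vanishings and dimension estimates should come by arguments parallel to those of Proposition \ref{restriction}, using the general-position hypothesis on $W$ through Lemma \ref{lemgenpos}, and this is precisely where I expect the dichotomy $h=3$ versus $h=4$ and the corresponding admissible ranges $-2\leq y\leq 3$ and $-2\leq y\leq 4$ in the statement to enter.
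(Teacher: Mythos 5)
Your outline coincides with the paper's: the same choices $A\equiv C_{0}+xf$ with $x\gg 0$ and $B\equiv f$, the same reduction to assumptions $\beta$) and $\gamma$) of Proposition \ref{tre}, the same dichotomy on whether $\underline{q}\in \Bbb{T}_{P}(S_{1})$ (handled by Lemma \ref{lemparete} $ii)$), and the same cast of characters $S_{1}=\pi^{-1}(f_{P})$, $S_{2}\in |T+\pi^{*}A|$, $C=S_{1}\cap S_{2}$, $\sigma_{1}$, $\sigma_{2}$. The difficulty you flag at the end --- producing $\sigma_{2}$ compatible with $\sigma_{1}$ on $C$, smooth at $P$, with tangent direction steering $\langle\underline{t},\underline{v}\rangle$ away from $\underline{q}$ --- is indeed the crux, but the mechanism you propose for it does not close.

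Concretely: once you fix $\sigma_{2}|_{C}=\sigma_{1}|_{C}$, the remaining freedom is the coset of $H^{0}(S_{2},T_{|S_{2}}-C)$, embedded in $H^{0}(S_{2},T_{|S_{2}})$ by multiplication by the equation $s_{C}$ of $C$. Adding $s_{C}\kappa'$ changes the differential at $P$ by $\kappa'(P)\,ds_{C}(P)$, so your ``one degree of freedom to rotate $\underline{v}$'' produces no rotation at all whenever $P$ lies in the base locus of $|T_{|S_{2}}-C|$ --- and nothing in your argument excludes that (the authors themselves must treat ``$P$ in the base locus of $|T_{|S_{2}}-C|$'' as a separate case in the proof of Theorem \ref{teoy4}). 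Also note that $\sigma_{2}(P)=0$ is not an extra linear condition: it is forced by $\sigma_{2}|_{C}=\sigma_{1}|_{C}$ and $P\in C$. The paper avoids the problem by arguing in the opposite order: it fixes the target direction $\underline{v}$ first, forms the subsystem $\Lambda\subseteq |T_{|S_{2}}|$ of members through the four points $R_{1},\dots,R_{4}$ of $(\sigma_{1})_{0}\cap C$ and tangent to $\underline{v}$ at $P$ (of dimension $\geq 5-y\geq 1$, using $h^{0}(S_{2},T_{|S_{2}})\geq 11-y$ from Lemma \ref{lemparete} $iii)$ --- this is one place the bound on $y$ enters, the other being Lemma \ref{lemparete} $ii)$), and then uses the intersection number $T_{|S_{2}}\cdot C=4$, via the plane model of $S_{2}$ (a degree $x+5$ curve meeting the line corresponding to $C$ in $\geq x+6$ points must contain it), to show that any member of $\Lambda$ singular at $P$ contains $C$; the bad case $\Lambda=C+|T_{|S_{2}}-C|$ is a fixed subsystem and is escaped by changing $\underline{v}$. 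Finally, matching with $\sigma_{1}$ is then done a posteriori via $|T_{|S_{1}}|_{|C}\simeq |T_{|S_{1}}|$ and a rescaling, rather than by gluing first and lifting through $h^{1}(X,T-\Sigma)=0$ as you propose (your lifting is also valid, but it is the construction of $\sigma_{2}$, not the lifting, that needs the real work). You would need to supply an argument of this kind --- or another way around the base-point issue --- for your proof to be complete.
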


\begin{proof}
We will show that assumptions $\beta $) and $\gamma $) of Proposition \ref
{tre} are satisfied with $A\equiv C_{0}+xf$, $x>>0,$ and $B\equiv f.$ It is
easy to see that $\gamma $) it is true for $x>>0,$ so we have to prove that $%
\beta $) is true for any point $P$ and any direction \underline{$q$}$\in
\Bbb{T}_{P}(X).$ In other words, we have to show that there exists a smooth
section $\tau \in $ $|T|$ such that $(\tau )_{0}$ passes through $P$ and its
tangent space at $P$ does not contain $\underline{q}$.

Let us consider $\pi (P)$ and the fibre $f_{P}$ of $Y$ passing through $\pi
(P)$. Let us choose a smooth surface $S_{2}\in $ $|T+\pi ^{*}A|$ passing
through $P$ (recall that $T+\pi ^{*}A$ is very ample for $x>>0$) and a
smooth $S_{1}:=\pi ^{-1}(f_{P});$ so $\beta $)$i$) is fullfilled.

By Lemma \ref{lemparete} $ii)$ we can assume that $\underline{q}\notin \Bbb{T%
}_{P}(S_{1})$.

By recalling that $T_{|S_{1}}$is very ample in any case we can choose $%
\sigma _{1}\in $ $|T_{|S_{1}}|$ such that $(\sigma _{1})_{0}$ is a smooth
curve, passing through $P,$ with tangent vector $\underline{t}\in $ $\Bbb{T}%
_{P}(S_{1})$ and $\beta $)$ii$) is fulfilled. Obviously $\underline{t}\neq $
$\underline{q}.$ We can also choose a generic smooth element $S_{2}\in
|T+\pi ^{*}A|$ such that $S_{2}$ cuts transversely $S_{1}$ along a smooth
rational curve $C$, (hence $\Bbb{T}_{P}(S_{1})\cap \Bbb{T}_{P}(S_{2})=\Bbb{T}%
_{P}(C)$) and $S_{2}$ cuts transversely $(\sigma _{1})_{0}$ at $4$ distinct
points $P=R_{1},\dots ,R_{4}$ $\in C$ by Lemma \ref{lemparete} $i).$ The
independent vectors $\underline{t}$ and $\underline{q}\in \Bbb{T}_{P}(X)$
generates a $2$-plane in $\Bbb{T}_{P}(X)$ cutting $\Bbb{T}_{P}(S_{2})$ along
a vector \underline{$w$}. Now we choose a vector \underline{$v$}$\in \Bbb{T}%
_{P}(S_{2}),$ \underline{$v$}$\neq $\underline{$w$}, and we consider the
linear subsystem $\Lambda $ of $|T_{|S_{2}}|=$ $|(x+5)L-(x+1)l_{0}-l_{1}%
\dots .-l_{5x+9+y}|$ (see Lemma \ref{lemparete} $iii)$) given by those
elements passing through $P=R_{1},\dots ,R_{4}$ and such that their
zero-loci are tangent to \underline{$v$} at $P$. This is possible because $
h^{0}(X,T)=h^{0}(S_{2},T_{|S_{2}})\geq 11-y,$ (see Lemma \ref{lemparete} $%
iii)$), and therefore the (projective) dimension of $\Lambda $ is $\geq
5-y\geq 1$.

Now we want to show that not all the zero loci of the elements of $\Lambda $
are singular at $P.\;$In fact any element of $\Lambda $ whose zero-locus is
singular at $P$ comes from a degree $x+5$ plane curve $\overline{C}$
intersecting a line $\overline{l}$ passing through $P_{0}$ (corresponding to
$C$) with multiplicity at least $x+6$.\ Hence $\overline{l}$ is an
irreducuble component of $\overline{C}$ and the zero-locus of the
corresponding element of $\Lambda $ contains $C.$ Therefore the zero-loci of
all elements of $\Lambda $ are singular at $P$ if and only if $\Lambda
=|T_{|S_{2}}-C|.$ But this is a fixed subspace of $|T_{|S_{2}}|,$ whose
sections have zero-loci having at most a finite number of fixed tangent
vectors at $P.$ If $\Lambda =|T_{|S_{2}}-C|$ all zero-loci of $\Lambda $ are
reducible as the union of $C$ and other curves passing through $P$ with the
same tangent vector \underline{$v$}$,$ so that it suffices to change
suitably the choice of \underline{$v$} to avoid this case.

In conclusion we can assume that not all zero-loci of the elements of $%
\Lambda $ are singular at $P,$ hence that not all of them contain $C.$ Hence
there exists a section $\sigma _{2}\in \Lambda \subseteq |T_{|S_{2}}|$ whose
zero-locus is smooth at $P,$ having \underline{$v$} as tangent vector, such
that \underline{$w$}$\notin <\underline{t},\underline{v}>,$ so that
\underline{$q$}$\notin <\underline{t},\underline{v}>$ too so that $\beta $)$%
iii$) is fullfilled.

As $H^{0}(X,T)=H^{0}(S_{2},T_{|S_{2}})$ we get that there exists $\tau \in
|T|$ such that $\tau _{|S_{2}}=$ $\sigma _{2}$ and $(\tau )_{0}$ does not
contain $S_{1}$ because $(\sigma _{2})_{0}$ does not contain $C.\;$ Hence $%
\sigma _{1}^{^{\prime }}:=\tau _{|S_{1}}$ is a non zero section of $%
|T_{|S_{1}}|$ and we have $\sigma _{1}^{^{\prime }}\in $ $<\sigma _{1}$ $>$
because $\sigma _{1}^{^{\prime }}$ and $\sigma _{1}$ cut the same divisor $%
R_{1}+R_{2}+R_{3}+R_{4}$ on $C$ and $|T_{|S_{1}}|_{|C}\simeq |T_{|S_{1}}|$
by Lemma \ref{lemparete} $i).$ By choosing suitably the generator of $%
<\sigma _{1}$ $>$ we can assume that $\tau _{|S_{1}}=\sigma _{1}$ so that $%
\beta $)$iv$) is fulfilled a fortiori by choosing $\sigma :=\tau
_{|S_{1}\cup S_{2}}$.
\end{proof}

Now we can prove the following result, stating the very ampleness of almost
all vector bundles $\mathcal{E}_{y}$ when $h=3.$

\begin{theorem}
\label{teoy3} Let $\mathcal{E}_{y}$ be any vector bundle defined as above by
$(*)$ with $-2\leq y\leq 2$ and $h=3.$ Then $\mathcal{E}_{y}$ is very ample.
\end{theorem}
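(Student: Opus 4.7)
The plan is to verify that $|T|$ separates every $\xi \in X^{[2]}$. The non-reduced case---$\xi$ supported at a single point with a prescribed tangent direction---is already settled by Lemma \ref{lemtg}, whose range $-2 \leq y \leq 3$ for $h=3$ covers our hypothesis. For a reduced $\xi = \{P,Q\}$ with $\rho(\pi(P)) = \rho(\pi(Q))$, both points lie in the common fiber surface $S_1 = \pi^{-1}(\rho^{-1}(\rho(\pi(P))))$, and Lemma \ref{lemparete}(ii) separates $\xi$.

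It remains to treat a reduced $\xi = \{P,Q\}$ with $\rho(\pi(P)) \neq \rho(\pi(Q))$. The idea is to find a divisor $\gamma \in |C_0+f|$ on $Y$ containing both $\pi(P)$ and $\pi(Q)$, set $\Gamma = \pi^{-1}(\gamma)$, and exploit the vanishing $h^1(X, T - \Gamma) = h^1(Y, \mathcal{E}_y \otimes \mathcal{O}_Y(-(C_0+f))) = 0$ from Proposition \ref{restriction}(viii) (valid for $-2 \leq y \leq 2$ when $h=3$, independently of whether $\gamma$ is irreducible) to lift any separating section from $\Gamma$ up to $X$. When $\pi(P),\pi(Q) \notin C_0$, the line $\ell \subset \Bbb{P}^2$ through $s(\pi(P))$ and $s(\pi(Q))$ avoids $P_0$---otherwise the two points would share a $\rho$-fiber---so its strict transform gives a smooth irreducible $\gamma \in |C_0+f|$ meeting $W$ in at most two points (by the general-position assumption on $P_0 \cup s(W)$), and Proposition \ref{restriction}(vii) makes $T|_\Gamma$ very ample, separating $\xi$ directly. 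If instead both $\pi(P),\pi(Q) \in C_0$, then both lie in $S_C := \pi^{-1}(C_0)$, and Proposition \ref{restriction}(iv) combined with the vanishing from Proposition \ref{restriction}(vi) yields separation.

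The main obstacle is the remaining subcase in which exactly one of $\pi(P),\pi(Q)$ lies on $C_0$; say $\pi(P) \in C_0$ and $\pi(Q) \notin C_0$. Then the only element of $|C_0+f|$ passing through both is the reducible divisor $\gamma = C_0 + f_Q$, with $f_Q$ the $\rho$-fiber through $\pi(Q)$, so $\Gamma = S_C \cup S_Q$ with $S_Q := \pi^{-1}(f_Q)$; the two components meet along the single $\pi$-fiber $\phi = \pi^{-1}(C_0 \cap f_Q)$, and neither $P$ nor $Q$ lies on $\phi$. Since $\mathrm{Ext}^1(\mathcal{O}(1),\mathcal{O}(1)) = 0$ on $\Bbb{P}^1$, Proposition \ref{restriction}(iv) forces $\mathcal{E}_y|_{C_0} \cong \mathcal{O}(1) \oplus \mathcal{O}(1)$, giving $S_C \cong \Bbb{P}^1 \times \Bbb{P}^1$ with $T|_{S_C} \equiv \mathcal{O}(1,1)$, while Proposition \ref{restriction}(i) identifies $S_Q$ with $\mathbf{F}_1$ on which $T|_{S_Q}$ is very ample. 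I would construct the required separating section of $T|_\Gamma$ by first picking $\sigma_2 \in H^0(S_Q, T|_{S_Q})$ with $\sigma_2(Q) \neq 0$ and then extending $\sigma_2|_\phi$ to $\sigma_1 \in H^0(S_C, T|_{S_C})$ with $\sigma_1(P) = 0$, using the surjectivity of the restriction $H^0(\mathcal{O}(1,1)) \to H^0(\phi, \mathcal{O}(1))$ together with the base-point-freeness of its kernel $H^0(\mathcal{O}(0,1))$ to secure room at $P \notin \phi$. Swapping the roles of $P$ and $Q$ gives the other required section, and Proposition \ref{restriction}(viii) lifts both to elements of $|T|$, completing the separation in this subcase.
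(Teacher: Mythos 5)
Your proposal is correct and follows essentially the same route as the paper: the same reduction via Lemmas \ref{lemtg} and \ref{lemparete}, the same three-case split according to the position of $\pi (P),\pi (Q)$ relative to $C_{0}$, and the same use of Proposition \ref{restriction} parts $iv)$, $vi)$, $vii)$, $viii)$ to separate on $\pi ^{-1}(\gamma )$ for $\gamma \in |C_{0}+f|$ and then lift. The only (harmless) divergence is in the mixed case, where you glue sections on $\pi ^{-1}(C_{0})\cup \pi ^{-1}(f_{Q})$ by extending the restriction to the common fibre $\phi $ via surjectivity onto $H^{0}(\phi ,\mathcal{O}(1))$ together with global generation of the kernel, whereas the paper matches the zero loci at a single point $H$ of the common fibre; both arguments are valid.
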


\begin{proof}
Let $T$ be the tautological divisor of $X=\Bbb{P}(\mathcal{E}_{y})$, let $%
\xi $ be any fixed element of $X^{[2]}$ and let $s:Y\rightarrow \Bbb{P}^{2}$
be the blow up of $\Bbb{P}^{2}$ at $P_{0}.$ We have to prove that $|T|$
separates $\xi .$ By Lemmas \ref{lemtg} and \ref{lemparete} we know that we
have to consider only the cases in which the support of $\xi $ consists of a
couple of distinct points $P,Q$ projecting on different fibres of $Y.$

\textbf{Case 1:} neither $\pi (P)$ nor $\pi (Q)$ belong to $C_{0}.$ We use a
slightly different version of the proof of Theorem \ref{Valma} i.e. we use
the linear system $|C_{0}+f|$ which is not very ample. However, in this
case, there exists a smooth element $\gamma \in |C_{0}+f|$ passing through $%
\pi (P)$ and $\pi (Q).$ Moreover $\gamma $ passes through two points of $W$
at most, because $\gamma $ corresponds to the unique line passing through $%
s[\pi (P)]$ and $s[\pi (Q)]$ on $\Bbb{P}^{2}$ and $s(W)$ is a set of points
in general position on $\Bbb{P}^{2}.$ Very ampleness of $\mathcal{E}_{y}$
follows from Proposition \ref{restriction} $vii)$ and $viii)$ as $T_{|\Bbb{P}%
(\mathcal{E}_{y|\gamma })}$ is very ample and $|T|\rightarrow |T_{|\Bbb{P}(%
\mathcal{E}_{y|\gamma })}|$ is surjective.

\textbf{Case 2:} $\pi (P)$ and $\pi (Q)$ belong to $C_{0}.$ Very ampleness
of $\mathcal{E}_{y}$ follows from Proposition \ref{restriction} $iv)$ and $%
vi)$ as $T_{|\Bbb{P}(\mathcal{\ \ E}_{y|C_{0}})}$ is very ample and $%
|T|\rightarrow |T_{|\Bbb{P}(\mathcal{E}_{y|C_{0}})}|$ is surjective.

\textbf{Case 3:} $\pi (Q)$ $\in C_{0}$ and $\pi (P)\notin C_{0}$. Let $f_{P}$
be the fibre of $Y$ passing through $\pi (P)$ and let $\Sigma \subset X$ be
the reducible surface $\Bbb{P}(\mathcal{E}_{y|C_{0}})\cup \Bbb{P}(\mathcal{E}%
_{y|f_{P}}).$ Let $R\in Y$ be the unique point $f_{P}\cap C_{0}$ so that $%
\Bbb{P}(\mathcal{E}_{y|C_{0}})\cap \Bbb{P}(\mathcal{E}_{y|f_{P}})=\mathrm{F}%
_{R}$ (as in the proof of Proposition \ref{tre}). By Proposition \ref
{restriction} $iv),$ we know that $T_{|\Bbb{P}(\mathcal{E}_{y|C_{0}})}$ is
very ample, so that we can take a smooth element $\sigma _{2}\in |T_{|\Bbb{P}%
(\mathcal{E}_{y|C_{0}})}|$ such that $Q\notin (\sigma _{2})_{0}$ and $%
(\sigma _{2})_{0}$ cuts $\mathrm{F}_{R}$ transversely at a point $H.$ By
Proposition \ref{restriction} $i),$ and $ii),$ we know that $T_{|\Bbb{P}(%
\mathcal{E}_{y|f_{P}})}$ is very ample and that $|T_{|\Bbb{P}(\mathcal{E}
_{y|f_{P}})}|=|\Gamma _{0}+2\varphi |$ on a surface $\mathbf{F}_{1}.$ It is
possible to choose an element (not necessarily smooth) $\sigma _{1}\in |T_{|%
\Bbb{P}(\mathcal{E}_{y|f_{P}})}|$ such that $(\sigma _{1})_{0}$ passes
through $P$ and cuts $\mathrm{F}_{R}$ transversely at $H.$ Note that $%
\mathbf{F}_{1}$ is embedded by $|\Gamma _{0}+2\varphi |$ as a scroll in $%
\Bbb{P}^{4}$, in such a way that $\mathrm{F}_{R}$ is a fibre of the scroll,
but $P\notin \mathrm{F}_{R}$, hence it is not possible that all hyperplanes
passing through $P$ and $H$ contain the line $\mathrm{F}_{R}$ in $\Bbb{P}%
^{4}.$

Now the pair $(\sigma _{1},\sigma _{2})$ is a section of $|T_{|\Sigma }|$
separating $P$ from $Q$ and, by Proposition \ref{restriction} $viii),$ we
can lift this element to an element of $|T|$ acting in the same way and we
are done. In fact, note that the proof of Proposition \ref{restriction} $%
viii)$ works even when $\gamma =C_{0}\cup f_{P}.$

Obviously if $\pi (Q)$ $\notin C_{0}$ and $\pi (P)\in C_{0}$ we can
interchange the roles of $P$ and $Q$ in the previous argument.
\end{proof}

We can also prove the very ampleness of $\mathcal{E}_{3}$ when $h=4,$ but we
need other Lemmas.

\begin{lemma}
\label{lemReider} Let $P_{0},P_{1},...,P_{8}$ be $9$ distinct points in $%
\Bbb{P}^{2}$, lying on a smooth cubic curve $\mathcal{C}.$ Assume that the
complete linear system $\mathcal{L}$ of quartics passing through them has no
base points$.$ Then $\mathcal{L}$ is very ample.
\end{lemma}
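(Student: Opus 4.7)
The plan is to work on the blow-up $s\colon S \to \Bbb{P}^{2}$ at $P_{0},\ldots ,P_{8}$, with exceptional divisors $e_{0},\ldots ,e_{8}$ and hyperplane pullback $H = s^{*}\mathcal{O}_{\Bbb{P}^{2}}(1)$. The linear system $\mathcal{L}$ corresponds to the divisor $D := 4H - \sum_{i=0}^{8}e_{i}$ on $S$, and very ampleness of $\mathcal{L}$ is exactly very ampleness of $D$. Since $K_{S} = -3H + \sum e_{i}$, I would decompose $D = K_{S} + N$ with $N := 7H - 2\sum e_{i}$; one computes $N^{2} = 49 - 36 = 13 \geq 10$, exactly the threshold of Reider's very ampleness theorem, so the strategy is to apply Reider to this decomposition.

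First, I would show $N$ is ample. By the Nakai--Moishezon criterion (using $N^{2} > 0$) it suffices to check $N \cdot C > 0$ for every irreducible curve $C \subset S$. For $C = e_{i}$, $N \cdot e_{i} = 2$; for $C = \tilde{\mathcal{C}}$, the strict transform of $\mathcal{C}$ (of class $3H - \sum e_{i}$, since $\mathcal{C}$ is smooth at each $P_{i}$), $N \cdot \tilde{\mathcal{C}} = 21 - 18 = 3$; and for the strict transform $\tilde{C}^{\prime }$ of any other irreducible plane curve $C^{\prime } \neq \mathcal{C}$ of degree $a \geq 1$ with multiplicity $\mu _{i}$ at $P_{i}$, Bezout applied to the proper intersection $C^{\prime } \cdot \mathcal{C} = 3a$ together with the smoothness of $\mathcal{C}$ at the $P_{i}$'s yields $\sum \mu _{i} \leq 3a$, whence $N \cdot \tilde{C}^{\prime } = 7a - 2\sum \mu _{i} \geq a \geq 1$. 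Thus $N$ is ample, in particular nef.

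Reider's theorem then reduces very ampleness of $D = K_{S} + N$ to the non-existence of an effective divisor $E$ on $S$ satisfying one of $N \cdot E = 0$ and $E^{2} \in \{-1,-2\}$; $N \cdot E = 1$ and $E^{2} \in \{-1,0\}$; or $N \cdot E = 2$ and $E^{2} = 0$. Writing $E \equiv aH - \sum b_{i}e_{i}$ with $a \geq 0$ and $b_{i} \in \Bbb{Z}$, one has $N \cdot E = 7a - 2\sum b_{i}$ and $E^{2} = a^{2} - \sum b_{i}^{2}$, so each configuration fixes $\sum b_{i}$ (together with the parity of $a$, via $7a - 2\sum b_{i} \in \{0,1,2\}$) and $\sum b_{i}^{2}$. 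A short case analysis using the Cauchy--Schwarz bound $\sum b_{i}^{2} \geq (\sum b_{i})^{2}/9$, refined by the exact integer minimum $9k^{2} + r(2k+1)$ of $\sum b_{i}^{2}$ over nine integer variables of fixed sum $s = 9k + r$ (with $0 \leq r < 9$), shows that in every case the required value of $\sum b_{i}^{2}$ is strictly smaller than the smallest integers can realize. The purely exceptional case $a = 0$ is dismissed directly, since $E = e_{j}$ gives $N \cdot E = 2$ and $E^{2} = -1 \neq 0$.

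The main obstacle is the tight integer bookkeeping at the Reider step: the excess $N^{2} - 10 = 3$ is small, and the elimination succeeds in each configuration essentially because the number of exceptional divisors is exactly $9$, so that the integer form of Cauchy--Schwarz bites at just the right threshold. As a side remark, the base-point-free hypothesis on $\mathcal{L}$ is not explicitly used in this approach; it is rather an a posteriori consequence of the very ampleness established.
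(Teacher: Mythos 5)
Your proposal is correct and follows essentially the same route as the paper: the paper likewise passes to the blow-up, writes $4l-\sum l_{i}=K_{S}+M$ with $M\equiv 7l-2\sum l_{i}$ and $M^{2}=13$, proves ampleness of $M$ by Nakai--Moishezon using intersection with the strict transform of the cubic, and then rules out Reider's exceptional divisors. The only (minor) divergence is the last step, where the paper bounds the candidate classes via the geometric constraint $3a\geq a_{0}+\dots +a_{8}$ coming from the cubic together with the irreducibility structure of Reider's divisors, whereas you use the purely arithmetic Cauchy--Schwarz/integer-minimum bound; both eliminations succeed, provided (as you note) the $a=0$ classes are dismissed separately by effectivity, since for instance $\sum b_{i}=0$, $\sum b_{i}^{2}=2$ is realizable by integers but not by a nonzero effective divisor contracted by the blow-up.
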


\begin{proof}
Let $S$ be the blow up of $\Bbb{P}^{2}$ at $P_{0},P_{1},...,P_{8}.$ We can
generate \textrm{Pic}$(S)\simeq \mathrm{Num}(S)$ with the pull back $l$ of
the numerical class of a line in $\Bbb{P}^{2}$ and the classes $l_{i}$ of
the $9$ exceptional divisors. In this notation the class of any irreducible
curve $\gamma $ on $S$ is either one of the $l_{i}$ or a class of the
following type: $al-a_{0}l_{0}-...-a_{8}l_{8},$ for suitable integers $a\geq
1$ and $a_{i}\geq 0,$ as $\gamma $ comes from an irreducible plane curve.
The curve $\mathcal{C}$ in $\Bbb{P}^{2}$ gives rise to a curve $\overline{%
\mathcal{C}}$ on $S$ such that $\overline{\mathcal{C}}\equiv
3l-l_{0}-...-l_{8},$ moreover for any irreducible curve $\gamma $ on $S,$
different from $l_{i},$ we have: $0\leq \overline{\mathcal{C}}\gamma
=3a-a_{0}-...-a_{8};$ it follows that the class $%
al-a_{0}l_{0}-...-a_{8}l_{8} $ of any irreducible curve $\gamma $ on $S,$
different from $l_{i},$ must satisfy the condition: $3a\geq a_{0}+...+a_{8}.$

Very ampleness of $\mathcal{L}$ is equivalent to very ampleness of $%
|4l-l_{0}-l_{1}....-l_{8}|$ on $S$ and this will be established via Reider's
method (see \cite{dl} Theorem 2.1). Let $M$ be a divisor on $S$ such that $%
4l-l_{0}-l_{1}....-l_{8}\equiv K_{S}+M.$ It is $M\equiv
7l-2l_{0}-2l_{1}....-2l_{8}$. To be able to apply Reider's Theorem, $M$ must
be big and nef with $M^{2}\geq 10.$ Obviously $M^{2}=13,$ moreover $%
Ml_{i}=2, $ for any $i,$ and $M\gamma \geq a\geq 1$ for any other
irreducible curve $\gamma $ on $S,$ thanks to the above condition. It
follows that $M$ is ample (Nakai-Moishezon criterion, see \cite{h} pag. 365)
and therefore big and nef.

Now, if $E$ is a candidate effective divisor that, according to Reider,
could cause $K_{S}+M$ not to be very ample, it must be as in one of these
cases:

$1)$ $E\equiv l-\sum\limits_{i=0}^{8}a_{i}l_{i},$ $E$ irreducible, $ME=1,$ $%
\sum\limits_{i=0}^{8}a_{i}=3,$ $0\leq a_{i}\leq 1$ for any $i;$

$2)$ $E\equiv 2l-\sum\limits_{i=0}^{8}a_{i}l_{i},$ $E$ irreducible, $ME=2,$ $%
\sum\limits_{i=0}^{8}a_{i}=6;$

$3)$ $E=E_{1}+E_{2}$, $E_{1}\neq E_{2},$ where each $E_{j}$ is irreducible, $%
E_{j}\equiv l-\sum\limits_{i=0}^{8}a_{ij}l_{i},$ $ME=2,$ $%
\sum\limits_{i=0}^{8}a_{ij}=3,$ $0\leq a_{ij}\leq 1$ for any $i$ and $j.$

In all cases it is $E^{2}\leq -2,$ not satisfying Reider's conditons, hence $%
\mathcal{L}$ is very ample.
\end{proof}

\begin{lemma}
\label{claim} Let $\mathcal{E}_{y}$ be any vector bundle defined as above by
$(*)$ with $y=3$ and $h=4$ (hence $w=7$). Let $\overline{f}$ be any fixed
fibre of $Y$ and let $S_{1}:=\Bbb{P}(\mathcal{E}_{3|\overline{f}})\subset X.$
Then:

$i)$ the generic element $S$ of the linear system $|T-S_{1}|$ is smooth and
irreducible;

$ii)$ any linear subsystem of $|T-S_{1}|$ consisting of elements which are
all singular and or reducible has codimension at least two;

$iii)$ the restriction map $|T|\rightarrow |T|_{|S}$ is surjective for any
smooth $S\in $ $|T-S_{1}|$;

$iv)$ $T_{|S}$ is very ample for any generic smooth $S\in $ $|T-S_{1}|$.
\end{lemma}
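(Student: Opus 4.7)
The plan is to deduce all four parts from analysis of the twisted sequence
\[
0 \to C_0 \to \mathcal{E}_3(-\overline{f}) \to (2C_0+3f)\otimes \mathcal{I}_W \to 0,
\]
obtained by tensoring $(*)$ with $\mathcal{O}_Y(-\overline{f})$, since $L-\overline{f}=C_0$ and $M-\overline{f}=2C_0+3f$. Lemma~\ref{lemgenpos} applied to $2C_0+3f$ (whose $h^0=9\geq w=7$) together with $h^0(C_0)=1$ and $h^1(C_0)=0$ yields $h^0(X,T-S_1)=3$, so $|T-S_1|$ is a $\mathbb{P}^2$.

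For (i) and (ii), I will apply Lemma~\ref{Belt-Som} to $\mathcal{E}_3(-\overline{f})$: every singular element of $|T-S_1|$ has the form $\pi^{-1}(Z)\cup S'$ with $Z\equiv aC_0+bf$ effective ($a,b\geq 0$) and $S'\in|T-\pi^*(\overline{f}+Z)|$. A short case analysis of $h^0(\mathcal{E}_3(-\overline{f}-Z))$ via the analogous twisted sequence (using Lemma~\ref{lemgenpos} when $h^0\geq w$ and general position of $W$ otherwise) shows this vanishes except for $Z=0$ and $Z=C_0$; in the latter case $h^0((C_0+3f)\otimes\mathcal{I}_W)=0$ (Lemma~\ref{lemgenpos} with $h^0(C_0+3f)=7=w$) and $h^0(\mathcal{E}_3(-\overline{f}-C_0))=1$, producing a single reducible element $\pi^{-1}(C_0)\cup S'\in|T-S_1|$. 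The maximality clause of Lemma~\ref{Belt-Som} then yields (i), and (ii) follows since a linear subsystem of $\mathbb{P}^2$ contained in a single point is that point. Part (iii) is a one-line cohomology check: $T-S\equiv \pi^*\overline{f}$ and, by Leray, $H^1(X,\pi^*\overline{f})=H^1(Y,\overline{f})=H^1(\mathbb{P}^1,\mathcal{O}(1))=0$, so the restriction $H^0(X,T)\to H^0(S,T|_S)$ is surjective.

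For (iv), I will apply Lemma~\ref{lemReider}. A Wu--Chern computation parallel to Lemma~\ref{lemparete}(iii) presents a smooth $S$ as the blow-up of $\mathbb{P}^2$ at nine points $P_0,P_1,\dots,P_8$ --- with $P_0$ the center of $Y\to\mathbb{P}^2$ and $P_1,\dots,P_8$ the $c_2(\mathcal{E}_3(-\overline{f}))=8$ zeros of the section $s$ defining $S$ --- and $T|_S\equiv 4l-l_0-\cdots -l_8$. The image $(\bar s)_0$ of $s$ in the right-hand term of the extension lies in the pencil $|(2C_0+3f)\otimes\mathcal{I}_W|$ and passes through $W$, through the $8$ zeros of $s$, and (since $C_0\cdot(2C_0+3f)=1$) through $P_0$ after contracting $C_0$, so its image in $\mathbb{P}^2$ is a cubic containing all nine $P_i$. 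Because $(2C_0+3f)^2=8$, this pencil has reduced $0$-dimensional base locus ($W$ plus one residual simple base point, by general position of $W$), so Bertini produces a smooth generic member and the nine points lie on a smooth cubic $\mathcal{C}$. Base-point-freeness of the quartic system through the $P_i$ then follows automatically: a candidate base point off $\mathcal{C}$ is excluded by quartics of the form $\mathcal{C}\cup L$ for a line $L$ avoiding it, while on $\mathcal{C}$ the restricted system is a degree-$3$ line bundle on a genus-$1$ curve (hence base-point free), with surjective restriction from $0\to\mathcal{O}_{\mathbb{P}^2}(1)\to\mathcal{I}_{\{P_i\}}(4)\to\mathcal{O}_{\mathcal{C}}(3)\to 0$. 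Lemma~\ref{lemReider} then delivers the very ampleness of $T|_S$.

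The main obstacle is (iv), specifically verifying that the base locus of $|(2C_0+3f)\otimes\mathcal{I}_W|$ is genuinely reduced and that the nine $P_i$ are distinct for a generic $s$; both should follow from the general position hypothesis on $W$ of Definition~\ref{defposiz} but will require careful bookkeeping.
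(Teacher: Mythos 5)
Your proposal is correct and follows essentially the same route as the paper: the same twist of $(*)$ by $\mathcal{O}_Y(-\overline{f})$ giving $h^{0}(X,T-S_{1})=3$, the same case analysis of reducible elements $\pi ^{-1}(Z)\cup S^{\prime }$ (the paper phrases it via the proof of Proposition \ref{due}, which rests on Lemma \ref{Belt-Som} exactly as you use it) isolating $Z=C_{0}$ as the unique reducible member, the same one-line cohomology for $iii)$, and the same Wu--Chern computation plus Lemma \ref{lemReider} for $iv)$. The only local difference is your base-point-freeness check for the quartic system, argued directly via $\mathcal{C}\cup L$ and the degree-$3$ restriction to the cubic, where the paper instead quotes Lemma \ref{lemparete} $ii)$ together with the surjectivity of $|T|\rightarrow |T_{|S}|$; both are valid.
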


\begin{proof}
For simplicity let us write $\mathcal{E}$ instead of $\mathcal{E}_{3}.$ By $%
(*)$ it follows that $h^{0}(Y,\mathcal{E})=8.$ By tensorizing $(*)$ with $%
\mathcal{O}_{Y}(-$ $\overline{f})$ we get that $h^{0}(Y,\mathcal{E}\otimes
\mathcal{O}_{Y}(-\overline{f}))=h^{0}(X,T-S_{1})=3.$

$i)$ and $ii).$ Let $\Lambda $ be any linear subsystem of $|T-S_{1}|$ such
that every element of $\Lambda $ is singular or reducible. To prove $i)$ and
$ii)$ we have to show that $\dim (\Lambda )\leq 0.$ Recalling the proof of
Proposition \ref{due}, we know that any element of $|T-S_{1}|$ is singular
if and only if it is reducible and that $\Lambda \neq \emptyset $ if and
only if there exists an effective divisor $D=aC_{0}+\rho ^{*}B\in \mathrm{Pic%
}(Y),$ with $\deg (B)=b,$ $a\geq 0,$ $b\geq 0,$ such that $T-S_{1}-\pi ^{*}D$
is effective and, in this case, $\Lambda =|T-S_{1}-\pi ^{*}D|+|\pi ^{*}D|$
with $h^{0}(X,T-S_{1}-\pi ^{*}D)=1$ and $h^{0}(X,\pi ^{*}D)-1=\dim (\Lambda
) $ or $h^{0}(X,\pi ^{*}D)=1$ and $h^{0}(X,T-S_{1}-\pi ^{*}D)-1=\dim
(\Lambda ) $.

From the exact sequence:\newline
$0\rightarrow C_{0}+f-\overline{f}-D\rightarrow \mathcal{E}\otimes \mathcal{O%
}_{Y}(-\overline{f}-D)\rightarrow (2C_{0}+4f-\overline{f}-D)\otimes \mathcal{%
I}_{W}\rightarrow 0$\newline
we see that $h^{0}(X,T-S_{1}-\pi ^{*}D)=h^{0}(Y,\mathcal{E}\otimes \mathcal{O%
}_{Y}(-\overline{f}-D)$ can be positive only if $a\leq 1.$

Let us assume $a=1.$ In this case we have to consider the exact sequence:%
\newline
$0\rightarrow -bf\rightarrow \mathcal{E}\otimes \mathcal{O}_{Y}(-\overline{f}
-D)\rightarrow (C_{0}+3f-bf)\otimes \mathcal{I}_{W}\rightarrow 0.$

As $P_{0}\cup s(W)$ are in general position it is easy to see that $%
h^{0}(Y,(C_{0}+3f-bf)\otimes \mathcal{I}_{W})=0$ for any $b\geq 0$ . If $b>0$
$h^{0}(X,T-S_{1}-\pi ^{*}D)=h^{0}(Y,\mathcal{E}\otimes \mathcal{\ O}_{Y}(-%
\overline{f}-D)=0;$ if $b=0$ $h^{0}(X,T-S_{1}-\pi ^{*}D)=h^{0}(Y,\mathcal{E}%
\otimes \mathcal{O}_{Y}(-\overline{f}-D)=1$ and $h^{0}(X,\pi
^{*}D)=h^{0}(Y,D)=1,$ so that $\dim (\Lambda )=0$ and we are done.

Let us assume $a=0$ (hence $b\geq 1).$ In this case we have to consider the
exact sequence: \newline
$0\rightarrow C_{0}-bf\rightarrow \mathcal{E}\otimes \mathcal{O}_{Y}(-%
\overline{f}-D)\rightarrow (2C_{0}+3f-bf)\otimes \mathcal{I}_{W}\rightarrow
0.$

As $P_{0}\cup s(W)$ are in general position it is easy to see that $%
h^{0}(Y,(2C_{0}+3f-bf)\otimes \mathcal{I}_{W})=0,$ moreover $%
h^{0}(Y,C_{0}-bf)=0,$ so that $h^{0}(X,T-S_{1}-\pi ^{*}D)=h^{0}(Y,\mathcal{E}
\otimes \mathcal{O}_{Y}(-\overline{f}-D)=0$ and $\Lambda =\emptyset .$

$iii)$ Let $S$ be a smooth element of $|T-S_{1}|$ and let us consider the
exact sequence: $0\rightarrow T-S\rightarrow T\rightarrow T_{|S}\rightarrow
0.\;$ As $T-S=\pi ^{*}\overline{f}$ we have that $h^{0}(X,\pi ^{*}\overline{f%
})=h^{0}(Y,\overline{f})=2$ and $h^{1}(X,\pi ^{*}\overline{f})=h^{1}(Y,%
\overline{f})=0,$ so that $h^{0}(X,T-S)=2,$ $h^{0}(S,T_{|S})=8-2=6$ and the
map $H^{0}(X,T)\rightarrow H^{0}(S,T_{|S})$ is surjective.\

$iv)$ $S,$ being a smooth generic element of $|T-\pi ^{*}\overline{f}|,$ is
isomorphic to the blow up of $Y$ at $\deg \{c_{2}[\mathcal{E}\otimes $ $%
\mathcal{O}_{Y}(-\overline{f})]\}=8$ distinct points (see the proof of Lemma
\ref{lemparete}, $iii)$). Let $C_{0}^{\prime },f^{\prime },l_{1},.....,l_{8}$
be the generators of \textrm{Num}$(S)$ as in Lemma \ref{lemparete} $iii)$
for $S_{2}$. The Wu-Chern relation for $\mathcal{E}\otimes $ $\mathcal{O}%
_{Y}(-\overline{f})$ implies that $(T-\pi ^{*}\overline{f})^{2}$ $=\pi
^{*}\{c_{1}[\mathcal{E}\otimes $ $\mathcal{O}_{Y}(-\overline{f})]\}(T-\pi
^{*}\overline{f})$ $-$ $c_{2}[\mathcal{E}\otimes $ $\mathcal{O}_{Y}(-%
\overline{f})]$. Hence $(T-\pi ^{*}\overline{f})_{|S}$ $\equiv (\pi
_{|S})^{*}\{c_{1}[\mathcal{E}\otimes $ $\mathcal{O}_{Y}(-\overline{f})]\}$ $%
- $ $l_{1}....-l_{8}$ $\equiv $ $(\pi
_{|S})^{*}(3C_{0}+5f-2f)-l_{1}....-l_{8}$ and $T_{|S}\equiv $ $(\pi
_{|S})^{*}(3C_{0}+5f-f)-l_{1}....-l_{8}\equiv 3C_{0}^{\prime }+4f^{\prime
}-l_{1}....-l_{8}.$

As $Y$ is the blow up of $\Bbb{P}^{2}$ at one point $P_{0}$, $S$ is
isomorphic to the blow up of $\Bbb{P}^{2}$ at $9$ distinct points, so we can
also generate \textrm{Num}$(S)$ with the pull back $l$ of the numerical
class of a line in $\Bbb{P}^{2}$ and the classes of the $9$ exceptional
divisors. If $l_{0}$ is the class of the pull back of the exceptional
divisor of the blow up of $\Bbb{P}^{2}$ at $P_{0}$ we have $C_{0}^{\prime
}\equiv l_{0}$ and $f^{\prime }\equiv $ $l-l_{0}$ so that: $T_{|S}\equiv
4l-l_{0}-l_{1}....-l_{8}.$

To show that $T_{|S}$ is very ample we can apply Lemma \ref{lemReider}: $%
h^{0}(S,T_{|S})=6$ by $iii);$ $|T_{|S}|$ does not have base points, because $%
|T|$ does not have base points by Lemma \ref{lemparete} $ii)$ and $
|T|\rightarrow |T_{|S}|$ is surjective; the $9$ distinct points lie on a
smooth cubic because the zero-locus of a generic section of $\mathcal{E}%
\otimes $ $\mathcal{O}_{Y}(-\overline{f})$ is a set of $8$ distinct points
belonging to a generic element of the linear system $|(2C_{0}+3f)\otimes
\mathcal{I}_{W}|$ $=|(3l-l_{0})\otimes \mathcal{I}_{W}|$ on $Y,$ (see the
proof of Lemma \ref{lemparete}, $iii)$) corresponding to a smooth plane
cubic curve passing through $P_{0}\cup s(W),$ where $s:Y\rightarrow \Bbb{P}%
^{2}$ is the blow up.
\end{proof}

\begin{theorem}
\label{teoy4} Let $\mathcal{E}_{y}$ be any vector bundle defined as above by
$(*)$ with $y=3$ and $h=4$ (hence $w=7$), then $\mathcal{E}_{3}$ is very
ample.
\end{theorem}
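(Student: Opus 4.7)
The plan is to adapt the structure of the proof of Theorem~\ref{teoy3} to the case $h=4$, $y=3$, changing only the arguments that genuinely break. The preliminary reductions carry over unchanged: Lemma~\ref{lemtg} and Lemma~\ref{lemparete}(ii) both hold for $h=4$ in the entire range $-2 \le y \le 4$, so any non-reduced length-two subscheme of $X$ and any reduced length-two subscheme supported in a single fibre of $\pi$ is separated by $|T|$, and moreover $|T|$ is base-point free. Only reduced $\xi = \{P, Q\}$ with $\pi(P) \neq \pi(Q)$ therefore remain to be handled, and I would split into two cases exactly as in Theorem~\ref{teoy3}.

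For Case~1, when neither $\pi(P)$ nor $\pi(Q)$ lies on $C_0$, I would repeat Case~1 of Theorem~\ref{teoy3} verbatim, using the smooth rational curve $\gamma \in |C_0 + f|$ through $\pi(P)$ and $\pi(Q)$, which is the strict transform under $s:Y \to \Bbb{P}^2$ of the line in $\Bbb{P}^2$ joining $s(\pi(P))$ and $s(\pi(Q))$. Because $\{P_0\} \cup s(W)$ consists of eight points in general position on $\Bbb{P}^2$, this line meets $s(W)$ in at most two points, so $|\gamma \cap W| \leq 2$. Proposition~\ref{restriction}(vii) then yields very ampleness of $T$ on $\Bbb{P}(\mathcal{E}_{3|\gamma})$, while Proposition~\ref{restriction}(ix), which is stated precisely for $h=4$ and $y \le 3$, supplies the vanishing $h^1(Y, \mathcal{E}_{3} \otimes \mathcal{O}_Y(-\gamma)) = 0$ needed to lift separating sections from $\Bbb{P}(\mathcal{E}_{3|\gamma})$ to $X$.

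For Case~2, when at least one of $\pi(P), \pi(Q)$ lies on $C_0$, the argument of Theorem~\ref{teoy3} fails because Proposition~\ref{restriction}(v) only guarantees that $T_{|\Bbb{P}(\mathcal{E}_{3|C_0})}$ is either very ample or a birational morphism contracting $C_0$. This is precisely the obstruction that Lemma~\ref{claim} is tailored to remove. I would fix a fibre $\overline{f}$ of $Y$ missing both $\pi(P)$ and $\pi(Q)$, so that $S_1 = \pi^{-1}(\overline{f})$ contains neither $P$ nor $Q$, and work inside the linear system $|T - S_1|$ of projective dimension two. The subsystem through $\{P, Q\}$ has dimension at least zero. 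Using Lemma~\ref{claim}(ii), singular or reducible elements of $|T - S_1|$ sit in a codimension $\ge 2$ linear subsystem, hence in at most a single point; using Lemma~\ref{claim}(iv), very ampleness of $T_{|S}$ holds on a Zariski open subset of smooth surfaces. From these two facts I would extract a smooth $S \in |T - S_1|$ through $\{P, Q\}$ with $T_{|S}$ very ample, and then apply Lemma~\ref{claim}(iii) to get surjectivity of the restriction $|T| \to |T|_{|S}$, lifting separating sections on $S$ to separating sections on $X$.

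The main obstacle is the selection step in Case~2: since $\dim |T - S_1| = 2$ and two independent incidence conditions are imposed, the subsystem of surfaces through $\{P, Q\}$ is typically a single point, which could a priori coincide either with the isolated singular or reducible element allowed by Lemma~\ref{claim}(ii) or with a point of the proper closed subvariety where Lemma~\ref{claim}(iv) fails. Overcoming this requires a careful dimension count showing that, as $\{P, Q\}$ varies over the two-dimensional incidence locus of pairs with at least one point on $\pi^{-1}(C_0)$, the associated unique surface in $|T - S_1|$ cannot systematically land in these exceptional loci; the flexibility comes from the moduli of reducible decompositions of bad elements ruled out by the proof of Lemma~\ref{claim}(ii) and from the explicit Reider-type description of $T_{|S}$ underlying Lemma~\ref{claim}(iv).
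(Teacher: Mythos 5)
Your reduction to reduced pairs $\{P,Q\}$ on distinct fibres is fine, and your Case~1 (neither point over $C_{0}$) goes through exactly as in Theorem~\ref{teoy3}, since Proposition~\ref{restriction}~$vii)$ and $ix)$ do cover $h=4$, $y=3$. The genuine gap is in your Case~2, and you have in fact put your finger on it yourself: with $\dim|T-S_{1}|=2$, imposing passage through $P$ and $Q$ typically leaves a single surface $S$, and nothing prevents that particular $S$ from being one of the finitely many singular/reducible members allowed by Lemma~\ref{claim}~$ii)$, or a smooth member outside the generic locus where Lemma~\ref{claim}~$iv)$ applies (the proof of $iv)$ needs the eight points of the zero locus to be distinct and to impose the base-point-freeness hypothesis of Lemma~\ref{lemReider}, neither of which is available for a \emph{specific} non-generic $S$). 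Your proposed remedy --- a dimension count showing the bad surfaces cannot arise ``systematically'' as $\{P,Q\}$ varies --- cannot close this gap even in principle: very ampleness requires separating \emph{every} length-two scheme, so excluding the bad behaviour only for generic pairs in the incidence locus proves nothing about the exceptional pairs, and those are exactly the ones at issue.

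The paper's proof takes a different route that dissolves this difficulty. It does not case-split on $C_{0}$ at all; for every reduced pair on distinct fibres it applies Proposition~\ref{tre} with $S_{1}=\pi^{-1}(f_{Q})$ (the fibre preimage through $\pi(Q)$, not one avoiding both points) and a very positive $S_{2}\in|T+\pi^{*}A|$ through $P$ and not $Q$, reducing the problem to finding $\sigma_{2}\in|T_{|S_{2}}|$ through $P,R_{1},\dots,R_{4}$ not containing $C=S_{1}\cap S_{2}$. This amounts to the surjectivity of the restriction $\rho_{P}$ of $H^{0}(S_{2},T_{|S_{2}}\otimes\mathcal{I}_{P})\rightarrow H^{0}(C,T_{|C})$, which holds unless $P$ lies in the base locus of $|T-S_{1}|$. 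Lemma~\ref{claim} is then invoked only in the residual sub-case where \emph{both} $P$ and $Q$ are base points of $|T-S_{1}|$ --- and there the genericity in Lemma~\ref{claim}~$i)$, $ii)$, $iv)$ costs nothing, because every member of $|T-S_{1}|$ automatically contains $P$ and $Q$, so one is free to choose $S$ generic. That is the idea your argument is missing: you must arrange matters so that the surface on which you separate can be chosen generically in its linear system, rather than being pinned down by incidence conditions.
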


\begin{proof}
As before, let us write $\mathcal{E}$ instead of $\mathcal{E}_{3}.$ Let $T$
be the tautological divisor of $X=\Bbb{P}(\mathcal{E})$ and let $\xi $ be
any fixed element of $X^{[2]}.$ We have to prove that $|T|$ separates $\xi .$
By Lemmas \ref{lemtg} and \ref{lemparete} we know that we have to consider
only the cases when $\xi $ is a couple of distinct points $P,Q$ projecting
on different fibres of $Y.$

We want to apply Proposition \ref{tre}, so that we will prove that
assumptions $\alpha $) and $\gamma $) of Proposition \ref{tre} are satisfied
with $A\equiv C_{0}+xf$, $x>>0,$ and $B\equiv f.$ It is easy to see that $%
\gamma $) is true for $x>>0,$ so we have to prove that $\alpha $) is true
for any couple of distinct points $P,Q\in X.$ In other words, we have to
show that there exists a section $\tau \in $ $|T|$ such that $(\tau )_{0}$
passes through $P$ and does not pass through $Q.$ Of course we can change
the role of $P$ and $Q$ to separate $Q$ from $P.$

Let us consider $\pi (Q)$ and the fibre $f_{Q}$ of $Y$ passing through $\pi
(Q)$. Let us choose a smooth surface $S_{2}\in $ $|T+\pi ^{*}A|$ passing
through $P$ and not through $Q$ (recall that $T+\pi ^{*}A$ is very ample for
$x>>0$) and a smooth $S_{1}:=\pi ^{-1}(f_{Q});$ so $\alpha $)$i)$ is
fullfilled, moreover we can assume that $S_{2}$ cuts $S_{1}$ transversely
along a smooth rational curve $C$ as in the proof of Lemma \ref{lemtg}.

As $T_{|S_{1}}$ is very ample by Lemma \ref{lemparete} $i),$ we can choose a
section $\sigma _{1}\in |T_{|S_{1}}|$ such that $(\sigma _{1})_{0}$ is
smooth, does not pass through $Q$ and cuts $C$ at $4$ distinct points $%
R_{1},...,R_{4}$ (see the proof of Lemma \ref{lemtg}) so that $\alpha $)$ii)$
is fulfilled.

To get $\alpha $)$iii)$ and $iv)$ we look for a section $\sigma _{2}\in
H^{0}(S_{2},T_{|S_{2}})$ whose zero locus passes through $P,R_{1},...,R_{4}$
and does not contain $C$. By Lemma \ref{lemparete} $iii)$ we know that $%
S_{2} $ is isomorphic to the the blow up of $Y$ at $5x+12$ distinct points,
hence to the blow up of $\Bbb{P}^{2}$ at $5x+13$ distinct points; if we
generate \textrm{Num}$(S_{2})$ with the pull back $l$ of the generator of
\textrm{Pic}$(\Bbb{P}^{2})$, the pull back $l_{0}$ of $C_{0}$ $\in Y$ and
the classes of the exceptional divisors, we have that $T_{|S_{2}}\equiv
(x+5)l-(x+1)l_{0}-l_{1}\dots -l_{5x+12}$, $|T|\simeq |T_{|S_{2}}|$ and $%
h^{0}(S_{2},T_{|S_{2}})$ $=h^{0}(X,T)=h^{0}(Y,\mathcal{E})=8.$

Let us consider the linear subspace $H^{0}(S_{2},T_{|S_{2}}\otimes \mathcal{I%
}_{P})$ of $H^{0}(S_{2},T_{|S_{2}})$ given by sections whose zero locus
contains $P;$ $\dim [H^{0}(S_{2},T_{|S_{2}}\otimes \mathcal{I}_{P})]=7,$
(recall that $|T|\simeq |T|_{|S_{2}}=|T_{|S_{2}}|$ has no base points by
Lemma \ref{lemparete} $ii)$ ). Let us consider the restriction $\rho _{P}$
of the natural map $\rho :H^{0}(S_{2},T_{|S_{2}})\rightarrow
H^{0}(C,T_{|C})\simeq H^{0}(\Bbb{P}^{1},\mathcal{O}_{\Bbb{P}^{1}}(4))\simeq
\Bbb{C}^{5}\;$to $H^{0}(S_{2},T_{|S_{2}}\otimes \mathcal{I}_{P}).$

We claim that $\rho $ is surjective. Indeed, consider first the structure
sequence of $S_{2}$ on $X,$ tensored with $T-S_{1}$:\newline
$0\rightarrow T-S_{1}-S_{2}\rightarrow T-S_{1}\rightarrow
(T-S_{1})_{|S_{2}}\rightarrow 0.$ It is: $h^{1}(X,T-S_{1})=0$ by Proposition
\ref{restriction} $iii)$ and $h^{2}(X,T-S_{1}-S_{2})=h^{2}(X,-\pi
^{*}(A+f_{Q}))=h^{2}(Y,-A-f_{Q})=h^{0}(Y,K_{Y}+A+f_{Q})=0,$ hence $%
h^{1}(S_{2},(T-S_{1})_{|S_{2}})=0.$ \newline
Let us then consider the structure sequence of $C$ on $S_{2}$ tensored with $%
T_{|S_{2}}.$ As $h^{1}(S_{2},(T-S_{1})_{|S_{2}})=h^{1}(S_{2},T_{|S_{2}}-C)=0$
it follows that $\rho $ is surjective, so that our claim is proved, moreover
$\ker (\rho )=H^{0}(S_{2},T_{|S_{2}}-C).$ As $%
3=h^{0}(X,T-S_{1})=h^{0}(S_{2},(T-S_{1})_{|S_{2}})=$ $%
h^{0}(S_{2},T_{|S_{2}}-C),$ it follows that $\dim [\ker (\rho )]=3.$

Now let us consider the following two cases.

\textbf{Case 1:} let us assume that $P$ is in the base locus of $%
|T_{|S_{2}}-C|=|(T-S_{1})_{|S_{2}}|=|T-S_{1}|_{|S_{2}},$ hence in the base
locus of $|T-S_{1}|$ because $|T|\simeq |T|_{|S_{2}}=|T_{|S_{2}}|.$ If $Q$
is not in the base locus of $|T-S_{1}|,$ then there exists an element $%
\widetilde{S}\in |T-S_{1}|=|T-\pi ^{*}f|$ passing through $P$ and not
passing through $Q.$ Let us pick $S_{1}^{\prime }:=\pi ^{-1}(\overline{f}),$
where $\overline{f}$ is a fibre of $Y$ different from $f_{Q}$ and we get an
element $\widetilde{S}\cup $ $S_{1}^{\prime }\in $ $|T-\pi ^{*}f|+|\pi
^{*}f|\subseteq |T|$ separating $P$ from $Q$ without using Proposition \ref
{tre}. If $Q$ is in the base locus of $|T-S_{1}|,$ let us pick a generic
smooth surface $S\in $ $|T-S_{1}|,$ obviously passing through $P$ and $Q,$
and existing by Lemma \ref{claim} $i)$ and $ii)$. Now, by Lemma \ref{claim} $%
iii)$ and $iv)$ we can separate $P$ from $Q$ by $|T|$ directly, without
using Proposition \ref{tre}.

\textbf{Case 2:} let us assume that $P$ is not in the base locus of the
linear system $|T_{|S_{2}}-C|=|(T-S_{1})_{|S_{2}}|=|T-S_{1}|_{|S_{2}}.$ Then
there exists at least an element of $|T_{|S_{2}}-C|$ whose zero-locus does
not pass through $P,$ hence $H^{0}(S_{2},T_{|S_{2}})_{P}\nsupseteqq
H^{0}(S_{2},T_{|S_{2}}-C)=\ker (\rho )$ and $\dim [\ker (\rho _{P})]=\dim
[H^{0}(S_{2},T_{|S_{2}})_{P}\cap H^{0}(S_{2},T_{|S_{2}}-C)]=2,\;$(a priori $%
\dim [\ker (\rho _{P})]=\{2,3\}$). Therefore $\rho _{P}$ is surjective too
and we can choose a section $\sigma _{2}\in $ $|T_{|S_{2}}|$ whose zero
locus passes through $P,$ $R_{1},...,R_{4}$, not containing $C.$ In this
case we can conclude as in the proof of Lemma \ref{lemtg}: as $%
H^{0}(X,T)=H^{0}(S_{2},T_{|S_{2}})$ we get that there exists $\tau \in |T|$
such that $\tau _{|S_{2}}=$ $\sigma _{2}$ and $(\tau )_{0}$ does not contain
$S_{1}$ because $(\sigma _{2})_{0}$ does not contain $C.\;$ Hence $\sigma
_{1}^{^{\prime }}:=\tau _{|S_{1}}$ is a non zero section of $|T_{|S_{1}}|$
and we have $\sigma _{1}^{^{\prime }}\in $ $<\sigma _{1}$ $>$ because $%
\sigma _{1}^{^{\prime }}$ and $\sigma _{1}$ cut the same divisor $%
R_{1}+R_{2}+R_{3}+R_{4}$ on $C$ and $|T_{|S_{1}}|_{|C}\simeq |T_{|S_{1}}|$
by Lemma \ref{lemparete} $i).$ By choosing suitably the generator of $%
<\sigma _{1}$ $>$ we can assume that $\tau _{|S_{1}}=\sigma _{1}$ so that $%
\alpha $)$iii$) and $\alpha $)$iv$) are fulfilled a fortiori by choosing $%
\sigma :=\tau _{|S_{!}\cup S_{2}}.$
\end{proof}

\section{Existence and non existence of some $3$-folds}

The study of linearly normal projective manifolds of low degree got a boost
as a result of classical adjunction theory, as developed by Sommese and his
collaborators. The approach consists of three phases: enumeration of all
possible manifolds of given degree according to their adjunction theoretic
structure and values of numerical characters; investigation of actual
effective existence of elements appearing in the compiled lists; study of
the Hilbert scheme of existing manifolds (see \cite{bb2} for details). In
\cite{fl1}, \cite{fl2}, \cite{bb1}, such a study is conducted for degree,
respectively, $9,10,11.$ In all three papers the existence of members of a
particular family of $3$-fold scrolls was left as an open problem. They are
scrolls of the form $X:=\Bbb{P}(\mathcal{E}_{y}),$ of degree $[c_{1}(%
\mathcal{E}_{y})]^{2}-c_{2}(\mathcal{E}_{y})=13-y,$ where $\mathcal{E}_{y}$
is a rank $2$ vector bundle over $Y=\mathbf{F}_{1},$ having $c_{1}(\mathcal{E%
}_{y})\equiv 3C_{0}+5f$ and $c_{2}(\mathcal{E}_{y})=8+y,$ with $y=2,3,4.$

The analysis conducted in Section 6 gives immediately the following::

\begin{corollary}
\label{finalsi} There exist linearly normal $3$-folds $\Bbb{P}(\mathcal{E}%
_{y})$, $y=3,2$, where $\mathcal{E}_{y}$ is a rank two vector bundle given
by $(*),$ embedded as linear scrolls over $\mathbf{F}_{1}$, with $c_{1}(%
\mathcal{E}_{y})\equiv 3C_{0}+5f$, $c_{2}(\mathcal{E}_{y})=8+y$ and degree,
respectively, $10$ and $11$.
\end{corollary}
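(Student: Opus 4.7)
The plan is to derive Corollary \ref{finalsi} as an immediate consequence of Theorem \ref{teoy3} and Theorem \ref{teoy4}, since the substantive work has been carried out in Section 6. The corollary is essentially a bookkeeping matching of parameters.

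First I would pick the correct parameter $h$ for each value of $y$: taking $h=3$ when $y=2$ gives a construction of the bundle $\mathcal{E}_{2}$ via sequence $(*)$ over a generic set $W$ of $w=h+y=5$ points in general position on $\mathbf{F}_{1}$ in the sense of Definition \ref{defposiz}, and Theorem \ref{teoy3} immediately yields very ampleness of $\mathcal{E}_{2}$. Similarly, $h=4$ paired with $y=3$ produces $w=7$ generic points on $\mathbf{F}_{1}$, and Theorem \ref{teoy4} supplies very ampleness of $\mathcal{E}_{3}$. In both cases the general position hypothesis on $W$ is satisfied by a generic choice, and the Griffiths--Harris-type construction summarized just before Proposition \ref{restriction} applies since $|K_{Y}+M-L|=\emptyset$.

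Next I would read off the numerical invariants directly from $(*)$: one has $c_{1}(\mathcal{E}_{y})=L+M\equiv 3C_{0}+5f$ independently of $h$, while $c_{2}(\mathcal{E}_{y})=L\cdot M+w=(8-h)+(h+y)=8+y$ using $C_{0}^{2}=-1$, $C_{0}\cdot f=1$, $f^{2}=0$ on $\mathbf{F}_{1}$. Consequently, setting $X=\Bbb{P}(\mathcal{E}_{y})$ embedded by $|T|$, one computes $\deg X=[c_{1}(\mathcal{E}_{y})]^{2}-c_{2}(\mathcal{E}_{y})=21-(8+y)=13-y$, giving degree $11$ when $y=2$ and degree $10$ when $y=3$, as required.

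Finally, linear normality is automatic: we embed $X$ by the \emph{complete} linear system $|T|$, so the restriction map $H^{0}(\Bbb{P}^{N},\mathcal{O}(1))\to H^{0}(X,T)$ is an isomorphism, where by the projection formula $N=h^{0}(Y,\mathcal{E}_{y})-1$. There is no genuine obstacle here; the hard analytic content is already concentrated in Theorems \ref{teoy3} and \ref{teoy4}, and what remains is purely a matter of identifying the parameter $h$ and doing the Chern class arithmetic.
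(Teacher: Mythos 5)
Your proposal is correct and follows essentially the same route as the paper: the corollary is deduced by applying Theorem \ref{teoy3} (with $h=3$, $y=2$) and Theorem \ref{teoy4} (with $h=4$, $y=3$) and recalling that the bundles defined by $(*)$ have $c_{1}(\mathcal{E}_{y})\equiv 3C_{0}+5f$ and $c_{2}(\mathcal{E}_{y})=8+y$, hence degree $13-y$. Your explicit verification of the Chern class arithmetic and of linear normality via the completeness of $|T|$ is just a fuller writing-out of the paper's one-line proof.
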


\begin{proof}
Apply Theorem \ref{teoy3} and Theorem \ref{teoy4}, by recalling that vector
bundles $\mathcal{E}_{y}$ defined by $(*)$ have the prescribed Chern classes.
\end{proof}

In fact Theorem. \ref{teoy3} proves the existence of other $3$-folds of the
same type. It is natural to ask if rank $2$ very ample vector bundles over $%
Y $ can be defined by using $(*)$ for other values of $y.$ The answer is
negative for $y=4,$ $h=4.$ In this case it is possible to prove that there
is a smooth surface $S\in |T-\pi ^{*}D|,$ with $D\equiv C_{0}+f,$ such that $%
S$ is isomorphic to the blow up of $\Bbb{P}^{2}$ at $9$ distinct points in
general position and $T_{|S}\simeq 4l-2l_{0}-l_{1}...-l_{8}$ (with the usual
notation). If we consider an existing smooth plane cubic curve passing
through $P_{0},...,P_{8}$ we have that this curve gives rise to a smooth
elliptic curve $C$ on $S$ such that $T_{|C}$ is not very ample ($\deg
(T_{|S}C)$ $=\deg (T_{|C})=2$), so that $T$ can not be very ample. In the
same way it is possible to show that the same approach is not successful for
$y=4,$ $h=3$ and also for $y=4$ and $h=5.$

On the other hand it is easy to prove that there exists a very ample rank $2$
vector bundles $\mathcal{E}_{-3}$ over $Y$ having $c_{1}(\mathcal{E}%
_{-3})\equiv 3C_{0}+5f$ and $c_{2}(\mathcal{E}_{-3})=5:$ indeed one can
simply take $\mathcal{E}_{-3}$ to be the direct sum of the two very ample
line bundles $L\equiv C_{0}+2f$ and $M\equiv 2C_{0}+3f$.

The following necessary condition for the very ampleness of rank $2$ vector
bundles $\mathcal{E},$ over any $\mathbf{F}_{e}$, having $c_{1}(\mathcal{E}%
)\equiv 3C_{0}+tf$ for some integer $t,$ can be established.

\begin{proposition}
\label{Brosius}Let $\mathcal{E}$ be a very ample rank $2$ vector bundle over
$\mathbf{F}_{e},$ such that $c_{1}(\mathcal{E})\equiv 3C_{0}+tf$ and $c_{2}(%
\mathcal{E})=k.$ Then: $h^{0}(\mathbf{F}_{e},\mathcal{E})\geq 7,$ $t\geq
3e+1 $, $k+e>t$ and there exists an exact sequence $0\rightarrow
L\rightarrow \mathcal{E}\rightarrow M\rightarrow 0$ where $L$ and $M$ are
line bundles such that $L\equiv 2C_{0}+(2t-2e-k)f$, and $M\equiv
C_{0}+(k-t+2e)f.$
\end{proposition}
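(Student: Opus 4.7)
The plan is to extract from the very ampleness of $\mathcal{E}$ the announced exact sequence and then read off the three numerical bounds from the positivity of the line bundles appearing in it.

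First I would examine the restriction of $\mathcal{E}$ to the rulings of $\rho$. Very ampleness of $T$ on $X=\Bbb{P}(\mathcal{E})$ forces $T_{|\pi^{-1}(f)}$ to be very ample for every fibre, so $\mathcal{E}_{|f}$ is a very ample rank $2$ bundle of degree $c_{1}(\mathcal{E})\cdot f=3$ on $f\simeq\Bbb{P}^{1}$. The only possibility is $\mathcal{E}_{|f}\simeq\mathcal{O}_{\Bbb{P}^{1}}(1)\oplus\mathcal{O}_{\Bbb{P}^{1}}(2)$ on \emph{every} fibre, so the splitting type is uniform and there is no jumping locus. The pointwise $\mathcal{O}(2)$-subbundles then glue to a line subbundle $L\subset\mathcal{E}$ whose cokernel $M=\mathcal{E}/L$ is locally free, yielding $0\to L\to\mathcal{E}\to M\to 0$. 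Intersecting with a fibre gives $L\cdot f=2$, $M\cdot f=1$, so $L\equiv 2C_{0}+bf$ and $M\equiv C_{0}+(t-b)f$. Since the quotient is a line bundle, $k=c_{2}(\mathcal{E})=L\cdot M=-2e+2t-b$, forcing $b=2t-2e-k$ and the announced classes.

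Second, I would derive $k+e>t$ and $t\geq 3e+1$ from restrictions of $T$. The surjection $\mathcal{E}\twoheadrightarrow M$ produces a section $s\colon\mathbf{F}_{e}\to X$ with $s^{*}T=M$, which must be very ample on $\mathbf{F}_{e}$ since $T$ is. By the standard very-ampleness criterion on $\mathbf{F}_{e}$ (cf.\ \cite{h}, Chapter V), $C_{0}+mf$ is very ample iff $m>e$, so $k-t+2e>e$, i.e.\ $k+e>t$. Similarly, $T_{|\pi^{-1}(C_{0})}$ is very ample, making $\mathcal{E}_{|C_{0}}$ very ample on $C_{0}\simeq\Bbb{P}^{1}$; writing $\mathcal{E}_{|C_{0}}\simeq\mathcal{O}(\alpha)\oplus\mathcal{O}(\beta)$ with $\alpha\leq\beta$, very ampleness on $\Bbb{P}^{1}$ forces $\alpha\geq 1$, so $t-3e=\alpha+\beta\geq 2$, which yields $t\geq 3e+1$. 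For $h^{0}(\mathbf{F}_{e},\mathcal{E})\geq 7$, I would use the cohomology of the exact sequence together with Serre duality. Since $c_{1}(\mathcal{E}^{\vee}\otimes K_{\mathbf{F}_{e}})\cdot f=-7$, the sheaf $\mathcal{E}^{\vee}\otimes K_{\mathbf{F}_{e}}$ has negative degree on every fibre of $\rho$, so $h^{2}(\mathbf{F}_{e},\mathcal{E})=0$. Explicit computation of $h^{0}(L)$ and $h^{0}(M)$ via the decomposition of $\rho_{*}\mathcal{O}(aC_{0}+bf)$ as a direct sum of line bundles on $\Bbb{P}^{1}$, combined with Riemann--Roch $\chi(\mathcal{E})=5-6e+4t-k$ and the inequalities already established, delivers the lower bound $7$.

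The main obstacle, in my view, is the very first step: showing that the fibrewise $\mathcal{O}(2)$-subbundles glue to a globally defined \emph{saturated} line subbundle $L$ with \emph{locally free} quotient, as opposed to one of the form $M\otimes\mathcal{I}_{W}$ with nontrivial $W$. This is a Brosius-type relative classification, and it relies crucially on the uniformity of the splitting type across all fibres (itself a consequence of fibrewise very ampleness, which rules out any jumping). Once the clean sequence is secured, the remaining assertions reduce to standard intersection-theoretic and cohomological calculations on $\mathbf{F}_{e}$.
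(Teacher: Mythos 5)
Your construction of the exact sequence and your derivations of $t\geq 3e+1$ and $k+e>t$ track the paper's proof closely. The paper also restricts to the fibres of $\rho $, finds the uniform splitting type $\mathcal{O}_{\Bbb{P}^{1}}(1)\oplus \mathcal{O}_{\Bbb{P}^{1}}(2)$, and invokes Brosius's theorem (\cite{b}, Theorem 1) to produce $0\rightarrow L\rightarrow \mathcal{E}\rightarrow M\rightarrow 0$, observing exactly as you do that the zero-dimensional scheme in Brosius's sequence is empty because uniformity of the splitting type rules out jumping lines; the numerical classes of $L$ and $M$ then follow from $L+M\equiv c_{1}(\mathcal{E})$ and $LM=k$ as in your computation. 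For $k+e>t$ the paper uses ampleness of the quotient $M$ rather than very ampleness of $s^{*}T$, and for $t\geq 3e+1$ it uses ampleness of $c_{1}(\mathcal{E})$ rather than restriction to $C_{0}$ (your argument in fact yields the slightly stronger $t\geq 3e+2$); these are cosmetic differences.

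The genuine gap is in $h^{0}(\mathbf{F}_{e},\mathcal{E})\geq 7.$ Riemann--Roch together with $h^{2}(\mathbf{F}_{e},\mathcal{E})=0$ gives only $h^{0}(\mathcal{E})\geq \chi (\mathcal{E})=4t-6e-k+5,$ and the inequalities you have established do not bound $k$ from above strongly enough to force this to be $\geq 7$: they give $k>t-e$ (a lower bound) and, from positivity of the degree $c_{1}^{2}-c_{2},$ only $k<6t-9e,$ which leaves $\chi (\mathcal{E})$ free to drop below $7$ (for instance $e=0,$ $t=2,$ $k=7$ gives $\chi =6$). The filtration does not rescue this: since the map $H^{0}(\mathcal{E})\rightarrow H^{0}(M)$ need not be surjective, the best general lower bound is $h^{0}(\mathcal{E})\geq h^{0}(M)+h^{0}(L)-h^{1}(L)=h^{0}(M)+\chi (L),$ which is numerically the same estimate, and $\lambda =2t-2e-k$ may well be negative with $h^{1}(L)$ correspondingly large. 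The paper obtains the bound from an entirely different source: $X=\Bbb{P}(\mathcal{E})$ embedded by $|T|$ is a three-dimensional scroll over $\mathbf{F}_{e}$ in $\Bbb{P}^{h^{0}-1},$ and by Ottaviani's classification \cite{o} no such scroll exists in $\Bbb{P}^{r}$ for $r\leq 5,$ whence $h^{0}\geq 7.$ Some input of this kind (a classification or Castelnuovo-type degree bound for low-codimension scrolls), rather than cohomological bookkeeping on $\mathbf{F}_{e},$ is what is actually needed at this step.
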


\begin{proof}
As $\mathcal{E}$ is very ample, $(\Bbb{P}(\mathcal{E}),{}\mathcal{O}_{\Bbb{P}%
(\mathcal{E})}(1))$ is a scroll over $\mathbf{F}_{e}$ and it is known that
there are no such scrolls in $\Bbb{P}^{r}$ when $r\leq 5,$ see \cite{o}.
Moreover, if $\mathcal{E}$ is very ample then $c_{1}(\mathcal{E})$ is ample,
so that $t\geq 3e+1,$ see \cite[Corollary 2.18]{h}. Let $\rho :\mathbf{F}%
_{e}\to \Bbb{P}^{1}$ be the usual natural projection. The restriction ${%
\mathcal{E}}_{\mid _{f}}$ to any fibre $f$ of $\rho $ must also be very
ample. But $\mathcal{E}_{|f}=\mathcal{O}_{\Bbb{P}^{1}}(a)\oplus \mathcal{O}_{%
\Bbb{P}^{1}}(b)$ with $a+b=3$ as $c_{1}(\mathcal{E})f=3,$ therefore the only
possibility is $a=2,b=1$ for any fibre. By \cite[p. 155]{b}, Theorem 1,
there exists an exact sequence $0\rightarrow L\rightarrow \mathcal{E}
\rightarrow M\rightarrow 0$ such that $L+M\equiv c_{1}(\mathcal{E}),LM=c_{2}(%
\mathcal{E})=k$ and $L=\rho ^{*}[\rho _{*}(\mathcal{E}\otimes \mathcal{O}_{%
\mathbf{F}_{e}}(-2C_{0})]\otimes \mathcal{O}_{\mathbf{F}_{e}}(2C_{0})$. In
fact the zero-dimensional scheme $Z$ which is involved in the exact sequence
of \cite{b} in this case is empty. Indeed $\rho (\mathrm{Supp}(Z))$ would
coincide with the projection of the jumping lines for $\mathcal{E}$, but in
this case $\mathcal{E}$ is uniform on the ruling. It follows that $L\equiv
2C_{0}+\lambda f$ for some integer $\lambda $, and $M\equiv C_{0}+(t-\lambda
)f.$ As $LM=c_{2}(\mathcal{E})=k,$ it is $\lambda =2t-2e-k.$ The very
ampleness of $\mathcal{E}$ implies that $M$ is ample, hence $t-(2t-2e-k)>e,$
i.e. $k+e>t.$
\end{proof}

Proposition \ref{Brosius} shows that, if a vector bundle over $Y$ as $%
\mathcal{E}_{4}$ is very ample, then it is the extension of two line
bundles. However this fact does not help to prove the very ampleness of a
rank $2$ vector bundle by our techniques.

We can also establish a non-existence result that settles one more existence
question left open in \cite{bb1}.

\begin{corollary}
\label{finalno} There does not exist any linearly normal $3$-fold $X=\Bbb{P}(%
\mathcal{E}),$ embedded in $\Bbb{P}^{7}$ as linear scroll over $Y=$ $\mathbf{%
F}_{1}$, with degree $11$ and sectional genus $5$.
\end{corollary}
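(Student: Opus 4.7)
The plan is to argue by contradiction: I will show that any such $\mathcal{E}$ must satisfy $h^{0}(Y,\mathcal{E})\geq 9$, violating the linear normality hypothesis (which forces $h^{0}(Y,\mathcal{E})=h^{0}(X,T)=8$).

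The first step is a numerical classification. Setting $c_{1}(\mathcal{E})\equiv aC_{0}+bf$ and $c_{2}(\mathcal{E})=k$ on $Y=\mathbf{F}_{1}$, the degree condition $c_{1}^{2}-c_{2}=11$ and the standard formula $2g-2=(K_{Y}+c_{1}(\mathcal{E}))\cdot c_{1}(\mathcal{E})=8$ (using $K_{Y}\equiv -2C_{0}-3f$) translate into $k=a+2b-3$ and $2b(a-1)=a^{2}+a+8$. Very ampleness of $\mathcal{E}$ implies that $c_{1}(\mathcal{E})$ is ample ($a\geq 1$, $b>a$) and that $\mathcal{E}_{|f}$ is very ample on $\mathbb{P}^{1}$ for every fibre $f$ of $\rho:Y\to\mathbb{P}^{1}$ (so $a\geq 2$). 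A short case check in $a$ then leaves only $(a,b,k)=(2,7,13)$ and $(a,b,k)=(3,5,10)$, and in each case the rigidity of very ampleness on $\mathbb{P}^{1}$ forces the splitting type of $\mathcal{E}_{|f}$ to be $\mathcal{O}_{\mathbb{P}^{1}}(1)^{\oplus 2}$ and $\mathcal{O}_{\mathbb{P}^{1}}(1)\oplus\mathcal{O}_{\mathbb{P}^{1}}(2)$ respectively, on \emph{every} fibre.

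The second step is cohomological. Hirzebruch--Riemann--Roch for a rank two bundle on a surface,
\[
\chi(\mathcal{E})=\tfrac{1}{2}c_{1}(\mathcal{E})\cdot(c_{1}(\mathcal{E})-K_{Y})-c_{2}(\mathcal{E})+2\chi(\mathcal{O}_{Y}),
\]
together with $\chi(\mathcal{O}_{\mathbf{F}_{1}})=1$, gives $\chi(\mathcal{E})=9$ in both allowed cases. Next, I would show $h^{2}(Y,\mathcal{E})=0$: by Serre duality and the rank--two identity $\mathcal{E}^{\vee}\cong\mathcal{E}\otimes\mathcal{O}_{Y}(-c_{1}(\mathcal{E}))$, one has $h^{2}(Y,\mathcal{E})=h^{0}(Y,\mathcal{E}\otimes\mathcal{O}_{Y}(K_{Y}-c_{1}(\mathcal{E})))$. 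Since $(K_{Y}-c_{1}(\mathcal{E}))\cdot f=-(a+2)$, restricting to any fibre $f$ twists $\mathcal{E}_{|f}$ by $\mathcal{O}_{\mathbb{P}^{1}}(-(a+2))$; given the rigid splitting types above this leaves only negative--degree summands, so $\rho_{\ast}(\mathcal{E}\otimes\mathcal{O}_{Y}(K_{Y}-c_{1}(\mathcal{E})))=0$ and hence $h^{2}(\mathcal{E})=0$. Combining, $h^{0}(\mathcal{E})=\chi(\mathcal{E})+h^{1}(\mathcal{E})\geq 9>8$, the required contradiction.

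The main obstacle is the rigidity of the fibrewise splitting of $\mathcal{E}$: for a general rank two bundle on $\mathbf{F}_{1}$ one has to worry about jumping fibres with unexpected splitting types, but here the very ampleness hypothesis upgrades to very ampleness on \emph{every} surface $\mathbb{P}(\mathcal{E}_{|f})\subset X$, and the low total degrees $a\in\{2,3\}$ then rule out any non--positive summand. Once this rigidity is secured the pushforward vanishing and the Euler characteristic bookkeeping are routine, and the contradiction with $h^{0}(\mathcal{E})=8$ is immediate.
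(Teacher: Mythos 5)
Your proof is correct, and it takes a genuinely different route from the paper's. The paper argues by passing to an auxiliary surface: it fixes $A\equiv C_{0}+xf$ with $x\gg 0$, takes a smooth $S_{2}\in |T+\pi ^{*}A|$, which (by its Lemma \ref{lemparete}) is the blow up of $\Bbb{P}^{2}$ at $5x+12$ points with $T_{|S_{2}}\equiv (x+5)l-(x+1)l_{0}-l_{1}-\dots -l_{5x+11}$ and $H^{0}(X,T)\simeq H^{0}(S_{2},T_{|S_{2}})$, and then gets $h^{0}(S_{2},T_{|S_{2}})\geq 9$ by counting conditions imposed on plane curves of degree $x+5$ by a point of multiplicity $x+1$ and $5x+11$ simple points. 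You instead work directly on $Y$: Riemann--Roch gives $\chi (\mathcal{E})=9$, and $h^{2}(Y,\mathcal{E})=0$ follows from Serre duality, the rank-two identity $\mathcal{E}^{\vee }\simeq \mathcal{E}\otimes (\det \mathcal{E})^{-1}$, and the fibrewise splitting type forced by very ampleness, so $h^{0}(Y,\mathcal{E})\geq 9$. The two arguments are cousins (both ultimately say the expected dimension of $|T|$ is $9$), but yours is more elementary --- it needs no very ample twist, no Lemma \ref{jet}, and no auxiliary surface --- and it is also more complete: the paper's computation tacitly assumes $c_{1}(\mathcal{E})\equiv 3C_{0}+5f$, $c_{2}(\mathcal{E})=10$ (the family of \cite{bb1}), whereas your preliminary classification shows that degree $11$ and sectional genus $5$ also allow $c_{1}(\mathcal{E})\equiv 2C_{0}+7f$, $c_{2}(\mathcal{E})=13$, and your Euler-characteristic argument disposes of both cases at once (indeed $\chi (\mathcal{E})=9$ is forced by the two numerical constraints alone). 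The only cosmetic point: linear normality in $\Bbb{P}^{7}$ strictly gives $h^{0}(X,T)\leq 8$ (with equality under nondegeneracy), but either bound contradicts $h^{0}\geq 9$, so nothing is lost.
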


\begin{proof}
By contradiction, let us assume that $X$ exists in $\Bbb{P}^{7}$, hence $%
h^{0}(X,T)=8$ because $X$ is supposed to be linearly normal. As in the proof
of Lemma \ref{lemparete} $iii),$ let us consider a divisor $A\equiv C_{0}+xf$
on $Y$ with $x>>0$ and a smooth surface $S_{2}$ such that $S_{2}$ is
isomorphic to the blow up of $\Bbb{P}^{2}$ at $5x+12$ distinct points and $%
H^{0}(X,T)=H^{0}(S_{2},T_{|S_{2}}).$ Recall that we are assuming that $%
\mathcal{E}$ is very ample so that $\mathcal{E}\otimes A$ is very ample too;
note that here the position of the points on $\Bbb{P}^{2}$ it is not
important. As $T_{|S_{2}}\equiv (x+5)l-(x+1)l_{0}-l_{1}\dots -l_{5x+11}$ we
have that $h^{0}(S_{2},T_{|S_{2}})\geq 9:$ contradiction !
\end{proof}

\end{document}